\theoremstyle{plain}
\newtheorem {lemma}{Lemma}
\newtheorem {proposition}[lemma]{Proposition}
\newtheorem {theorem}[lemma]{Theorem}
\newtheorem {corollary}[lemma]{Corollary}
\theoremstyle{definition}
\newtheorem {definition}[lemma]{Definition}
\newtheorem {remark}[lemma]{Remark}
\newtheorem {example}[lemma]{Example}
\newtheorem {question}[lemma]{Question}
\newcommand{\N}{\mathbb{N}}
\newcommand{\C}{\mathcal{C}}
\newcommand{\End}{\operatorname{End}}
\newcommand{\Ann}{\operatorname{ann}}
\newcommand{\id}{\operatorname{id}}
\newcommand{\Walk}{\operatorname{Walk}}
\newcommand{\Ob}{\operatorname{Ob}}
\newcommand{\ob}{\operatorname{ob}}
\newcommand{\G}{\mathcal{G}}
\newcommand{\KP}{\operatorname{KP}}
\DeclareMathOperator{\RG}{\mathbf{RG}}
\DeclareMathOperator{\Modd}{\mathbf{Mod}}
\begin{document}

\title[Simple modules for Kumjian-Pask algebras]{Simple modules for Kumjian-Pask algebras}

\author{Raimund Preusser}
\address{Chebyshev Laboratory, St. Petersburg State University, Russia} \email{raimund.preusser@gmx.de}

\subjclass[2020]{16G99}
\keywords{Kumjian-Pask algebra, Higher-rank graph, Simple module} 
\thanks{The work is supported by the Russian Science Foundation grant 19-71-30002}
\maketitle

\begin{abstract}
The paper introduces the notion of a representation $k$-graph $(\Delta,\alpha)$ for a given $k$-graph $\Lambda$. It is shown that any representation $k$-graph for $\Lambda$ yields a module for the Kumjian-Pask algebra $\KP(\Lambda)$, and the representation $k$-graphs yielding simple modules are characterised. Moreover, the category $\RG(\Lambda)$ of representation $k$-graphs for $\Lambda$ is investigated using the covering theory of higher-rank graphs.

\end{abstract}

%\tableofcontents

\section{Introduction}
In a series of papers~\cite{vitt56,vitt57,vitt62}, William Leavitt studied algebras that are now denoted by $L(n,n+k)$ and have been coined Leavitt algebras. Leavitt path algebras $L(E)$, introduced in~\cite{AA,AMP}, are algebras associated to directed graphs $E$. For the graph $E$ with one vertex and $k+1$ loops, one recovers the Leavitt algebra $L(1,k+1)$. The Leavitt path algebras turned out to be a very rich and interesting class of algebras, whose studies so far constitute over 150 research papers. A comprehensive treatment of the subject can be found in the book~\cite{AASbook}. 

There have been a substantial number of papers devoted to (simple) modules over Leavitt path algebras. Ara and Brustenga~\cite{AB1,AB} studied their finitely presented modules, proving that the category of finitely presented modules over a Leavitt path algebra $L(E)$ is equivalent to a quotient category of the corresponding category of modules over the path algebra $KE$. A similar statement for graded modules over a Leavitt path algebra was established by Paul Smith~\cite{smith2}. %, allowing for making a bridge to study finite-dimensional algebras with radical square zero.
Gon\c{c}alves and Royer~\cite{GR} obtained modules for Leavitt path algebras by introducing the notion of a branching system for a graph. Chen~\cite{C} used infinite paths in $E$ to obtain simple modules for the Leavitt path algebra $L(E)$. Numerous work followed, noteworthy the work of Ara-Rangaswamy and Rangaswamy~\cite{ARa,R-2,R-3} producing new simple modules associated to infinite emitters and characterising those algebras which have countably (finitely) many distinct isomorphism classes of simple modules. Abrams, Mantese and Tonolo~\cite{AMT} studied the projective resolutions for these simple modules. The recent work of \'Anh and Nam \cite{nam} provides another way to describe the so-called Chen and Rangaswamy simple modules. 

The Leavitt algebras $L(n,n+k)$ where $n>1$ can not be obtained via Leavitt path algebras. For this reason weighted Leavitt path algebras were introduced in~\cite{H-1}. For the weighted graph with one vertex and $n+k$ loops of weight $n$ one recovers the Leavitt algebra $L(n,n+k)$. If all the weights are $1$, then the weighted Leavitt path algebras reduce to the usual Leavitt path algebras. In a recent preprint \cite{HPS} the authors obtained modules for weighted Leavitt path algebras by introducing the notion of a representation graph for a weighted graph. They proved that each connected component $C$ of the category $\RG(E)$ of representation graphs for a weighted graph $E$ contains a universal object $T_C$, yielding an indecomposable $L_K(E)$-module $V_{T_C}$, and a unique object $S_C$ yielding a simple $L_K(E)$-module $V_{S_C}$. It was also shown that specialising to unweighted graphs, one recovers the simple modules of the usual Leavitt path algebras constructed by Chen via infinite paths.

Kumjian-Pask algebras $\KP(\Lambda)$, which are algebras associated to higher-rank graphs $\Lambda$, were introduced by Aranda Pino, Clark, an Huef and Raeburn \cite{pchr} and generalise the Leavitt path algebras. The definition was inspired by the higher-rank graph $C^*$-algebras introduced by Kumjian and Pask \cite{kp2000}. In \cite{pchr} the authors obtained modules for the Kumjian-Pask algebras using infinite paths and provided a necessary and sufficient criterion for the faithfulness of these modules. Kashoul-Radjabzadeh, Larki and Aminpour~\cite{KRLA} characterised primitive Kumjian-Pask algebras in graph-theoretic terms.

In the present paper we apply ideas from \cite{HPS} in order to obtain modules for Kumjian-Pask algebras. We introduce the notion of a representation $k$-graph $(\Delta,\alpha)$ for a given $k$-graph $\Lambda$.  We show that any representation $k$-graph $(\Delta,\alpha)$ for $\Lambda$ yields a module $V_{(\Delta,\alpha)}$ for the Kumjian-Pask algebra $\KP(\Lambda)$ and characterise the representation $k$-graphs yielding simple modules. Moreover, we investigate the category $\RG(\Lambda)$ of representation $k$-graphs for $\Lambda$ using the covering theory of higher-rank graphs developed in \cite{PQR}.

In Section 2 we recall some of the definitions and results of \cite{PQR}. In Section 3 we introduce the main notion of this paper, namely the notion of a representation $k$-graph. We show that each connected component $C$ of the category $\RG(\Lambda)$ contains objects $(\Omega_C,\zeta_C)$ and $(\Gamma_C,\xi_C)$ such that each object of $C$ is a quotient of $(\Omega_C,\zeta_C)$ and a covering of $(\Gamma_C,\xi_C)$. In Section 4 we recall the definition of a Kumjian-Pask algebra and define the $\KP(\Lambda)$-module $V_{(\Delta,\alpha)}$ associated to a representation $k$-graph $(\Delta,\alpha)$ for $\Lambda$. We show that (up to isomorphism) the representation $k$-graphs $(\Gamma_C,\xi_C)$ are precisely those representation $k$-graphs for $\Lambda$ that yield simple $\KP(\Lambda)$-modules. Moreover, we prove that $V_{(\Gamma_C,\xi_C)}\not\cong V_{(\Gamma_D,\xi_D)}$ if the connected components $C$ and $D$ of $\RG(\Lambda)$ are distinct. In Section 5 we obtain a necessary and sufficient criterion for the indecomposability of the modules $V_{(\Omega_C,\zeta_C)}$. We conclude that the modules $V_{(\Omega_C,\zeta_C)}$ are indecomposable if $k=1$. Section 6 contains a couple of examples.

Throughout the paper $K$ denotes a field and $K^{\times}$ the set of all nonzero elements of $K$. By a $K$-algebra we mean an associative (but not necessarily commutative or unital) $K$-algebra. The set of all nonnegative integers is denoted by $\mathbb N$.

\section{Coverings of higher-rank graphs}
\subsection{$k$-graphs}
For a positive integer $k$, we view the additive monoid $\mathbb{N}^k$ as a category with one object. A {\it $k$-graph} is a small category $\Lambda = (\Lambda^{\ob},\Lambda, r, s)$ together with a functor $d : \Lambda\rightarrow \mathbb{N}^k$, called the {\it degree map}, satisfying the following {\it factorisation property}:
if $\lambda\in \Lambda$ and $d(\lambda) = m+n$ for some $m, n\in \N^k$, then there are unique $\mu, \nu \in\Lambda$ such that $d(\mu) = m$, $d(\nu) = n$ and $\lambda= \mu\circ \nu$. An element $v\in\Lambda^{\ob}$ is called a {\it vertex} and an element $\lambda\in\Lambda$ a {\it path of degree $d(\lambda)$ from $s(\lambda)$ to $r(\lambda)$}.

For $u, v\in \Lambda^{\ob}$ we set $u\Lambda := r^{-1}(u)$, $\Lambda v := s^{-1}(v)$ and $u\Lambda v = u\Lambda \cap \Lambda v$. For $u, v\in \Lambda^{\ob}$ and $n\in \N^k$ we set $\Lambda^n:= d
^{-1}(n)$, $u\Lambda^n := u\Lambda \cap \Lambda^n$ and $\Lambda^nv := \Lambda v \cap \Lambda^n$. A {\it morphism} between $k$-graphs is a degree-preserving functor.

A $k$-graph $\Lambda$ is called {\it nonempty} if $\Lambda^{\ob}\neq \emptyset$, and {\it connected} if
the equivalence relation on $\Lambda^{\ob}$ generated by $\{(u, v) | u\Lambda v\neq \emptyset \}$ is $\Lambda^{\ob} \times \Lambda^{\ob}$.  $\Lambda$ is called {\it row-finite} if $v\Lambda^n$ is finite for any $v\in \Lambda^{\ob}$ and $n\in \mathbb{N}^k$. $\Lambda$ {\it has
no sources} if $v\Lambda^n$ is nonempty for any $v\in \Lambda^{\ob}$ and $n\in \mathbb{N}^k$. In this paper all $k$-graphs are assumed to be nonemtpy, connected, row-finite and to have no sources.

\subsection{The fundamental groupoid of a $k$-graph}

Recall that a {\it (directed) graph} $E$ is a tuple $(E^{0}, E^{1}, r,s)$, where $E^{0}$ and $E^{1}$ are sets and $r,s$ are maps from $E^1$ to $E^{0}$. We may think of each $e \in E^1$ as an edge pointing from the vertex $s(e)$ to the vertex $r(e)$. A {\it path} $p$ in a graph $E$ is a finite sequence $p=e_{n} \cdots e_{2}e_{1}$ of edges $e_{i}$ in $E$ such that $s(e_{i})=r(e_{i-1})$ for $2\leq i\leq n$. We define $r(p) =r(e_{n})$ and $s(p) = s(e_{1})$. The paths $p=e_{l}e_{l-1} \cdots e_{m+1}e_{m}$ where $1\leq m\leq l\leq n$ are called {\it subpaths} of $p$.

Let $\mathcal{C}$ be a category. The {\it underlying graph} of $\mathcal{C}$ is the graph $E(\mathcal{C})$ whose vertices are the objects of $\mathcal{C}$ and whose edges are the morphisms of $\C$ (the source and the range map are defined in the obvious way). Let $E(\mathcal{C})_d$ be the graph obtained from $E(\mathcal{C})$ by adding for any edge $e$ which is not an identity morphism in $\mathcal{C}$, an edge $e^*$ with reversed direction. A {\it walk} in $\mathcal{C}$ is a path $p$ in the graph $E(\mathcal{C})_d$. We denote by $\Walk(\mathcal{C})$ the set of all walks in $\mathcal{C}$. Moreover, we denote by $\Walk_u(\mathcal{C})$ the set of all walks starting in $u$, by ${}_v\!\Walk(\mathcal{C})$ the set of all walks ending in $v$ and by ${}_v\!\Walk_u(\mathcal{C})$ the intersection of ${}_v\!\Walk(\mathcal{C})$ and $\Walk_u(\mathcal{C})$. If $F:\mathcal{C}\to \mathcal{D}$ is a functor, then $F$ induces a map $\Walk(\mathcal{C})\to \Walk(\mathcal{D})$, which we also denote by $F$.

Recall that a {\it groupoid} is a small category in which any morphism has an inverse. The {\it fundamental groupoid} $\G(\Lambda)$ of a $k$-graph $\Lambda$ can be constructed as follows (cf. \cite[Section 19.1]{schubert}). Set 
\[R:=\{(\lambda\lambda^*,1_{r(\lambda)}),(\lambda^*\lambda,1_{s(\lambda)}),(\lambda\circ\mu,\lambda\mu)\mid \lambda,\mu\in\Lambda, ~s(\lambda)=r(\mu)\}.\]
%Here we use the convention $(\id_v)^*=\id_v$ for any $v\in \Lambda^{\op}$.
We define an equivalence relation $\sim_R$ on $\Walk(\Lambda)$ as follows. Let $p,p'\in\Walk(\Lambda)$. Then $p\sim_R p'$ if and only if there is a finite sequence $p=q_0,q_1,\dots,q_{n-1},q_n=p'$ in $\Walk(\Lambda)$ such that $q_i$ is constructed
from $q_{i-1}$ (for $i = 1, 2, \dots ,n$) as follows: some subpath $a$ of $q_{i-1}$ is replaced by a walk $b$ which has the property that $(a,b)\in R$ or $(b,a)\in R$.
The objects of $\G(\Lambda)$ are the objects of $\Lambda$. The morphisms of $\G(\Lambda)$ are the $\sim_R$-equivalence classes of $\Walk(\Lambda)$ (note that equivalent walks have the same source and range). The composition of morphisms in $\G(\Lambda)$ is induced by the composition of walks in $\Walk(\Lambda)$. The assignment $\Lambda\mapsto \G(\Lambda)$ is functorial from $k$-graphs to groupoids.

There is a canonical functor $i:\Lambda\to\mathcal{G}(\Lambda)$ which is the identity on objects and maps a morphism $\lambda$ to $[\lambda]_{\sim_R}$. The functor $i$ has the following universal property: for any functor $T$ from $\Lambda$ to a groupoid $\mathcal{H}$ there exists a unique functor $T':\G(\Lambda)\to\mathcal{H}$ making the diagram
\[\xymatrix{\Lambda\ar[r]^{i}\ar[dr]_{T}&\G(\Lambda)\ar[d]^{T'}\\&\mathcal{H}}\]
commute.

\subsection{Coverings of $k$-graphs} \label{hnhfgftgrgr}

\begin{definition}\label{defcovering}
A {\it covering} of a $k$-graph $\Lambda$ is a pair $(\Omega,\alpha)$ consisting of a $k$-graph $\Omega$ and a $k$-graph morphism $\alpha:\Omega\rightarrow \Lambda$ such that (i) and (ii) below hold.
\begin{enumerate}[(i)]
\item For any $v\in \Omega^{\ob}$, $\alpha$ maps $\Omega v$ 1–1 onto $\Lambda \alpha(v)$.

\smallskip

\item For any $v\in \Omega^{\ob}$, $\alpha$ maps $v\Omega$ 1–1 onto $\alpha(v)\Lambda$.

\end{enumerate} If $(\Omega,\alpha)$ and $(\Sigma,\beta)$ are coverings of $\Lambda$, a {\it morphism} from $(\Omega,\alpha)$ to $(\Sigma,\beta)$ is a $k$-graph morphism
$\phi:\Omega\rightarrow\Sigma$ making the diagram
\[\xymatrix{\Omega\ar[rr]^{\phi}\ar[dr]_{\alpha}&&\Sigma\ar[dl]^{\beta}\\&\Lambda&}\]
commute. %A covering $(\Omega,\alpha)$ is {\it connected} if $\Omega$ is connected.
\end{definition}

\begin{definition}
Let $\Lambda$ be a $k$-graph. A covering $(\Omega,\alpha)$ of $\Lambda$ is {\it universal} if for any covering $(\Sigma,\beta)$ of $\Lambda$ there exists a morphism $(\Omega,\alpha)\to(\Sigma, \beta)$.
\end{definition}

\begin{theorem}[{\cite[Theorem 2.7]{PQR}}]
Every $k$-graph $\Lambda$ has a universal covering.
\end{theorem}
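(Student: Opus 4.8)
The plan is to construct a universal covering explicitly using the fundamental groupoid $\G(\Lambda)$, mimicking the classical topological construction of the universal cover of a space via homotopy classes of paths. The key insight is that the covering conditions (i) and (ii) in Definition~\ref{defcovering} are precisely local bijectivity conditions on incoming and outgoing paths, which is the $k$-graph analogue of a covering space being a local homeomorphism. Since $\Lambda$ is connected, I would fix a basepoint $v_0\in\Lambda^{\ob}$ and build the vertex set of the candidate universal covering $\Omega$ from the morphisms of $\G(\Lambda)$ emanating from $v_0$.

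\emph{First} I would define the objects of $\Omega$ to be the set of morphisms $\gamma\in\G(\Lambda)$ with source $v_0$, i.e.\ the $\sim_R$-equivalence classes of walks starting at $v_0$. I would then define a degree map and the path structure on $\Omega$ by declaring that a path of degree $n$ from $\gamma$ to $\delta$ exists precisely when there is a path $\lambda\in\Lambda^n$ with $r(\lambda)$ and $s(\lambda)$ matching the endpoints of $\gamma,\delta$ and such that $\delta=i(\lambda)\circ\gamma$ in $\G(\Lambda)$ (using the canonical functor $i:\Lambda\to\G(\Lambda)$). The covering map $\alpha:\Omega\to\Lambda$ sends each object $\gamma$ to its range $r(\gamma)$ in $\Lambda$ and is the obvious forgetful map on paths. \emph{Next} I would verify that $\Omega$ is genuinely a $k$-graph: this requires checking that the factorisation property is inherited from $\Lambda$, which should follow because degrees in $\Omega$ are read off from degrees in $\Lambda$ and the factorisation in $\Lambda$ lifts uniquely once the source object in $\Omega$ is fixed.

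\emph{Then} I would check conditions (i) and (ii). Condition (ii) (that $\alpha$ maps $\gamma\Omega$ bijectively onto $r(\gamma)\Lambda$) should be nearly immediate from the definition: an outgoing path at $\gamma$ is determined by an outgoing path at $r(\gamma)$ in $\Lambda$ together with composition in $\G(\Lambda)$, and the groupoid structure forces this correspondence to be a bijection. Condition (i) (local bijectivity on incoming paths $\Omega\gamma$) is the dual statement and is where the invertibility of morphisms in $\G(\Lambda)$ does the real work: to lift an incoming path $\lambda\in\Lambda\alpha(\gamma)$ uniquely, I would use $i(\lambda)^{-1}$ in the groupoid to identify the unique source object $i(\lambda)^{-1}\circ\gamma$. \emph{Finally}, for universality, given any covering $(\Sigma,\beta)$ I would construct a morphism $\Omega\to\Sigma$ by path-lifting: choosing a preimage of $v_0$ in $\Sigma$ and using conditions (i)–(ii) for $\Sigma$ to lift walks, then invoking the universal property of $i:\Lambda\to\G(\Lambda)$ to see that $\sim_R$-equivalent walks lift to the same endpoint, so the map is well-defined on $\Omega^{\ob}$.

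\emph{The hard part will be} establishing that the path-lifting in $\Sigma$ respects the equivalence relation $\sim_R$, i.e.\ that homotopic walks in $\Lambda$ lift to walks in $\Sigma$ with the same terminal vertex. This is the unique-path-lifting property of coverings, and it is the step where connectivity of $\Lambda$ and the defining relations $R$ (capturing $\lambda\lambda^*\sim 1_{r(\lambda)}$, $\lambda^*\lambda\sim 1_{s(\lambda)}$, and composability) must all be used together. I expect this to reduce to checking that each elementary move generating $\sim_R$ does not change the lifted endpoint, which follows from conditions (i) and (ii) applied to $\Sigma$, but assembling these local checks into a coherent well-defined functor $\Omega\to\Sigma$ over $\Lambda$ is the technical crux. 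Verifying that the resulting map is degree-preserving and commutes with $\alpha$ and $\beta$ is then routine.
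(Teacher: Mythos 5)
The paper itself contains no proof of this statement: it is imported verbatim from \cite[Theorem 2.7]{PQR}, so the only meaningful comparison is with the proof in the cited source. Your construction is correct in outline and is essentially that proof, carried out directly rather than through categorical machinery. Pask, Quigg and Raeburn prove existence by establishing an equivalence between coverings of $\Lambda$ and actions of the fundamental groupoid $\G(\Lambda)$; the universal covering is the one corresponding to the natural action of $\G(\Lambda)$ on the set of morphisms of $\G(\Lambda)$ based at $v_0$. Your $\Omega$ --- objects the morphisms $\gamma$ of $\G(\Lambda)$ with source $v_0$, morphisms the pairs $(\lambda,\gamma)$ with $s(\lambda)=r(\gamma)$, with source $\gamma$, range $i(\lambda)\circ\gamma$ and degree $d(\lambda)$ --- is exactly the covering that this equivalence assigns to that action. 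What your route buys is self-containedness; what the route of \cite{PQR} buys is the simultaneous classification of all coverings by conjugacy classes of subgroups of $\pi(\Lambda,x)$, which the present paper also quotes and uses (e.g.\ in Lemma~\ref{lemrepell} and Lemma~\ref{lemktree}). Your identification of the crux of universality is also accurate: one must check that each elementary move generating $\sim_R$ leaves the endpoint of the lifted walk in $\Sigma$ unchanged, each move being handled by one application of condition (i) and one of condition (ii) for $(\Sigma,\beta)$; this is precisely the content of the covering-to-action direction of \cite[Theorem 2.2]{PQR}.

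Two bookkeeping points should be fixed, though neither affects correctness. First, with the paper's conventions ($\Lambda v=s^{-1}(v)$, $u\Lambda=r^{-1}(u)$ in Definition~\ref{defcovering}) you have interchanged the roles of (i) and (ii): for your $\Omega$ it is the source-fibre condition (i), that $\alpha$ maps $\Omega\gamma$ onto $\Lambda\alpha(\gamma)$, which holds by construction, while it is the range-fibre condition (ii), concerning $\gamma\Omega\to\alpha(\gamma)\Lambda$, that requires the groupoid inverse: a lift of $\lambda$ with range $\gamma$ must have source $i(\lambda)^{-1}\circ\gamma$, which is exactly your formula, attached to the wrong condition. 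Second, two small verifications should be stated: that $\Omega$ inherits the standing hypotheses on $k$-graphs (nonempty, connected, row-finite, no sources; connectivity follows by induction on the length of a representative walk), and that in the universality step $\beta^{-1}(v_0)\neq\emptyset$, which follows from connectedness of $\Lambda$, nonemptiness of $\Sigma$, and walk-lifting in $\Sigma$.
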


The {\it fundamental group} of $\Lambda$ at a vertex $x\in \Lambda^{\ob}$ is the group $\pi(\Lambda,x):= x\G(\Lambda)x$. By \cite[Theorems 2.2, 2.7, 2.8]{PQR} there is a 1-1 correspondence between the isomorphism classes of coverings of $\Lambda$ and the conjugacy classes of subgroups of $\pi(\Lambda,x)$. If $\alpha:\Omega\to\Lambda$ is a $k$-graph morphism and $v\in\Omega^{\ob}$, then there is a group homomorphism $\alpha_* :\pi(\Omega,v) \to \pi(\Lambda,\alpha(v))$ induced by $\alpha$. If $(\Omega,\alpha)$ is a covering of $\Lambda$, then $\alpha_* :\pi(\Omega,v) \to \pi(\Lambda,\alpha(v))$ is injective.

\section{Representation $k$-graphs} 
In this section $\Lambda$ denotes a fixed $k$-graph. 

\subsection{Representation $k$-graphs} Below we introduce the main notion of this paper, namely a representation $k$-graph for a given $k$-graph. 

\begin{definition}\label{defwp}
A {\it representation $k$-graph} for $\Lambda$ is a pair $(\Delta,\alpha)$ consisting of a $k$-graph $\Delta$ and a $k$-graph morphism $\alpha:\Delta\to\Lambda$ such that (i) and (ii) below hold.
\begin{enumerate}[(i)]
\item For any $v\in \Delta^{\ob}$, $\alpha$ maps $\Delta v$ 1–1 onto $\Lambda \alpha(v)$.

\smallskip

\item For any $v\in \Delta^{\ob}$ and $n\in \N^k$, $v\Delta^n$ is a singleton.

\end{enumerate}
If $(\Delta,\alpha)$ and $(\Sigma,\beta)$ are representation $k$-graphs for $\Lambda$, a {\it morphism} from $(\Delta,\alpha)$ to $(\Sigma,\beta)$ is a $k$-graph morphism
$\phi:\Delta\rightarrow\Sigma$ making the diagram
\[\xymatrix{\Delta\ar[rr]^{\phi}\ar[dr]_{\alpha}&&\Sigma\ar[dl]^{\beta}\\&\Lambda&}\]
commute. %A representation $k$-graph $(\Delta,\alpha)$ is {\it connected} if $\Delta$ is connected. %One checks easily that a morphism $\alpha:(\Delta,\phi)\to (\Sigma,\psi)$ is an isomorphism if and only if $\alpha^{\ob}$ and $\alpha^1$ are bijective.
\end{definition}

We will see in Section 4 that any representation $k$-graph for $\Lambda$ yields a module for the Kumjian-Pask algebra $\KP(\Lambda)$. The irreducible representation $k$-graphs defined below are precisely those representation $k$-graphs that yield a simple module.

\begin{definition}\label{defrepirres}
Let $(\Delta,\alpha)$ be a representation $k$-graph for $\Lambda$. Then $(\Delta,\alpha)$ is called {\it irreducible} if $\alpha(\Walk_u(\Delta))\neq \alpha(\Walk_v(\Delta))$ for any $u\neq v\in \Delta^{\ob}$.
\end{definition} 

We denote by $\RG(\Lambda)$ the category of representation $k$-graphs for $\Lambda$.  The lemma below will be used quite often in the sequel.

\begin{lemma}\label{lemwell} Let $(\Delta,\alpha)$ be an object of $\RG(\Lambda)$. Let $p, q\in \Walk(\Delta)$ such that $\alpha(p)=\alpha(q)$. If $s(p)=s(q)$ or $r(p)=r(q)$, then $p=q$.
\end{lemma}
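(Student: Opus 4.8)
The plan is to argue by induction on the common length of the two walks, peeling off one edge at a time from the end at which the sources (respectively ranges) are known to agree, and using the two defining properties (i) and (ii) of Definition~\ref{defwp} to force the peeled-off edges to coincide.

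First I would record a preliminary observation. Since $\alpha$ is a $k$-graph morphism it is degree-preserving; as the only degree-zero morphisms of a $k$-graph are the identities, $\alpha$ sends identities to identities and non-identities to non-identities, and therefore acts on the doubled graph by $e\mapsto\alpha(e)$ and $e^*\mapsto\alpha(e)^*$. Consequently the induced map $\Walk(\Delta)\to\Walk(\Lambda)$ preserves length and records each letter separately: writing $p=a_\ell\cdots a_1$ and $q=b_m\cdots b_1$ as sequences of edges of the doubled graph, the equality $\alpha(p)=\alpha(q)$ of walks forces $\ell=m$ and $\alpha(a_i)=\alpha(b_i)$ for every $i$. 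In particular $a_i$ and $b_i$ point in the same direction (both forward or both backward), their underlying morphisms have the same $\alpha$-image, and, by degree-preservation, the same degree.

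Assume $r(p)=r(q)$; the induction is on $\ell$, the base case $\ell=0$ being immediate since a length-zero walk is determined by its range. For the inductive step I would inspect the last letters $a_\ell,b_\ell$, which determine the common range. If they are forward edges, then $a_\ell$ and $b_\ell$ are morphisms of $\Delta$ with the same range and the same degree $n$, so both lie in $r(p)\Delta^n$; by property (ii) this set is a singleton, whence $a_\ell=b_\ell$. If they are backward edges, say $a_\ell=e^*$ and $b_\ell=f^*$, then $e,f$ are morphisms of $\Delta$ with $s(e)=r(p)=r(q)=s(f)$ and $\alpha(e)=\alpha(f)$; by property (i) the map $\alpha$ is injective on $\Delta s(e)$, so $e=f$ and again $a_\ell=b_\ell$. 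In either case, deleting the matching last letters yields sub-walks $p',q'$ with $\alpha(p')=\alpha(q')$ and, after a short check of the doubled-graph source/range conventions, $r(p')=r(q')$; the inductive hypothesis gives $p'=q'$ and hence $p=q$.

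The case $s(p)=s(q)$ is handled by the mirror-image induction, peeling off the first letters $a_1,b_1$ instead; here the roles of (i) and (ii) are interchanged, forward first-edges being pinned down by (i) (equal source and $\alpha$-image) and backward first-edges by (ii) (equal range and degree). The only real care needed — and the step I expect to be the most error-prone — is the bookkeeping in the doubled graph, namely that the reversed edge $e^*$ has source $r(e)$ and range $s(e)$, which is exactly what guarantees that the source/range condition is correctly transmitted to the shorter walks at each stage. Once this bookkeeping is set up, properties (i) and (ii) do all the work and the induction closes.
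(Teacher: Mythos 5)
Your proof is correct and follows essentially the same route as the paper's: induction on the common length of $p$ and $q$, pinning down each letter of the doubled graph via Definition~\ref{defwp}(i) for one orientation and (ii) for the other. The only cosmetic difference is that the paper runs the induction for the case $s(p)=s(q)$ and then deduces the case $r(p)=r(q)$ by applying the result to $p^*$ and $q^*$, whereas you run a second, mirror-image induction (and your base case should start at length $1$ rather than $0$, since walks in the paper are nonempty — a trivial adjustment).
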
 
\begin{proof}
Clearly $p=x_n\dots x_1$ and $q=y_n\dots y_1$, for some $n\geq 1$ and $x_1,\dots,x_n, y_1,\dots, y_n\in \Delta\cup\Delta^*$. First suppose that $s(p)=s(q)$. We proceed by induction on $n$.

Case $n=1$: Suppose $\alpha(x_1)=\alpha(y_1)=\lambda$ for some $\lambda\in\Lambda$. It follows from Definition \ref{defwp}(i) that $x_1=y_1$ and hence $p=q$. Suppose now that $\alpha(x_1)=\alpha(y_1)=\lambda^*$ for some $\lambda\in\Lambda$. Then it follows from Definition \ref{defwp}(ii) that $x_1=y_1$ and hence $p=q$.

Case $n\to n+1$:
Suppose that $p=x_{n+1}\dots  x_1$ and $q=y_{n+1}\dots y_1$. By the inductive assumption we have $x_i=y_i$ for any $1\leq i\leq n$. It follows that $r(x_n)=r(y_n)=:v$. Clearly $x_{n+1}, y_{n+1}\in \Walk_v(\Delta)$. Now we can apply the case $n=1$ and obtain $x_{n+1}=y_{n+1}$.

Now suppose that $r(p)=r(q)$. Then $s(p^*)=s(q^*)$. Since clearly $\alpha(p^*)=\alpha(q^*)$, we obtain $p^*=q^*$. Hence $p=q$.
\end{proof}

The next lemma is easy to check.

\begin{lemma}\label{hfgfghhffeee}
If $(\Omega,\alpha)$ is a covering of $\Lambda$ and $(\Delta,\beta)$ a representation $k$-graph for $\Omega$, then $(\Delta,\alpha\circ\beta)$ is a representation $k$-graph for $\Lambda$. On the other hand, if $(\Delta,\alpha)$ is a representation $k$-graph for $\Lambda$ and $(\Omega,\beta)$ a covering of $\Delta$, then $(\Omega,\alpha\circ\beta)$ is a representation $k$-graph for $\Lambda$.
\end{lemma}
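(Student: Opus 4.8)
The plan is to verify the two defining conditions of Definition~\ref{defwp} directly in each of the two assertions, exploiting the fact that compositions of functors are functors and that the relevant ``1--1 onto'' and ``singleton'' conditions chain together along the composite. In both cases the map $\alpha\circ\beta$ is a $k$-graph morphism because $\alpha$ and $\beta$ are, so the only content is in checking conditions (i) and (ii).

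For the first assertion, suppose $(\Omega,\alpha)$ is a covering of $\Lambda$ and $(\Delta,\beta)$ is a representation $k$-graph for $\Omega$. Fix $v\in\Delta^{\ob}$. For condition (i) of Definition~\ref{defwp}, I would first apply the representation property of $(\Delta,\beta)$ to see that $\beta$ maps $\Delta v$ bijectively onto $\Omega\,\beta(v)$, and then apply condition (i) of Definition~\ref{defcovering} for the covering $(\Omega,\alpha)$ to see that $\alpha$ maps $\Omega\,\beta(v)$ bijectively onto $\Lambda\,\alpha(\beta(v))$; composing these two bijections shows $\alpha\circ\beta$ maps $\Delta v$ 1--1 onto $\Lambda\,(\alpha\circ\beta)(v)$. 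For condition (ii), the requirement that $v\Delta^n$ be a singleton is a property of $\Delta$ alone and is inherited directly from $(\Delta,\beta)$ being a representation $k$-graph for $\Omega$; the covering $\alpha$ plays no role here. Thus $(\Delta,\alpha\circ\beta)$ satisfies both conditions.

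For the second assertion, suppose $(\Delta,\alpha)$ is a representation $k$-graph for $\Lambda$ and $(\Omega,\beta)$ is a covering of $\Delta$. Fix $v\in\Omega^{\ob}$. For condition (i), I would chain the bijections in the same spirit: condition (i) of Definition~\ref{defcovering} for the covering $(\Omega,\beta)$ gives that $\beta$ maps $\Omega v$ bijectively onto $\Delta\,\beta(v)$, and condition (i) of Definition~\ref{defwp} for the representation $k$-graph $(\Delta,\alpha)$ gives that $\alpha$ maps $\Delta\,\beta(v)$ bijectively onto $\Lambda\,(\alpha\circ\beta)(v)$, so the composite is again a bijection from $\Omega v$ onto $\Lambda\,(\alpha\circ\beta)(v)$. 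For condition (ii), I must show $v\Omega^n$ is a singleton. Here I would use condition (ii) of Definition~\ref{defcovering}, which says $\beta$ maps $v\Omega$ 1--1 onto $\beta(v)\Delta$; restricting to degree $n$ (and using that $\beta$ is degree-preserving) shows $\beta$ restricts to a bijection $v\Omega^n\to\beta(v)\Delta^n$, and since $(\Delta,\alpha)$ is a representation $k$-graph the target $\beta(v)\Delta^n$ is a singleton, forcing $v\Omega^n$ to be a singleton as well.

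The mild subtlety, and the only place that requires any care, is in the second assertion's verification of condition (ii): one must confirm that the degree-$n$ restriction of the covering bijection $v\Omega\to\beta(v)\Delta$ is still a bijection onto $\beta(v)\Delta^n$, which follows because $\beta$ preserves degrees and hence cannot send a degree-$n$ path to one of different degree. Everything else is a routine composition of bijections and an inheritance of a property that lives on one graph alone, so I expect no genuine obstacle; this is precisely why the paper calls the lemma ``easy to check.''
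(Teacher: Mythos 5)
Your proof is correct: the paper states this lemma without proof (``The next lemma is easy to check''), and your argument is exactly the intended routine verification --- composing the two bijections of condition (i) in Definitions~\ref{defcovering} and~\ref{defwp}, and for condition (ii) either inheriting the singleton property directly (first assertion) or restricting the degree-preserving covering bijection $v\Omega\to\beta(v)\Delta$ to degree $n$ (second assertion). You also correctly identified and handled the one point needing care, namely that the restriction $v\Omega^n\to\beta(v)\Delta^n$ remains a bijection because $\beta$ preserves degrees.
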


\begin{proposition}\label{propsurj}
Let $\phi:(\Delta,\alpha)\to (\Sigma,\beta)$ be a morphism in $\RG(\Lambda)$. Then $(\Delta,\phi)$ is a covering of $\Sigma$. 
\end{proposition}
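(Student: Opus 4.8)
The plan is to verify the two defining conditions (i) and (ii) of a covering (Definition~\ref{defcovering}) for the pair $(\Delta,\phi)$, where $\phi:\Delta\to\Sigma$ is the given $k$-graph morphism. First I would observe that $\phi$ is indeed a $k$-graph morphism by hypothesis, so the only content is the two local-bijectivity conditions. The key structural fact I intend to exploit is that both $(\Delta,\alpha)$ and $(\Sigma,\beta)$ satisfy conditions (i) and (ii) of Definition~\ref{defwp}, together with the commuting triangle $\beta\circ\phi=\alpha$.

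For condition (i) of Definition~\ref{defcovering}, I must show that for each $v\in\Delta^{\ob}$, the map $\phi$ restricts to a bijection $\Delta v\to\Sigma\,\phi(v)$. The commuting relation $\beta\circ\phi=\alpha$ gives a factorisation of the map $\alpha|_{\Delta v}:\Delta v\to\Lambda\,\alpha(v)$ through $\phi|_{\Delta v}$ followed by $\beta|_{\Sigma\phi(v)}$. Since by Definition~\ref{defwp}(i) applied to $(\Delta,\alpha)$ the map $\alpha$ sends $\Delta v$ bijectively onto $\Lambda\,\alpha(v)$, and by Definition~\ref{defwp}(i) applied to $(\Sigma,\beta)$ the map $\beta$ sends $\Sigma\,\phi(v)$ bijectively onto $\Lambda\,\beta(\phi(v))=\Lambda\,\alpha(v)$, both outer legs of the triangle are bijections onto the same target $\Lambda\,\alpha(v)$. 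A standard diagram-chase then forces $\phi|_{\Delta v}$ to be a bijection: injectivity follows because $\alpha=\beta\circ\phi$ is injective on $\Delta v$, and surjectivity follows by comparing the two bijections (given $\sigma\in\Sigma\,\phi(v)$, pull $\beta(\sigma)$ back along the bijection $\alpha|_{\Delta v}$ to get a preimage whose image under $\phi$ must equal $\sigma$ by injectivity of $\beta$ on $\Sigma\,\phi(v)$).

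For condition (ii) of Definition~\ref{defcovering}, I must show that $\phi$ restricts to a bijection $v\Delta\to\phi(v)\Sigma$ for each $v\in\Delta^{\ob}$. Here I would work degree by degree, using Definition~\ref{defwp}(ii), which asserts that $v\Delta^n$ and $\phi(v)\Sigma^n$ are \emph{singletons} for every $n\in\N^k$. Since $\phi$ is degree-preserving, it maps the single element of $v\Delta^n$ to an element of $\phi(v)\Sigma^n$, which is the unique element of that singleton; taking the union over all $n$ (and noting $v\Delta=\bigsqcup_n v\Delta^n$, $\phi(v)\Sigma=\bigsqcup_n\phi(v)\Sigma^n$) yields a bijection automatically. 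Thus condition (ii) is essentially immediate from the singleton hypothesis.

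The main obstacle, and hence the step deserving the most care, is condition (i): one must be attentive that the target of the bijection $\beta|_{\Sigma\phi(v)}$ is genuinely $\Lambda\,\alpha(v)$ rather than some other vertex-set, which is exactly where the commutativity $\beta(\phi(v))=\alpha(v)$ is needed, and one must present the diagram-chase cleanly so that the bijectivity of $\phi$ is deduced rather than assumed. Condition (ii) is comparatively routine given the singleton structure, so I expect the write-up to be short once the chase in (i) is recorded carefully.
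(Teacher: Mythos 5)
Your proposal is correct and follows essentially the same route as the paper: condition (i) is obtained from the commuting triangle $\beta\circ\phi=\alpha$ by comparing the two bijections $\alpha|_{\Delta v}$ and $\beta|_{\Sigma\phi(v)}$ onto $\Lambda\,\alpha(v)$ (the paper packages your diagram-chase as the identity $\phi|_{\Delta v}=(\beta|_{\Sigma\phi(v)})^{-1}\circ\alpha|_{\Delta v}$), and condition (ii) is deduced exactly as you do, from the singleton sets $v\Delta^n$, $\phi(v)\Sigma^n$ and degree-preservation.
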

\begin{proof}
Let let $v\in\Delta^{\ob}$. Since $\Delta$ and $\Sigma$ satisfy condition (i) in Definition \ref{defwp}, the maps $\alpha|_{\Delta v}:\Delta v\to \Lambda \alpha(v)$ and $\beta|_{\Sigma \phi(v)}:\Sigma\phi(v)\to \Lambda \alpha(v)$ are bijective. It follows that 
\begin{align*}
&\beta\circ\phi=\alpha\\
\Rightarrow~&(\beta\circ\phi)|_{\Delta v}=\alpha|_{\Delta v}\\
\Rightarrow~&\beta|_{\Sigma\phi(v)}\circ\phi|_{\Delta v}=\alpha|_{\Delta v}\\
\Rightarrow~&\phi|_{\Delta v}=(\beta|_{\Sigma\phi(v)})^{-1}\circ\alpha|_{\Delta v}.
\end{align*}
Hence $\phi|_{\Delta v}:\Delta v\to \Sigma\phi(v)$ is bijective, i.e. $\phi$ maps $\Delta v$ 1–1 onto $\Sigma \phi(v)$.\\
It remains to show that $\phi$ maps $v\Delta$ 1–1 onto $\phi(v)\Sigma$. But this follows from the fact that $v\Delta=\bigsqcup_{n\in\N^k}v\Delta^n$, $\phi(v)\Sigma=\bigsqcup_{n\in\N^k}\phi(v)\Sigma^n$, each of the sets $v\Delta^n$ and $\phi(v)\Sigma^n$ is a singleton (by condition (ii) in Definition \ref{defwp}) and $\phi$ is a degree-preserving functor.
\end{proof}

\subsection{Quotients of representation $k$-graphs}
For any object $(\Delta,\alpha)$ of $\RG(\Lambda)$ we define an equivalence relation $\sim$ on $\Delta^{\ob}$ by $u\sim v$ if $\alpha(\Walk_u(\Delta))=\alpha(\Walk_v(\Delta))$. Recall that if $\sim$ and $\approx$ are equivalence relations on a set $X$, then one writes $\approx~\leq~ \sim $ (and calls $\approx$ {\it finer} than $\sim$, and $\sim$ {\it coarser} than $\approx$) if $x\approx y$ implies that $x\sim y$, for any $x,y\in X$. 

\begin{definition}\label{defadm}
Let $(\Delta,\alpha)$ be an object of $\RG(\Lambda)$. An equivalence relation $\approx$ on $\Delta^{\ob}$ is called {\it admissible} if (i) and (ii) below hold.
\begin{enumerate}[(i)]
\item $\approx~\leq~\sim$.
\item If $u\approx v$, $p\in{} _x\!\Walk_u(\Delta)$, $q\in{} _y\!\Walk_v(\Delta)$ and $\alpha(p)=\alpha(q)$, then $x\approx y$.
\end{enumerate}
\end{definition}

The lemma below is easy to check.
\begin{lemma}\label{lemlattice}
The admissible equivalence relations on $\Delta^{\ob}$ (with partial order $\leq$) form a bounded lattice whose maximal element is $\sim$ and whose minimal element is the equality relation $=$.
\end{lemma}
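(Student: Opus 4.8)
The plan is to verify the three requirements for a bounded lattice directly: that the admissible relations form a poset under $\leq$, that any two admissible relations have a meet and a join in this poset, and that $\sim$ and $=$ are the top and bottom elements. Since $\leq$ is inherited from the lattice of all equivalence relations on $\Delta^{\ob}$, the poset axioms are automatic. The real content is showing that the set of admissible relations is \emph{closed} under the lattice operations of the ambient lattice of equivalence relations, and that it contains the claimed extremal elements.

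First I would record that $\sim$ is itself admissible: condition (i) holds trivially since $\sim~\leq~\sim$, and condition (ii) is exactly the statement that if $u\sim v$ (i.e. $\alpha(\Walk_u(\Delta))=\alpha(\Walk_v(\Delta))$) then walks $p,q$ with matching images force $x\sim y$; this follows because $\alpha(p)=\alpha(q)$ together with $u\sim v$ lets one produce, for each walk out of $x$, a walk out of $y$ with the same $\alpha$-image by concatenation, yielding $\alpha(\Walk_x(\Delta))\subseteq\alpha(\Walk_y(\Delta))$ and symmetrically. Since any admissible $\approx$ satisfies $\approx~\leq~\sim$ by definition, $\sim$ is the maximal admissible relation. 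At the other extreme, the equality relation $=$ is admissible: (i) holds because $=$ is finer than every relation, and (ii) holds because $u=v$ forces $p,q$ to start at the same vertex, so Lemma \ref{lemwell} (applied via $s(p)=s(q)$ and $\alpha(p)=\alpha(q)$) gives $p=q$, whence $x=r(p)=r(q)=y$. As $=$ is finer than every equivalence relation, it is the minimal element.

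Next I would show closure under meets. The meet of admissible relations $\{\approx_i\}$ in the ambient lattice is their intersection $\bigcap_i \approx_i$, which is again an equivalence relation. Condition (i) for the intersection is immediate since each $\approx_i~\leq~\sim$ forces the intersection $\leq~\sim$. For condition (ii), if $u\mathbin{(\bigcap_i\approx_i)}v$ then $u\approx_i v$ for every $i$, so each $\approx_i$ yields $x\approx_i y$ via its own condition (ii), hence $x\mathbin{(\bigcap_i\approx_i)}y$. Thus arbitrary intersections of admissible relations are admissible, so the admissible relations form a complete meet-semilattice with top element $\sim$; a standard lattice-theoretic fact then supplies joins (the join of a family being the meet of all admissible relations above them), giving a complete, hence bounded, lattice.

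The main obstacle, and the only step requiring genuine care, is verifying condition (ii) for $\sim$ itself, since this is where one must actually use the structure of walks and their $\alpha$-images rather than purely formal lattice manipulations. The subtlety is that condition (ii) asks one to transport the equivalence from the sources $u,v$ to the ranges $x,y$ of matching walks, and establishing $\alpha(\Walk_x(\Delta))=\alpha(\Walk_y(\Delta))$ requires checking that every element of one side arises as the $\alpha$-image of a suitably concatenated walk on the other side. I expect this to reduce to a direct argument: given $w\in\Walk_x(\Delta)$, the concatenation $w\cdot p$ lies in $\Walk_u(\Delta)$, so $\alpha(w\cdot p)=\alpha(w)\alpha(p)\in\alpha(\Walk_u(\Delta))=\alpha(\Walk_v(\Delta))$; one then peels off the $\alpha(q)=\alpha(p)$ portion to recover an element of $\alpha(\Walk_y(\Delta))$, using Lemma \ref{lemwell} to guarantee the factorisation is unique and well behaved. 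Everything else in the proof is formal bookkeeping with the ambient lattice of equivalence relations, so I would keep those verifications brief and concentrate the exposition on this transport argument.
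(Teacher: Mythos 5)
Your proof is correct. The paper offers no argument for Lemma \ref{lemlattice} (it is dismissed as ``easy to check''), and your verification supplies exactly the routine details the author had in mind: closure of admissible relations under intersection gives meets, joins come from meets of upper bounds beneath the top element, admissibility of $=$ follows from Lemma \ref{lemwell}, and you correctly isolate the only non-formal step --- condition (ii) of Definition \ref{defadm} for $\sim$ itself --- which your concatenation-and-peeling argument (split the walk $z\in\Walk_v(\Delta)$ with $\alpha(z)=\alpha(w)\alpha(p)$ into an initial segment equal to $q$ by Lemma \ref{lemwell} and a remainder starting at $y$ with image $\alpha(w)$) handles properly.
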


Let $(\Delta,\alpha)$ be an object of $\RG(\Lambda)$ and $\approx$ an admissible equivalence relation on $\Delta^{\ob}$. We define an equivalence relation $\approx$ on $\Delta$ by $\delta\approx \delta'$ if $s(\delta)\approx s(\delta')$ and $\alpha(\delta)=\alpha(\delta')$. Define a $k$-graph $(\Delta_\approx,\alpha_\approx)$ by 
\begin{align*}
\Delta_\approx^{\ob}&=\Delta^{\ob}/\approx,\\
\Delta_\approx&=\Delta/\approx,\\
s([\delta])&=[s(\delta)],\\
r([\delta])&=[r(\delta)],\\
d([\delta])&=d(\delta).
\end{align*}
The composition of morphisms in $\Delta_\approx$ is defined as follows. Let $[\delta],[\delta']\in \Delta/\approx$ such that $s([\delta])=r([\delta'])$. Then $s(\delta)\approx r(\delta')$ whence $s(\delta)\sim r(\delta')$, i.e. $\alpha(\Walk_{s(\delta)}(\Delta))=\alpha(\Walk_{r(\delta')}(\Delta))$. This implies that there is a $\delta''\in \Delta r(\delta')$ such that $\alpha(\delta'')=\alpha(\delta)$. Note that $[\delta'']=[\delta]$. We define $[\delta]\circ[\delta']:=[\delta''\circ\delta']$. One checks easily that this composition is well-defined. The identity morphisms are defined by $1_{[v]}=[1_v]$. Moreover, we define a $k$-graph morphism $\alpha_\approx:\Delta_\approx\to\Lambda$ by $\alpha_\approx([v])=\alpha(v)$ and $\alpha_\approx([\delta])=\alpha(\delta)$ for any $v\in \Delta^{\ob}$ and $\delta\in\Delta$. We leave it to the reader to check that $(\Delta_\approx,\alpha_\approx)$ is a representation $k$-graph for $\Lambda$. We call $(\Delta_\approx,\alpha_\approx)$ a {\it quotient} of $(\Delta,\alpha)$. 

\begin{lemma}\label{lemcompare}
Let $(\Delta,\alpha)$ be an object of $\RG(\Lambda)$. Let $\approx~\leq ~\approx'$ be admissible equivalence relations on $\Delta^{\ob}$. Then there is a morphism $(\Delta_\approx,\alpha_\approx)\to (\Delta_{\approx'},\alpha_{\approx'})$.%Then $(\Delta_{\approx'},\alpha_{\approx'})$ is isomorphic to a quotient of $(\Delta_\approx,\alpha_\approx)$.
\end{lemma}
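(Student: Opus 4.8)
The plan is to construct the morphism explicitly as the natural projection induced by passing from the finer equivalence $\approx$ to the coarser one $\approx'$. Since $\approx~\leq~\approx'$, every $\approx$-class is contained in a single $\approx'$-class, so the assignment $[v]_\approx\mapsto[v]_{\approx'}$ on vertices and $[\delta]_\approx\mapsto[\delta]_{\approx'}$ on paths is well-defined at the level of sets. First I would verify that this map respects the source, range, and degree data: this is immediate from the formulas $s([\delta])=[s(\delta)]$, $r([\delta])=[r(\delta)]$, $d([\delta])=d(\delta)$ used to define both quotients, since they depend only on the underlying $\delta\in\Delta$.

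Next I would check that the assignment is functorial, i.e.\ that it respects composition. The subtle point is that composition in $\Delta_\approx$ was defined by choosing a representative $\delta''\in\Delta r(\delta')$ with $\alpha(\delta'')=\alpha(\delta)$ and setting $[\delta]_\approx\circ[\delta']_\approx:=[\delta''\circ\delta']_\approx$. I would argue that the very same representative $\delta''$ can be used to compute the composition in $\Delta_{\approx'}$: indeed $s(\delta)\approx r(\delta')$ implies $s(\delta)\approx' r(\delta')$, so the composite $[\delta]_{\approx'}\circ[\delta']_{\approx'}$ is defined, and a suitable $\delta''$ exists there as well. Because $\approx~\leq~\approx'$, the class $[\delta''\circ\delta']_{\approx'}$ is the image of $[\delta''\circ\delta']_\approx$, so the map sends the composite to the composite. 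I would also note that identities are preserved since $1_{[v]_\approx}=[1_v]_\approx\mapsto[1_v]_{\approx'}=1_{[v]_{\approx'}}$. This establishes that the map is a $k$-graph morphism $\phi:\Delta_\approx\to\Delta_{\approx'}$.

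Finally I would verify the commutativity condition required for a morphism in $\RG(\Lambda)$, namely $\alpha_{\approx'}\circ\phi=\alpha_\approx$. This is a direct computation: on a vertex, $\alpha_{\approx'}(\phi([v]_\approx))=\alpha_{\approx'}([v]_{\approx'})=\alpha(v)=\alpha_\approx([v]_\approx)$, and analogously on paths using $\alpha_\approx([\delta]_\approx)=\alpha(\delta)=\alpha_{\approx'}([\delta]_{\approx'})$. Hence $\phi$ is the desired morphism $(\Delta_\approx,\alpha_\approx)\to(\Delta_{\approx'},\alpha_{\approx'})$ in $\RG(\Lambda)$.

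The main obstacle I anticipate is the well-definedness and functoriality check in the middle step. Because composition in each quotient involves a noncanonical choice of representative, care is needed to confirm that the chosen representative in $\Delta_\approx$ descends to a valid choice in $\Delta_{\approx'}$ and that the resulting classes genuinely match; once one observes that the inclusion $\approx~\leq~\approx'$ forces compatibility of the two composition recipes, the verification reduces to routine bookkeeping. The source/range/degree preservation and the commutativity with $\alpha$ are essentially immediate.
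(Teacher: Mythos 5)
Your proposal is correct and follows essentially the same route as the paper: define the natural projection $\phi([v]_\approx)=[v]_{\approx'}$, $\phi([\delta]_\approx)=[\delta]_{\approx'}$, note well-definedness from $\approx~\leq~\approx'$, and observe $\alpha_{\approx'}\circ\phi=\alpha_\approx$. The paper's proof is just a terser version of yours, leaving the functoriality and composition-compatibility checks (which you carry out explicitly, and correctly) to the reader.
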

\begin{proof}
Define a $k$-graph morphism $\phi:\Delta_\approx\to \Delta_{\approx'}$ by $\phi([v]_\approx)=[v]_{\approx'}$ and $\phi([\delta]_\approx)=[\delta]_{\approx'}$ for any $v\in\Delta^{\ob}$ and $\delta\in\Delta$. Since $\approx~\leq ~\approx'$, $\phi$ is well-defined. Clearly $\alpha_{\approx'}\circ\phi=\alpha_{\approx}$ and therefore $\phi:(\Delta_\approx,\alpha_\approx)\to (\Delta_{\approx'},\alpha_{\approx'})$ is a morphism in $\RG(\Lambda)$. %It follows from Proposition \ref{thmadm} that $(\Delta_{\approx'},\alpha_{\approx'})$ is isomorphic to a quotient of $(\Delta_\approx,\alpha_\approx)$.
\end{proof}

\begin{lemma}\label{lempath}
Let $(\Delta,\alpha)$ and $(\Sigma,\beta)$ be objects of $\RG(\Lambda)$. Let $u\in \Delta^{\ob}$ and $v\in \Sigma^{\ob}$. If $\alpha(\Walk_u(\Delta))\subseteq \beta(\Walk_v(\Sigma))$, then $\alpha(\Walk_u(\Delta))=\beta(\Walk_v(\Sigma))$.
\end{lemma}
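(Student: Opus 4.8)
The plan is to fix the common base vertex, then characterise each of the two walk-image sets as exactly the walks in $\Lambda$ that admit a successful step-by-step lift, and finally prove the reverse inclusion by induction on length. First I would note that $\alpha(\Walk_u(\Delta))$ and $\beta(\Walk_v(\Sigma))$ are subsets of $\Walk(\Lambda)$. Since $1_u\in\Walk_u(\Delta)$ maps to $1_{\alpha(u)}$, and a walk whose $\beta$-image is a trivial degree-$0$ walk must itself be $1_v$ (degree-$0$ edges are identities and $\beta$ preserves length), the inclusion forces $\alpha(u)=\beta(v)=:x_0$, so both sets lie in $\Walk_{x_0}(\Lambda)$. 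Next I would spell out the lifting process: given $w=\mu_n\cdots\mu_1\in\Walk_{x_0}(\Lambda)$, I read the $\mu_i$ one at a time, maintaining a current vertex $w_{i-1}\in\Delta^{\ob}$ with $w_0=u$. A forward edge $\mu_i=\lambda$ always lifts uniquely, by Definition \ref{defwp}(i), to the $\delta$ with $s(\delta)=w_{i-1}$ and $\alpha(\delta)=\lambda$. A backward edge $\mu_i=\lambda^*$ can only lift through the unique element $\delta_0$ of the singleton $w_{i-1}\Delta^{d(\lambda)}$ (Definition \ref{defwp}(ii)), and the lift exists if and only if $\alpha(\delta_0)=\lambda$. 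By Lemma \ref{lemwell} any lift is unique, so the endpoint reached is well defined; hence $w\in\alpha(\Walk_u(\Delta))$ precisely when every step succeeds, and the identical description holds for $\beta(\Walk_v(\Sigma))$ with current vertices $v_{i-1}\in\Sigma^{\ob}$, $v_0=v$.

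With this in hand I would prove $\beta(\Walk_v(\Sigma))\subseteq\alpha(\Walk_u(\Delta))$ by induction on the length of $w\in\beta(\Walk_v(\Sigma))$. Since prefixes of liftable walks are liftable, the prefix $w_{<i}$ lies in $\beta(\Walk_v(\Sigma))$ and, by the inductive hypothesis, also in $\alpha(\Walk_u(\Delta))$; thus it lifts in both $\Delta$ and $\Sigma$, to endpoints $w_{i-1}$ and $v_{i-1}$, each unique by Lemma \ref{lemwell}. If the last edge $\mu_i$ is forward there is nothing to check. The crux is the backward case $\mu_i=\lambda^*$. Suppose it fails to lift in $\Delta$: writing $\{\delta_0\}=w_{i-1}\Delta^{d(\lambda)}$ and $\lambda_0:=\alpha(\delta_0)$, failure means $\lambda_0\neq\lambda$, while $r(\lambda_0)=\alpha(w_{i-1})=r(\lambda)$ and $d(\lambda_0)=d(\lambda)$. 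Then the walk $\lambda_0^*\cdot w_{<i}$ \emph{does} lift in $\Delta$, via $\delta_0^*$, so it belongs to $\alpha(\Walk_u(\Delta))\subseteq\beta(\Walk_v(\Sigma))$ and therefore lifts in $\Sigma$. Lifting $\lambda_0^*$ there requires $\beta(\sigma_0)=\lambda_0$ for the unique $\sigma_0\in v_{i-1}\Sigma^{d(\lambda)}$; but $w$ itself lies in $\beta(\Walk_v(\Sigma))$, so lifting its edge $\lambda^*$ at the same vertex $v_{i-1}$ requires $\beta(\sigma_0)=\lambda$ for the \emph{same} singleton $v_{i-1}\Sigma^{d(\lambda)}$. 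Hence $\lambda=\lambda_0$, a contradiction. So $\mu_i$ lifts and $w\in\alpha(\Walk_u(\Delta))$, completing the induction and yielding the asserted equality.

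The main obstacle is exactly the behaviour of backward edges. Forward edges lift freely by Definition \ref{defwp}(i), but a backward edge is constrained by the single degree-$n$ path ending at the current vertex supplied by Definition \ref{defwp}(ii), and it is not clear a priori that $v$-liftability implies $u$-liftability. The argument above turns the hypothesised inclusion back on itself: an obstruction to lifting $\lambda^*$ in $\Delta$ manufactures a companion walk $\lambda_0^*\cdot w_{<i}\in\alpha(\Walk_u(\Delta))$ whose required lift in $\Sigma$ competes, at the identical vertex and degree, with the lift of $\lambda^*$ forced by $w\in\beta(\Walk_v(\Sigma))$, and the singleton condition in Definition \ref{defwp}(ii) makes the two demands incompatible unless $\lambda_0=\lambda$.
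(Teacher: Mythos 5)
Your proof is correct, and it shares the paper's overall skeleton: both arguments fix the base vertex via $\alpha(u)=\beta(v)$, induct on the length of the walk, dispose of forward edges using Definition \ref{defwp}(i), and resolve backward edges by playing the inclusion hypothesis against the singleton condition of Definition \ref{defwp}(ii), with Lemma \ref{lemwell} supplying uniqueness of lifts. The difference lies in how the inductive step is organized. The paper propagates the hypothesis to interior vertices: after lifting the prefix to endpoints $u'\in\Delta^{\ob}$ and $v'\in\Sigma^{\ob}$, it asserts (``Clearly'') that $\alpha(\Walk_{u'}(\Delta))\subseteq\beta(\Walk_{v'}(\Sigma))$ and then reapplies its length-one case at the new basepoints; there the unique $\delta\in u'\Delta^m$ is transported directly through the shifted inclusion and matched, via the singleton $v'\Sigma^m$, with the given edge. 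You instead keep everything anchored at the original pair $(u,v)$: when a backward edge $\lambda^*$ threatens not to lift in $\Delta$, you manufacture the companion walk $\lambda_0^*\cdot w_{<i}$, which lifts in $\Delta$ by construction, push it through the original inclusion, and let the singleton $v_{i-1}\Sigma^{d(\lambda)}$ force $\lambda_0=\lambda$, a contradiction. What your route buys is that it makes explicit precisely the concatenation-plus-uniqueness reasoning that the paper's ``Clearly'' conceals (that glossed propagation step needs essentially the same argument you spell out); what the paper's route buys is a direct, contradiction-free reduction to its base case, which keeps the inductive step shorter once the propagation is granted. Both are sound; yours is the more self-contained write-up, the paper's the more modular one.
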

\begin{proof}
Suppose that $\alpha(\Walk_u(\Delta))\subseteq \beta(\Walk_v(\Sigma))$. It follows that $\alpha(u)=\beta(v)$. We have to show that $\beta(\Walk_v(\Sigma))\subseteq \alpha(\Walk_u(\Delta))$. Let $p\in\Walk_v(\Sigma)$. Then $p=y_n\dots y_1$ for some $y_1,\dots, y_n\in \Sigma\cup\Sigma^*$ where $n\geq 1$. We proceed by induction on $n$.

Case $n=1$: Suppose that $p=\sigma$ for some $\sigma\in \Sigma v$. Then $\beta(\sigma)=\lambda$ for some $\lambda\in \Lambda\beta(v)$. Since $(\Delta,\alpha)$ satisfies condition (i) in Definition \ref{defwp}, there is a (unique) $\delta\in \Delta u$ such that $\alpha(\delta)=\lambda$. Hence $\beta(p)=\beta(\sigma)=\lambda=\alpha(\delta)\in \alpha(\Walk_u(\Delta))$.

Suppose now that $p=\sigma^*$ for some $\sigma\in v\Sigma$. Set $m:=d(\sigma)$. Since $(\Delta,\alpha)$ satisfies condition (ii) in Definition \ref{defwp}, there is a $\delta\in u\Delta^m$. Since $\alpha(\Walk_u(\Delta))\subseteq \beta(\Walk_v(\Sigma))$, there is a $\sigma'\in v\Sigma^m$ such that $\alpha(\delta)=\beta(\sigma')$. Clearly $\sigma'=\sigma$ since $(\Sigma,\beta)$ satisfies condition (ii) in Definition \ref{defwp}. Hence $\beta(p)=\beta(\sigma^*)=\alpha(\delta^*)\in \alpha(\Walk_u(\Delta))$.

Case $n\rightarrow n+1$:  Suppose $p=y_{n+1}y_n\dots y_1$. By the induction assumption we know that $\beta(y_n\dots y_1)\in \alpha(\Walk_u(\Delta))$. Hence $\beta(y_n\dots y_1)=\alpha(x_n\dots x_1)$ for some walk $x_n\dots x_1\in \Walk_u(\Delta)$. Set $u':=r(x_n)$ and $v':=r(y_n)$. Clearly $\alpha(\Walk_{u'}(\Delta))\subseteq \beta(\Walk_{v'}(\Sigma))$. Applying the case $n=1$ we obtain that $\beta(y_{n+1})\in \alpha(\Walk_{u'}(\Delta))$. Hence $\beta(y_{n+1})=\alpha(x_{n+1})$ for some $x_{n+1}\in \Walk_{u'}(\Delta)$. Thus $\beta(p)=\beta( y_{n+1}y_n\dots y_1)=\alpha(x_{n+1}x_n\dots x_1)\in \alpha(\Walk_u(\Delta))$.
\end{proof} 

\begin{proposition}\label{thmadm}
Let $(\Delta,\alpha)$ and $(\Sigma,\beta)$ be objects of $\RG(\Lambda)$. Then there is a morphism $\phi:(\Delta,\alpha)\to (\Sigma,\beta)$ if and only if $(\Sigma,\beta)$ is isomorphic to a quotient of $(\Delta,\alpha)$.
\end{proposition}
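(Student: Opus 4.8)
The plan is to prove the two implications separately, the backward one being essentially formal and the forward one carrying all the content.

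For the direction ($\Leftarrow$), I would first observe that every quotient $(\Delta_\approx,\alpha_\approx)$ carries a canonical projection $\pi:(\Delta,\alpha)\to(\Delta_\approx,\alpha_\approx)$ given by $v\mapsto[v]$ and $\delta\mapsto[\delta]$. By the very definitions of source, range, degree and composition in $\Delta_\approx$, this $\pi$ is a degree-preserving functor, and it satisfies $\alpha_\approx\circ\pi=\alpha$, so it is a morphism in $\RG(\Lambda)$. Hence if $(\Sigma,\beta)\cong(\Delta_\approx,\alpha_\approx)$ via an isomorphism $\psi$, then $\psi\circ\pi$ is the desired morphism $(\Delta,\alpha)\to(\Sigma,\beta)$.

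For the direction ($\Rightarrow$), suppose $\phi:(\Delta,\alpha)\to(\Sigma,\beta)$ is a morphism. I would define a relation $\approx$ on $\Delta^{\ob}$ by $u\approx v$ if and only if $\phi(u)=\phi(v)$, and then show that $\approx$ is admissible and that $(\Delta_\approx,\alpha_\approx)\cong(\Sigma,\beta)$. To verify admissibility, note first that $\beta\circ\phi=\alpha$ yields $\alpha(\Walk_u(\Delta))\subseteq\beta(\Walk_{\phi(u)}(\Sigma))$ for every $u\in\Delta^{\ob}$, so Lemma \ref{lempath} upgrades this to the equality $\alpha(\Walk_u(\Delta))=\beta(\Walk_{\phi(u)}(\Sigma))$; consequently $\phi(u)=\phi(v)$ forces $\alpha(\Walk_u(\Delta))=\alpha(\Walk_v(\Delta))$, which is condition (i) of Definition \ref{defadm}. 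For condition (ii), given $u\approx v$, $p\in{}_x\!\Walk_u(\Delta)$, $q\in{}_y\!\Walk_v(\Delta)$ with $\alpha(p)=\alpha(q)$, the walks $\phi(p)$ and $\phi(q)$ in $\Sigma$ share the source $\phi(u)=\phi(v)$ and satisfy $\beta(\phi(p))=\alpha(p)=\alpha(q)=\beta(\phi(q))$; hence Lemma \ref{lemwell}, applied in $(\Sigma,\beta)$, gives $\phi(p)=\phi(q)$ and therefore $\phi(x)=\phi(y)$, i.e. $x\approx y$.

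It then remains to produce an isomorphism, and here I would define $\psi:\Delta_\approx\to\Sigma$ by $\psi([v])=\phi(v)$ and $\psi([\delta])=\phi(\delta)$. Well-definedness on objects is immediate from the definition of $\approx$, while on morphisms it follows from Lemma \ref{lemwell}: if $\delta\approx\delta'$ then $\phi(\delta)$ and $\phi(\delta')$ have equal source and equal $\beta$-image. One checks routinely that $\psi$ is a degree-preserving functor with $\beta\circ\psi=\alpha_\approx$, so it is a morphism in $\RG(\Lambda)$, and injectivity of $\psi$ on both objects and morphisms reduces, again via Lemma \ref{lemwell}, to the definition of $\approx$. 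The crucial point, which I expect to be the main obstacle, is surjectivity: by Proposition \ref{propsurj}, $(\Delta,\phi)$ is a covering of $\Sigma$, and since all $k$-graphs here are nonempty and connected, I would show that the image $\phi(\Delta^{\ob})$ is closed under the adjacency relation generating connectedness of $\Sigma$ — lifting, via covering conditions (i) and (ii), any edge of $\Sigma$ incident to a point of the image back to an edge of $\Delta$ — so that $\phi(\Delta^{\ob})=\Sigma^{\ob}$; surjectivity on morphisms then follows from covering condition (ii). Finally, a bijective degree-preserving functor has a degree-preserving functor inverse, so $\psi$ is an isomorphism in $\RG(\Lambda)$, exhibiting $(\Sigma,\beta)$ as isomorphic to the quotient $(\Delta_\approx,\alpha_\approx)$ of $(\Delta,\alpha)$.
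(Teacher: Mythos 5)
Your proposal is correct and follows essentially the same route as the paper: the same equivalence relation $u\approx v\Leftrightarrow\phi(u)=\phi(v)$, the same admissibility checks via Lemmas \ref{lempath} and \ref{lemwell}, the same induced map $\psi:\Delta_\approx\to\Sigma$, and the same appeal to Proposition \ref{propsurj} for the converse-free direction. The only difference is one of detail: where the paper compresses bijectivity into ``in view of Proposition \ref{propsurj}, $\psi$ is bijective,'' you spell out the covering-plus-connectedness argument for surjectivity, which is exactly what that terse step relies on.
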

\begin{proof}
$(\Rightarrow)$ Suppose there is a morphism $\phi:(\Delta,\alpha)\to (\Sigma,\beta)$. If $u,v\in \Delta^{\ob}$, we write $u\approx v$ if $\phi(u)=\phi(v)$. Clearly $\approx$ defines an equivalence relation on $\Delta^{\ob}$. Below we check that $\approx$ is admissible. 
\begin{enumerate}[(i)]
\item Suppose $u\approx v$. Then $\alpha(\Walk_u(\Delta))=\beta(\Walk_{\phi(u)}(\Sigma))=\beta(\Walk_{\phi(v)}(\Sigma))=\alpha(\Walk_v(\Delta))$ by Lemma \ref{lempath}. Hence $u\sim v$.
\item Suppose $u\approx v$, $p\in{} _x\!\Walk_u(\Delta)$, $q\in{} _y\!\Walk_v(\Delta)$ and $\alpha(p)=\alpha(q)$. Clearly $\phi(p)\in{} _{\phi(x)}\!\Walk_{\phi(u)}(\Sigma)$ and $\phi(q)\in{} _{\phi(y)}\!\Walk_{\phi(v)}(\Sigma)$. Moreover, $\beta(\phi(p))=\alpha(p)=\alpha(q)=\beta(\phi(q))$. Since $\phi(u)=\phi(v)$, it follows from Lemma \ref{lemwell} that $\phi(p)=\phi(q)$. Hence $\phi(x)=r(\phi(p))=r(\phi(q))=\phi(y)$ and therefore $x\approx y$.
\end{enumerate}

Note that by Lemma \ref{lemwell} we have $\delta\approx \delta'$ if and only if $\phi(\delta)=\phi(\delta')$, for any $\delta,\delta'\in \Delta$. Define a $k$-graph morphism $\psi:\Delta_\approx\to \Sigma$ by $\psi([v])=\phi(v)$ and $\psi([\delta])=\phi(\delta)$ for any $v\in\Delta^{\ob}$ and $\delta\in\Delta$. Clearly $\beta\circ\psi=\alpha_{\approx}$ and therefore $\psi:(\Delta_\approx,\alpha_\approx)\to (\Sigma,\beta)$ is a morphism in $\RG(\Lambda)$. In view of Proposition \ref{propsurj}, $\psi$ is bijective and hence $\psi$ is an isomorphism.

$(\Leftarrow)$ Suppose now that $(\Sigma,\beta)\cong (\Delta_\approx,\alpha_\approx)$ for some admissible equivalence relation $\approx$ on $\Delta^{\ob}$. In order to show that there is a morphism $\alpha:(\Delta,\alpha)\to (\Sigma,\beta)$ it suffices to show that there is a morphism $\beta:(\Delta,\alpha)\to (\Delta_\approx,\alpha_\approx)$. But this is obvious (define $\beta(v)=[v]$ and $\beta(\delta)=[\delta]$).
\end{proof}

\subsection{The connected components of the category $\RG(\Lambda)$} \label{subsecnn}
Recall that any category $\mathcal{C}$ can be written as a disjoint union (or coproduct) of a collection of connected categories, which are called the {\it connected components} of $\mathcal{C}$. Each connected component is a full subcategory of $\mathcal{C}$.

\begin{lemma}\label{lemrepell}
Let $(\Delta,\alpha)$ and $(\Sigma,\beta)$ be objects of $\RG(\Lambda)$ and suppose there is a morphism $(\Delta,\alpha)\to(\Sigma,\beta)$ or a morphism $(\Sigma,\beta)\to(\Delta,\alpha)$. Let $(\Omega,\tau)$ be a universal covering of $\Delta$. Then there is a $k$-graph morphism $\eta:\Omega\to\Sigma$ such that $(\Omega,\eta)$ is a universal covering of $\Sigma$ and $\eta:(\Omega,\alpha\circ \tau)\to(\Sigma,\beta)$ is a morphism in $\RG(\Lambda)$.
\end{lemma}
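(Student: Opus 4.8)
The plan is to handle the two alternative hypotheses separately, but to reduce each to the same core fact. In both cases I will produce a $k$-graph morphism $\eta:\Omega\to\Sigma$ satisfying the single identity $\beta\circ\eta=\alpha\circ\tau$. Once this identity is in hand, $\eta$ is a morphism $(\Omega,\alpha\circ\tau)\to(\Sigma,\beta)$ in $\RG(\Lambda)$ --- here one uses that $(\Omega,\alpha\circ\tau)$ is itself an object of $\RG(\Lambda)$, which is exactly Lemma \ref{hfgfghhffeee} applied to the covering $(\Omega,\tau)$ of $\Delta$ --- and then Proposition \ref{propsurj} gives for free that $(\Omega,\eta)$ is a covering of $\Sigma$. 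Thus the only genuinely new content to verify is the \emph{universality} of this covering, which I will obtain from the fact that $\Omega$ is simply connected.

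Suppose first that there is a morphism $\phi:(\Delta,\alpha)\to(\Sigma,\beta)$. Then I would simply set $\eta:=\phi\circ\tau$. Since $\phi$ is a morphism in $\RG(\Lambda)$ we have $\beta\circ\phi=\alpha$, and therefore $\beta\circ\eta=\beta\circ\phi\circ\tau=\alpha\circ\tau$, as desired.

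Suppose instead that there is a morphism $\psi:(\Sigma,\beta)\to(\Delta,\alpha)$. By Proposition \ref{propsurj}, $(\Sigma,\psi)$ is a covering of $\Delta$. Since $(\Omega,\tau)$ is a universal covering of $\Delta$, the defining property of a universal covering supplies a morphism of coverings $(\Omega,\tau)\to(\Sigma,\psi)$, i.e. a $k$-graph morphism $\eta:\Omega\to\Sigma$ with $\psi\circ\eta=\tau$. Using that $\psi$ is a morphism in $\RG(\Lambda)$, so that $\alpha\circ\psi=\beta$, I obtain $\beta\circ\eta=\alpha\circ\psi\circ\eta=\alpha\circ\tau$, again as desired.

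It remains to see, in both cases, that the covering $(\Omega,\eta)$ of $\Sigma$ is universal, and for this I would pass to the correspondence between coverings and fundamental groups recalled in Subsection \ref{hnhfgftgrgr}. Fix $v\in\Omega^{\ob}$. Because $(\Omega,\tau)$ is a universal covering of $\Delta$, it corresponds to the trivial subgroup of $\pi(\Delta,\tau(v))$; since $\tau_*$ is injective, this forces $\pi(\Omega,v)$ to be trivial, so $\Omega$ is simply connected. Consequently $\eta_*(\pi(\Omega,v))$ is the trivial subgroup of $\pi(\Sigma,\eta(v))$, so under the $1$--$1$ correspondence of \cite{PQR} the covering $(\Omega,\eta)$ corresponds to the trivial subgroup and is hence a universal covering of $\Sigma$. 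I expect this last paragraph to be the main obstacle: the diagram chases producing $\eta$ are routine, and the real work is to invoke the correspondence of \cite{PQR} together with the injectivity of the induced maps on fundamental groups carefully enough to conclude that ``universal covering'' and ``simply connected total space'' coincide in this setting.
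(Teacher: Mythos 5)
Your proof is correct, and in the second case it takes a genuinely different route from the paper's. For the case of a morphism $\phi:(\Delta,\alpha)\to(\Sigma,\beta)$ the two arguments coincide: set $\eta=\phi\circ\tau$, apply Proposition \ref{propsurj}, and get universality from the fundamental-group characterization in \cite[Theorem 2.7]{PQR} (the paper pushes the trivial subgroup $\tau_*\pi(\Omega,v)$ forward through $\phi_*$, while you first deduce from injectivity of $\tau_*$ that $\pi(\Omega,v)$ itself is trivial and then note that $\eta_*$ of a trivial group is trivial; this is the same computation). For the case of a morphism $\psi:(\Sigma,\beta)\to(\Delta,\alpha)$, the paper picks an auxiliary universal covering $(\Omega',\tau')$ of $\Sigma$, applies the first case to see that $(\Omega',\psi\circ\tau')$ is a universal covering of $\Delta$, and then invokes the uniqueness of universal coverings (\cite[Theorems 2.2, 2.7]{PQR}) to obtain an isomorphism $\gamma:\Omega\to\Omega'$, finally setting $\eta=\tau'\circ\gamma$. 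You avoid both the auxiliary covering and the uniqueness theorem: Proposition \ref{propsurj} applied to $\psi$ makes $(\Sigma,\psi)$ a covering of $\Delta$, the defining property of the universal covering $(\Omega,\tau)$ hands you $\eta$ with $\psi\circ\eta=\tau$, the identity $\beta\circ\eta=\alpha\circ\psi\circ\eta=\alpha\circ\tau$ makes $\eta$ a morphism in $\RG(\Lambda)$, and universality of $(\Omega,\eta)$ follows by the same simple-connectedness argument as in the first case. Your route is more economical, since it uses only the existence half of the covering theory (the repelling property of a universal covering plus the subgroup characterization), both of which the paper's proof needs anyway; what the paper's detour buys is merely that the second case recycles the first case verbatim. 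One point you should make explicit: \cite[Theorem 2.7]{PQR}, as quoted in the paper, yields triviality of $\tau_*\pi(\Omega,v)$ at \emph{some} vertex $v$; since $\Omega$ is connected and the subgroups associated to a covering at different base vertices are conjugate, triviality at one vertex propagates to the arbitrary $v$ you fixed, so your phrasing is harmless but deserves a sentence.
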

\begin{proof}
First suppose that there is a morphism $\phi:(\Delta,\alpha)\rightarrow(\Sigma,\beta)$. Since the diagram
\[\xymatrix@C=15pt@R=20pt{&\Omega\ar[dl]_{\tau}\ar[dr]^{\phi\circ \tau}&\\\Delta\ar[dr]_{\alpha}\ar[rr]^{\phi}&&\Sigma\ar[dl]^{\beta}\\&\Lambda&}\]
commutes, $\phi\circ \tau:(\Omega,\alpha\circ \tau)\to(\Sigma,\beta)$ is a morphism in $\RG(\Lambda)$. It follows from Proposition \ref{propsurj}, that $(\Omega,\phi\circ \tau)$ is a covering of $\Sigma$. By \cite[Theorem 2.7]{PQR} there is an $x\in \Delta^{\ob}$ and a $v\in \tau^{-1}(x)$ such that $\tau_*\pi(\Omega,v)=\{x\}$. Hence 
\[(\phi\circ \tau)_*\pi(\Omega,v)=\phi_*(\tau_*\pi(\Omega,v))=\phi_*(\{x\})=\{\phi(x)\}.\]
It follows that $(\Omega,\phi\circ \tau)$ is a universal covering of $\Sigma$, again by \cite[Theorem 2.7]{PQR}. 
\\
Suppose now that there is a morphism $\phi:(\Sigma,\beta)\rightarrow (\Delta,\alpha)$. Let $(\Omega',\tau')$ be a universal covering of $\Sigma$. Then $(\Omega',\phi\circ \tau')$ is a universal covering of $\Delta$ by the previous paragraph. It follows from \cite[Theorems 2.2, 2.7]{PQR} that $(\Omega, \tau)\cong (\Omega', \phi\circ \tau')$, i.e. there is a $k$-graph isomorphism $\gamma:\Omega\to \Omega'$ making the diagram
\[\xymatrix@C=15pt@R=20pt{\Omega\ar[d]_{\tau}\ar[rr]^{\gamma}&&\Omega'\ar[d]^
{\tau'}\\\Delta\ar[dr]_{\alpha}&&\Sigma\ar[dl]^{\beta}\ar[ll]_{\phi}\\&\Lambda&}\]
commute. It follows that $\tau'\circ\gamma:(\Omega,\alpha\circ \tau)\to(\Sigma,\beta)$ is a morphism in $\RG(\Lambda)$. One checks easily that $(\Omega,\tau'\circ\gamma)$ is a universal covering of $\Sigma$. 
\end{proof}

Let $C$ be a connected component of $\RG(\Lambda)$. Choose an object $(\Delta,\alpha)$ of $C$ and a universal covering $(\Omega,\tau)$ of $\Delta$. By Lemma \ref{hfgfghhffeee}, $(\Omega,\alpha\circ\tau)$ is an object of $C$. We set 
\[(\Omega_C,\zeta_C):=(\Omega,\alpha\circ\tau)~\text{ and }~(\Gamma_C,\xi_C):=((\Omega_C)_\sim,(\zeta_C)_\sim).\]
We call an object $X$ in a category $\mathcal{C}$ {\it repelling} (resp. {\it attracting}) if for any object $Y$ in $\mathcal{C}$ there is a morphism $X\rightarrow Y$ (resp. $Y\rightarrow X$). 

\begin{theorem}\label{thmm1}
Let $C$ be a connected component of $\RG(\Lambda)$. Then $(\Omega_C,\zeta_C)$ is a repelling object of $C$, and consequently the objects of $C$ are up to isomorphism precisely the quotients of $(\Omega_C,\zeta_C)$.
\end{theorem}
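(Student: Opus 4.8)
The plan is to show that $(\Omega_C,\zeta_C)$ admits a morphism to every object of $C$; the final clause about quotients then follows immediately from Proposition \ref{thmadm}, which tells us that the existence of a morphism $(\Omega_C,\zeta_C)\to Y$ is equivalent to $Y$ being isomorphic to a quotient of $(\Omega_C,\zeta_C)$. So the entire content of the theorem reduces to the repelling property.

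First I would recall how $(\Omega_C,\zeta_C)$ was constructed: we picked some object $(\Delta,\alpha)$ of $C$ and a universal covering $(\Omega,\tau)$ of $\Delta$, and set $(\Omega_C,\zeta_C)=(\Omega,\alpha\circ\tau)$. The key observation is that Lemma \ref{lemrepell} is tailor-made for this situation. Let $Y=(\Sigma,\beta)$ be an arbitrary object of $C$. Since $C$ is a \emph{connected} component, there is a finite zig-zag of morphisms in $\RG(\Lambda)$ connecting $(\Delta,\alpha)$ to $(\Sigma,\beta)$, i.e. a chain of objects in which consecutive objects are joined by a morphism in one direction or the other. I would proceed by induction on the length of this zig-zag, using Lemma \ref{lemrepell} at each step.

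For the inductive step, suppose we have already produced a universal covering $(\Omega,\eta)$ of some intermediate object $(\Delta',\alpha')$ with $\eta:(\Omega,\alpha\circ\tau)\to(\Delta',\alpha')$ a morphism in $\RG(\Lambda)$, and suppose the next object $(\Delta'',\alpha'')$ in the zig-zag is joined to $(\Delta',\alpha')$ by a morphism in one of the two directions. Then Lemma \ref{lemrepell}, applied with $(\Omega,\eta)$ in the role of the universal covering of $(\Delta',\alpha')$, yields a $k$-graph morphism $\eta':\Omega\to\Delta''$ such that $(\Omega,\eta')$ is a universal covering of $\Delta''$ and $\eta':(\Omega,\alpha\circ\tau)\to(\Delta'',\alpha'')$ is a morphism in $\RG(\Lambda)$. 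The crucial point is that the source $k$-graph $\Omega$ stays the same throughout, and the accompanying map into $\Lambda$ remains $\alpha\circ\tau=\zeta_C$; only the covering map $\Omega\to(\text{current object})$ changes. Running the induction to the end of the zig-zag produces a morphism $(\Omega_C,\zeta_C)\to(\Sigma,\beta)$, as desired.

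The main obstacle, and the reason Lemma \ref{lemrepell} is indispensable, is that at each step the object to which $\Omega$ maps changes, so one cannot merely compose a single fixed universal-covering map with the zig-zag morphisms; when a zig-zag arrow points \emph{into} the current object one would otherwise be stuck, since morphisms cannot be inverted. Lemma \ref{lemrepell} resolves exactly this by guaranteeing that a \emph{universal} covering of $\Delta'$ is simultaneously (via a new covering map) a universal covering of $\Delta''$, in both the case where the connecting morphism goes $(\Delta',\alpha')\to(\Delta'',\alpha'')$ and the case where it goes $(\Delta'',\alpha'')\to(\Delta',\alpha')$. I should take care to verify that the lemma's hypothesis is met at the base case, namely that $(\Omega,\tau)$ is a universal covering of $\Delta$ and $\tau:(\Omega,\alpha\circ\tau)\to(\Delta,\alpha)$ is a morphism in $\RG(\Lambda)$, which holds by construction and by Lemma \ref{hfgfghhffeee}. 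Once the repelling property is established, I would close by invoking Proposition \ref{thmadm} to conclude that every object of $C$ is, up to isomorphism, a quotient of $(\Omega_C,\zeta_C)$.
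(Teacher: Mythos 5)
Your proposal is correct and follows essentially the same route as the paper's proof: an induction along a zig-zag of morphisms connecting the chosen base object $(\Delta,\alpha)$ to an arbitrary object of $C$, applying Lemma \ref{lemrepell} at each step while keeping $\Omega$ and the composite map $\zeta_C=\alpha\circ\tau$ fixed, and then invoking Proposition \ref{thmadm} for the statement about quotients. The paper phrases the induction via a chain $(\Delta_0,\alpha_0),\dots,(\Delta_n,\alpha_n)$ with maps $\eta_i:\Omega\to\Delta_i$ and a single commutative diagram, but the content is identical to yours.
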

\begin{proof}
Let $(\Sigma,\gamma)$ be an object of $C$. Then there is a sequence of objects \[(\Delta,\alpha)=(\Delta_0,\alpha_0),(\Delta_1,\alpha_1),\dots,(\Delta_{n-1},\alpha_{n-1}),(\Delta_n,\alpha_n)=(\Sigma,\gamma)\]
of $C$ such that for each $0\leq i\leq n-1$ there is a morphism $(\Delta_i,\alpha_i)\to(\Delta_{i+1},\alpha_{i+1})$ or a morphism $(\Delta_{i+1},\alpha_{i+1})\to(\Delta_{i},\alpha_{i})$. Set $\eta_0:=\tau$. By inductively applying Lemma \ref{lemrepell} we obtain $k$-graph morphisms $\eta_1:\Omega\to \Delta_1,~\eta_2:\Omega\to\Delta_2,~\dots~,\eta_n:\Omega\to\Delta_n$ such that for any $1\leq i\leq n$, $(\Omega,\eta_i)$ is a universal covering of $\Delta_i$ and $\eta_i:(\Omega,\alpha_{i-1}\circ \eta_{i-1})\to (\Delta_{i},\alpha_{i})$ is a morphism in $\RG(\Lambda)$. Since the diagram
\[\xymatrix@C=45pt@R=40pt{&&\Omega\ar[dll]_(0.6){\eta_0}\ar[dl]_(0.6){\eta_1}\ar[dr]^(0.6){\eta_{n-1}}\ar[drr]^(0.6){\eta_n}&&\\\Delta_0\ar[drr]_(0.3){\alpha_0}&\Delta_1\ar[dr]_(0.3){\alpha_1}&\dots&\Delta_{n-1}\ar[dl]^(0.3){\alpha_{n-1}}&\Delta_n\ar[dll]^(0.3){\alpha_n}\\&&\Lambda&&}\]
is commutative, we obtain that $\eta_n:(\Omega_C,\zeta_C)=(\Omega,\alpha_0\circ\eta_0)\to (\Delta_n,\alpha_n)=(\Sigma,\gamma)$ is a morphism in $\RG(\Lambda)$. Thus $(\Omega_C,\zeta_C)$ is a repelling object of $C$. The second statement now follows from Proposition \ref{thmadm}.
\end{proof}

\begin{theorem}\label{thmm2}
Let $C$ be a connected component of $\RG(\Lambda)$. Then $(\Gamma_C,\xi_C)$ is an attracting object of $C$, and consequently the objects of $C$ are precisely the representation $k$-graphs $(\Sigma,\xi_C\circ \eta)$ where $(\Sigma,\eta)$ is a covering of $\Gamma_C$. %$\Ob(C)=\{(\Sigma,\xi_C\circ \eta)\mid (\Sigma,\eta)\text{ is a covering of } \Gamma_C\}$. 
\end{theorem}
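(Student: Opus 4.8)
The plan is to prove the two assertions in turn, relying entirely on the machinery already developed: $(\Omega_C,\zeta_C)$ is a repelling object (Theorem \ref{thmm1}), existence of a morphism is equivalent to being a quotient (Proposition \ref{thmadm}), the admissible relations form a bounded lattice with maximal element $\sim$ (Lemma \ref{lemlattice}), and coarsening an admissible relation yields a morphism between the corresponding quotients (Lemma \ref{lemcompare}). The conceptual point is that $(\Gamma_C,\xi_C)=((\Omega_C)_\sim,(\zeta_C)_\sim)$ is the quotient of the repelling object by the \emph{maximal} admissible relation, and it is exactly this maximality that forces every object of $C$ to admit a morphism into it. I would first record that $(\Gamma_C,\xi_C)$ is itself an object of $C$: being a quotient of $(\Omega_C,\zeta_C)$, Proposition \ref{thmadm} provides a morphism $(\Omega_C,\zeta_C)\to(\Gamma_C,\xi_C)$, and $(\Omega_C,\zeta_C)\in C$ by construction, so ``attracting object of $C$'' is meaningful.

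To see that $(\Gamma_C,\xi_C)$ is attracting, let $(\Sigma,\gamma)$ be an arbitrary object of $C$. Since $(\Omega_C,\zeta_C)$ is repelling there is a morphism $(\Omega_C,\zeta_C)\to(\Sigma,\gamma)$, so by Proposition \ref{thmadm} we have $(\Sigma,\gamma)\cong((\Omega_C)_\approx,(\zeta_C)_\approx)$ for some admissible equivalence relation $\approx$ on $\Omega_C^{\ob}$. Being admissible, $\approx\,\leq\,\sim$ by Lemma \ref{lemlattice}, so Lemma \ref{lemcompare} supplies a morphism $((\Omega_C)_\approx,(\zeta_C)_\approx)\to((\Omega_C)_\sim,(\zeta_C)_\sim)=(\Gamma_C,\xi_C)$. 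Composing with the isomorphism produces a morphism $(\Sigma,\gamma)\to(\Gamma_C,\xi_C)$, which is precisely what attracting requires.

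For the ``consequently'' clause I would argue both inclusions. If $(\Sigma,\gamma)$ is an object of $C$, the attracting property just proved gives a morphism $\eta:(\Sigma,\gamma)\to(\Gamma_C,\xi_C)$ in $\RG(\Lambda)$, which by the definition of a morphism means $\xi_C\circ\eta=\gamma$; moreover Proposition \ref{propsurj} shows that $(\Sigma,\eta)$ is a covering of $\Gamma_C$. Hence $(\Sigma,\gamma)=(\Sigma,\xi_C\circ\eta)$ has the asserted form. Conversely, if $(\Sigma,\eta)$ is any covering of $\Gamma_C$, then $(\Sigma,\xi_C\circ\eta)$ is a representation $k$-graph for $\Lambda$ by the second part of Lemma \ref{hfgfghhffeee}; and since $\xi_C\circ\eta=\xi_C\circ\eta$ holds trivially, $\eta$ is a morphism $(\Sigma,\xi_C\circ\eta)\to(\Gamma_C,\xi_C)$ in $\RG(\Lambda)$, placing $(\Sigma,\xi_C\circ\eta)$ in the same connected component as $(\Gamma_C,\xi_C)$, namely $C$.

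The proof is in effect an assembly of the preceding results, so I do not anticipate a genuine obstacle; the two steps needing care are the invocation of the maximality in Lemma \ref{lemlattice} to guarantee $\approx\,\leq\,\sim$ for the relation attached to an arbitrary quotient of $(\Omega_C,\zeta_C)$, and the twofold use of Proposition \ref{propsurj} to pass back and forth between morphisms landing in $(\Gamma_C,\xi_C)$ and coverings of $\Gamma_C$. The only subtlety worth stating explicitly is the preliminary observation that $(\Gamma_C,\xi_C)$ lies in $C$, without which the assertion would not even type-check.
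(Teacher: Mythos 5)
Your proof is correct and follows essentially the same route as the paper, whose own proof is just a two-line citation of the same ingredients: the attracting property from Theorem \ref{thmm1} together with Lemma \ref{lemcompare} (your explicit use of Proposition \ref{thmadm} and Lemma \ref{lemlattice} merely unpacks what those citations compress), and the ``consequently'' clause from Lemma \ref{hfgfghhffeee} and Proposition \ref{propsurj}. You have simply written out in full the details the paper leaves to the reader.
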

\begin{proof}
The first statement of the theorem follows from Lemma \ref{lemcompare} and Theorem \ref{thmm1}. The second statement now follows from Lemma \ref{hfgfghhffeee} and Proposition \ref{propsurj}. 
\end{proof}

\begin{corollary}\label{corm2}
Let $C$ be a connected component of $\RG(\Lambda)$. Then up to isomorphism $(\Gamma_C,\xi_C)$ is the unique irreducible representation $k$-graph in $C$. %$\Ob(C)=\{(\Sigma,\xi_C\circ \eta)\mid (\Sigma,\eta)\text{ is a covering of } \Gamma_C\}$. 
\end{corollary}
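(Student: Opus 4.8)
The plan is to reformulate irreducibility in terms of the equivalence relation $\sim$ and then exploit the fact that $(\Gamma_C,\xi_C)$ is obtained from $(\Omega_C,\zeta_C)$ by collapsing exactly along $\sim$. Observe first that, by Definition \ref{defrepirres}, an object $(\Delta,\alpha)$ of $\RG(\Lambda)$ is irreducible precisely when the relation $\sim$ on $\Delta^{\ob}$ (given by $u\sim v$ iff $\alpha(\Walk_u(\Delta))=\alpha(\Walk_v(\Delta))$) is the equality relation. Thus two things must be shown: that $(\Gamma_C,\xi_C)$ is irreducible, and that any irreducible object of $C$ is isomorphic to it.

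The technical heart of both statements is the following claim: for any object $(\Delta,\alpha)$ of $\RG(\Lambda)$, any admissible equivalence relation $\approx$ on $\Delta^{\ob}$, and any $u\in\Delta^{\ob}$, one has $\alpha(\Walk_u(\Delta))=\alpha_\approx(\Walk_{[u]_\approx}(\Delta_\approx))$. To see this, let $\pi:\Delta\to\Delta_\approx$ be the quotient map; it is a morphism $(\Delta,\alpha)\to(\Delta_\approx,\alpha_\approx)$ in $\RG(\Lambda)$, hence a covering by Proposition \ref{propsurj}. Since $\pi$ carries $\Walk_u(\Delta)$ into $\Walk_{[u]_\approx}(\Delta_\approx)$ and $\alpha_\approx\circ\pi=\alpha$, we obtain the inclusion $\alpha(\Walk_u(\Delta))\subseteq\alpha_\approx(\Walk_{[u]_\approx}(\Delta_\approx))$, and Lemma \ref{lempath} upgrades this to equality.

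Granting the claim, irreducibility of $(\Gamma_C,\xi_C)=((\Omega_C)_\sim,(\zeta_C)_\sim)$ follows at once: if $[u]\sim[v]$ in $\Gamma_C^{\ob}$, then $\xi_C(\Walk_{[u]}(\Gamma_C))=\xi_C(\Walk_{[v]}(\Gamma_C))$, and by the claim (with $\approx$ taken to be $\sim$) this reads $\zeta_C(\Walk_u(\Omega_C))=\zeta_C(\Walk_v(\Omega_C))$, i.e. $u\sim v$ in $\Omega_C^{\ob}$, i.e. $[u]=[v]$. Hence $\sim$ is the equality relation on $\Gamma_C^{\ob}$, so $(\Gamma_C,\xi_C)$ is irreducible.

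For uniqueness, let $(\Sigma,\gamma)$ be any irreducible object of $C$. Since $(\Omega_C,\zeta_C)$ is repelling in $C$ by Theorem \ref{thmm1}, there is a morphism $(\Omega_C,\zeta_C)\to(\Sigma,\gamma)$, so by Proposition \ref{thmadm} we may assume $(\Sigma,\gamma)=((\Omega_C)_\approx,(\zeta_C)_\approx)$ for some admissible $\approx$ on $\Omega_C^{\ob}$; admissibility already gives $\approx\,\leq\,\sim$. It remains to prove $\sim\,\leq\,\approx$. If $u\sim v$ in $\Omega_C^{\ob}$, then $\zeta_C(\Walk_u(\Omega_C))=\zeta_C(\Walk_v(\Omega_C))$, which by the claim equals $(\zeta_C)_\approx(\Walk_{[u]_\approx}(\Sigma))=(\zeta_C)_\approx(\Walk_{[v]_\approx}(\Sigma))$; since $(\Sigma,\gamma)$ is irreducible, its associated relation is equality, forcing $[u]_\approx=[v]_\approx$, i.e. $u\approx v$. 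Therefore $\approx\,=\,\sim$ and $(\Sigma,\gamma)\cong((\Omega_C)_\sim,(\zeta_C)_\sim)=(\Gamma_C,\xi_C)$. I expect the main obstacle to be the claim relating the images of walk-sets before and after passing to a quotient; once Lemma \ref{lempath} is recognised as exactly the device that converts the obvious inclusion into an equality, the rest is bookkeeping with the lattice of admissible relations from Lemma \ref{lemlattice}.
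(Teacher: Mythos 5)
Your proposal is correct, but it routes the uniqueness argument through the repelling object rather than the attracting one, which is genuinely different from the paper. The paper's proof is two lines: by Theorem \ref{thmm2} there is a morphism $(\Sigma,\gamma)\to(\Gamma_C,\xi_C)$, so by Proposition \ref{thmadm} the object $(\Gamma_C,\xi_C)$ is isomorphic to a quotient of $(\Sigma,\gamma)$; since $(\Sigma,\gamma)$ is irreducible, its relation $\sim$ is equality, hence the \emph{only} admissible relation on $\Sigma^{\ob}$ is equality and the quotient is $(\Sigma,\gamma)$ itself. You instead start from Theorem \ref{thmm1}: $(\Sigma,\gamma)$ is a quotient $((\Omega_C)_\approx,(\zeta_C)_\approx)$ of the repelling object, and you must then identify $\approx$ with $\sim$, which requires your ``claim'' that $\alpha(\Walk_u(\Delta))=\alpha_\approx(\Walk_{[u]_\approx}(\Delta_\approx))$ — a fact the paper effectively proves inside Proposition \ref{prop41.5} (a morphism forces equality of walk-images, via Lemma \ref{lempath}), so it is sound but is machinery the paper's proof of this corollary avoids. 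What your route buys: it does not invoke Theorem \ref{thmm2} at all (only Theorem \ref{thmm1}), and the same claim simultaneously yields the irreducibility of $(\Gamma_C,\xi_C)$, which the paper leaves to the reader; so your write-up is longer but more self-contained, proving both halves of the corollary from one lemma. What the paper's route buys: brevity, by exploiting the observation that an irreducible object admits no nontrivial quotients, so any morphism out of it in $\RG(\Lambda)$ is an isomorphism.
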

\begin{proof}
We leave it to the reader to check that $(\Gamma_C,\xi_C)$ is irreducible. Let now $(\Sigma,\gamma)$ be an irreducible representation $k$-graph in $C$. It follows from Proposition \ref{thmadm} and Theorem \ref{thmm2} that $(\Gamma_C,\xi_C)$ is isomorphic to a quotient of $(\Sigma,\gamma)$. But since $(\Sigma,\gamma)$ is irreducible, there is only one admissible equivalence relation on $\Sigma^{\ob}$, namely the equality relation $=$, and the corresponding quotient $(\Sigma_{=},\gamma_{=})$ is isomorphic to $(\Sigma,\gamma)$.
\end{proof}

\section{Modules for Kumjian-Pask algebras via representation $k$-graphs}
In this section $\Lambda$ denotes a fixed $k$-graph. 
\subsection{Kumjian-Pask algebras}
For each $\lambda\in\Lambda$ of degree $\neq 0$ we introduce a symbol $\lambda^*$. For each $\lambda\in\Lambda^0$ we set $\lambda^*:=\lambda$.

\begin{definition}\label{defKP}
The $K$-algebra $\KP(\Lambda)$ presented by the generating set $\Lambda\cup \Lambda^*$ and the relations 
\begin{enumerate}[(KP1)]
\item $\lambda\mu = \delta_{s(\lambda),r(\mu)}(\lambda\circ\mu)$ for any $\lambda,\mu\in\Lambda$,
\smallskip 
\item $\mu^*\lambda^*=\delta_{s(\lambda),r(\mu)}(\lambda\circ\mu)^*$ for any $\lambda,\mu\in\Lambda$,
\smallskip 
\item $\lambda^*\mu=\delta_{\lambda,\mu}1_{s(\lambda)}$ for any $\lambda,\mu\in\Lambda$ with $d(\lambda)=d(\mu)$, 
\smallskip 
\item  $\sum\limits_{\lambda\in v\Lambda^n}\lambda\lambda^*=1_v$ for any $v\in\Lambda^{\ob}$ and $n\in\mathbb{N}^k$
\end{enumerate}
is called the {\it Kumjian-Pask algebra} of $\Lambda$.
\end{definition}

We may view the walks in $\Lambda$ as monomials in $\KP(\Lambda)$. Clearly any element of $\KP(\Lambda)$ is a $K$-linear combination of walks. 

\begin{remark}
The algebra $\KP(\Lambda)$ defined in Definition \ref{defKP} above is isomorphic to the algebra $\KP_K(\Lambda)$ defined in \cite[Definition 6.1]{ahls}. Note that the relations 
\[r(\lambda)\lambda=\lambda=\lambda s(\lambda)\text{ and }s(\lambda)\lambda^*=\lambda^*=\lambda^* r(\lambda)\text{ for any }\lambda\in\Lambda\]
in \cite[Definition 6.1]{ahls} are redundant. % since they follow from the other relations in \cite[Definition 6.1]{ahls}.
\end{remark}

%It follows from \cite[Theorem 3.4]{pchr}) that $\KP(\Lambda)=\{\sum_{i=1}^nk_i\lambda_i\mu_i^*\mid n\in \N,~k_i\in K,~\lambda,\mu\in\Lambda\}$.

\subsection{The functor $V$}
For an object $(\Delta,\alpha)$ of $\RG(\Lambda)$, let $V_{(\Delta,\alpha)}$ be the $K$-vector space with basis $\Delta^{\ob}$. %We show that there is a natural action of the Kumjian-Pask algebra $\KP(\Lambda)$ on the $K$-vector space $V_{(\Delta,\alpha)}$, resulting $V_{(\Delta,\alpha)}$ to be a $\KP(\Lambda)$-module.
For any $\lambda\in \Lambda$ define two endomorphisms $\sigma_{\lambda}$ and $\sigma_{\lambda^*}\in \End_K(V_{(\Delta,\alpha)})$ by
\begin{align*}
\sigma_{\lambda}(v)     &=\begin{cases}r(\delta),\quad                       &\text{if }\exists \delta\in \Delta v\text{ such that }\alpha(\delta)=\lambda\\0,     & \text{otherwise} \end{cases},\\
\sigma_{\lambda^*}(v)  &=\begin{cases}s(\delta),\quad                        &\text{if }\exists \delta\in v\Delta\text{ such that }\alpha(\delta)=\lambda\\0,    & \text{otherwise} \end{cases},
\end{align*}
where $v\in \Delta^{\ob}$. Note that $\sigma_{\lambda}$ and $\sigma_{\lambda^*}$ are well-defined since for any $v\in \Delta^{\ob}$ the maps $\alpha|_{\Delta v}$ and $\alpha|_{v\Delta}$ are injective. One checks routinely that there is an algebra homomorphism 
$\pi:\KP(\Lambda)\longrightarrow \End_K(V_{(\Delta,\alpha)})$ such that $\pi(\lambda)=\sigma_{\lambda}$ and $\pi(\lambda^*)=\sigma_{\lambda^*}$ for any $\lambda\in\Lambda$. %We refer to this representation as the {\it representation of $\KP(\Lambda)$ defined by $(\Delta,\phi)$}. 
Clearly $V_{(\Delta,\alpha)}$ becomes a left $\KP(\Lambda)$-module by defining $a.x:= \pi(a)(x)$ for any $a\in \KP(\Lambda)$ and $x\in V_{(\Delta,\alpha)}$. We call $V_{(\Delta,\alpha)}$ the {\it $\KP(\Lambda)$-module defined by $(\Delta,\alpha)$}. A morphism $\phi:(\Delta,\alpha)\rightarrow(\Sigma,\beta)$ in $\RG(\Lambda)$ induces a surjective $\KP(\Lambda)$-module homomorphism $V_\phi:V_{(\Delta,\alpha)}\rightarrow V_{(\Sigma,\beta)}$ such that $V_\phi(u)=\phi(u)$ for any $u\in \Delta^{\ob}$. This gives rise to a functor 
\[V:\RG(\Lambda)\to \Modd(\KP(\Lambda))\]
where $\Modd(\KP(\Lambda))$ denotes the category of left $\KP(\Lambda)$-modules.

The following lemma describes the action of $\Walk(\Lambda)$ on $V_{(\Delta,\alpha)}$. Note that by Lemma \ref{lemwell}, for any $p\in \Walk(\Lambda)$ and $u\in\Delta^{\ob}$ there is at most one $v\in\Delta^{\ob}$ such that $p\in\alpha({}_v\!\Walk_u(\Delta))$.
\begin{lemma}\label{lemaction}
Let $(\Delta,\alpha)$ be an object $(\Delta,\alpha)$ of $\RG(\Lambda)$. If $p\in \Walk(\Lambda)$ and $u\in\Delta^{\ob}$, then
\[p.u=\begin{cases}v,\quad\quad            &\text{if }p\in\alpha({}_v\!\Walk_u(\Delta)) \text{ for some }v\in \Delta^{\ob}, \\
0,                                           & \text{otherwise.} \end{cases}\]
\end{lemma}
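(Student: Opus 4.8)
The plan is to prove the statement by induction on the length $n$ of a walk $p=z_n\dots z_1\in\Walk(\Lambda)$, where each $z_i\in\Lambda\cup\Lambda^*$, and to show that the claimed formula agrees with the module action $p.u=\pi(p)(u)$ computed via the generating endomorphisms $\sigma_\lambda,\sigma_{\lambda^*}$. First I would handle the base case $n=1$ directly: if $p=\lambda\in\Lambda$, then by definition $\sigma_\lambda(u)=r(\delta)$ precisely when there is a (necessarily unique) $\delta\in\Delta u$ with $\alpha(\delta)=\lambda$, and $0$ otherwise; this is exactly the assertion that $p.u=v$ iff $\lambda\in\alpha({}_v\!\Walk_u(\Delta))$ with $v=r(\delta)$. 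The case $p=\lambda^*$ is symmetric, using $\sigma_{\lambda^*}$ and $v=s(\delta)$ for $\delta\in u\Delta$. The uniqueness of $v$ in all cases is guaranteed by the remark preceding the lemma, which invokes Lemma \ref{lemwell}.

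For the inductive step, I would write $p=z_{n+1}\,q$ where $q=z_n\dots z_1$ has length $n$, so that $p.u=z_{n+1}.(q.u)$ in the module $V_{(\Delta,\alpha)}$. The induction hypothesis applies to $q$: either $q.u=w$ for the unique $w\in\Delta^{\ob}$ with $q\in\alpha({}_w\!\Walk_u(\Delta))$, or $q.u=0$. In the latter case $p.u=z_{n+1}.0=0$, and one checks this matches the claimed formula, since no walk in $\Delta$ starting at $u$ can have image $p$ (its initial segment would already realise $q$). In the former case, I apply the base-case analysis to $z_{n+1}$ acting on $w$: this produces either a vertex $v$ with $z_{n+1}\in\alpha({}_v\!\Walk_w(\Delta))$, or $0$. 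Concatenating the witnessing walks gives a walk in ${}_v\!\Walk_u(\Delta)$ mapping to $p$ under $\alpha$, establishing $p.u=v$; and again the vanishing case is handled by observing that no suitable walk can exist.

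The main obstacle I anticipate is not any single computation but rather the careful bookkeeping needed to guarantee that the witnessing walk in $\Delta$ can actually be assembled from the pieces, and that the vertex $v$ is well-defined at each stage. Concretely, the subtle point is the compatibility of sources and ranges: if $q\in\alpha({}_w\!\Walk_u(\Delta))$ is witnessed by $\tilde q\in{}_w\!\Walk_u(\Delta)$ and $z_{n+1}\in\alpha({}_v\!\Walk_w(\Delta))$ is witnessed by some $\tilde z\in{}_v\!\Walk_w(\Delta)$, then $\tilde z\,\tilde q$ is a legitimate walk in $\Delta$ from $u$ to $v$ with $\alpha(\tilde z\,\tilde q)=z_{n+1}q=p$. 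Here Lemma \ref{lemwell} does the essential work, since it forces the witnessing walk realising $p$ to be unique, so there is no ambiguity in the output vertex. The converse direction of the vanishing case — showing that $p.u=0$ forces the nonexistence of any $v$ with $p\in\alpha({}_v\!\Walk_u(\Delta))$ — follows by contrapositive: the existence of such a walk in $\Delta$ would, by truncation and the injectivity properties in Definition \ref{defwp}, force $q.u\neq 0$ and then $z_{n+1}.(q.u)\neq 0$, contradicting $p.u=0$.
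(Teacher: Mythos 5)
Your proof is correct. The paper states Lemma \ref{lemaction} without any proof, treating it as an immediate consequence of the definitions of $\sigma_\lambda$, $\sigma_{\lambda^*}$ and of Lemma \ref{lemwell}; your induction on the length of the walk, splitting off the last edge and using Lemma \ref{lemwell} to identify truncated witnessing walks, is exactly the routine verification the paper leaves implicit, so the two approaches coincide.
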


\begin{corollary}\label{coraction}
If $a=\sum_{p\in \Walk(\Lambda)}k_p p\in \KP(\Lambda)$ and $u\in\Delta^{\ob}$, then
\[a.u=\sum_{v\in\Delta^{\ob}}(\sum_{p\in\alpha({}_v\!\Walk_u(\Delta))}k_p)v.\]
\end{corollary}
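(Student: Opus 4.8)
The plan is to reduce everything to Lemma \ref{lemaction} by exploiting the $K$-linearity of the module action and then regrouping the resulting sum by basis vectors. First I would recall that the action is defined by $a.x=\pi(a)(x)$, where $\pi:\KP(\Lambda)\to\End_K(V_{(\Delta,\alpha)})$ is a $K$-algebra homomorphism; in particular the map $a\mapsto a.u$ is $K$-linear. Since $a=\sum_{p\in\Walk(\Lambda)}k_p\,p$ is a finite $K$-linear combination of walks, linearity immediately gives $a.u=\sum_{p\in\Walk(\Lambda)}k_p\,(p.u)$, a sum in which only finitely many terms are nonzero (namely those indexed by $p\in\supp a$).

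Next I would apply Lemma \ref{lemaction} termwise. For each walk $p$, either there is a vertex $v\in\Delta^{\ob}$ with $p\in\alpha({}_v\!\Walk_u(\Delta))$, in which case $p.u=v$, or no such $v$ exists and $p.u=0$. Crucially, by the remark preceding Lemma \ref{lemaction} (which itself rests on Lemma \ref{lemwell}) this $v$ is \emph{unique} when it exists. Substituting, the surviving terms are exactly those walks lying in $\alpha(\Walk_u(\Delta))=\bigcup_{v\in\Delta^{\ob}}\alpha({}_v\!\Walk_u(\Delta))$, and the uniqueness of $v$ says precisely that this union is disjoint.

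Finally I would regroup the sum according to the value $v$. Because the sets $\alpha({}_v\!\Walk_u(\Delta))$ for distinct $v$ are pairwise disjoint, collecting the coefficients $k_p$ attached to each basis vector $v$ does not overcount any $k_p$, and yields $a.u=\sum_{v\in\Delta^{\ob}}\bigl(\sum_{p\in\alpha({}_v\!\Walk_u(\Delta))}k_p\bigr)v$, as claimed; the outer sum is finite since $\supp a$ is finite. I do not anticipate any real obstacle here, as the argument is essentially bookkeeping. The single point that deserves explicit mention is the legitimacy of the regrouping, which hinges entirely on the ``at most one $v$'' observation recorded just before Lemma \ref{lemaction}: without it a walk $p$ could contribute to several vertices simultaneously and the double sum would be ill-defined.
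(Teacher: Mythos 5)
Your proof is correct and takes exactly the route the paper intends: the paper states Corollary~\ref{coraction} without proof as an immediate consequence of Lemma~\ref{lemaction}, and your argument (linearity of $a\mapsto a.u$ via the homomorphism $\pi$, termwise application of Lemma~\ref{lemaction}, then regrouping by $v$ using the disjointness of the sets $\alpha({}_v\!\Walk_u(\Delta))$ guaranteed by Lemma~\ref{lemwell}) is precisely the bookkeeping the paper leaves to the reader. You rightly flag the ``at most one $v$'' observation as the one point that makes the regrouping legitimate.
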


The corollory below follows from Lemmas \ref{lemwell} and \ref{lemaction}.
\begin{corollary}\label{corwell}
If $p\in \Walk(\Lambda)$ and $u\neq u'\in\Delta^{\ob}$, then either $p.u=p.u'=0$ or $p.u\neq p.u'$. 
\end{corollary}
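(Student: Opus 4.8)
The plan is to argue by a short case analysis on whether $p.u$ and $p.u'$ are zero, reducing everything to the single nontrivial situation where both are nonzero and equal. By Lemma \ref{lemaction}, the only possible values of $p.u$ and $p.u'$ are $0$ or a vertex in $\Delta^{\ob}$. If both equal $0$, then the first alternative $p.u=p.u'=0$ holds. If exactly one of them is $0$, then one value is a basis vector of $V_{(\Delta,\alpha)}$ and the other is $0$, so they are distinct and the second alternative $p.u\neq p.u'$ holds.

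It then remains to treat the case $p.u\neq 0$ and $p.u'\neq 0$, where I would write $p.u=v$ and $p.u'=v'$ for some $v,v'\in\Delta^{\ob}$ and show $v\neq v'$. Suppose toward a contradiction that $v=v'$. By Lemma \ref{lemaction}, $p\in\alpha({}_v\!\Walk_u(\Delta))$ and $p\in\alpha({}_v\!\Walk_{u'}(\Delta))$, so there are walks $a\in{}_v\!\Walk_u(\Delta)$ and $b\in{}_v\!\Walk_{u'}(\Delta)$ with $\alpha(a)=p=\alpha(b)$. Since $r(a)=v=r(b)$, Lemma \ref{lemwell} forces $a=b$, and hence $u=s(a)=s(b)=u'$, contradicting $u\neq u'$. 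Therefore $v\neq v'$, i.e. $p.u\neq p.u'$, which is again the second alternative.

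The only mild obstacle is bookkeeping: ensuring the case analysis is exhaustive and invoking Lemma \ref{lemwell} through the matching \emph{ranges} $r(a)=r(b)$ rather than the sources (which is exactly what the equality $p.u=p.u'$ supplies). Everything else is a direct translation of the hypotheses through Lemma \ref{lemaction}.
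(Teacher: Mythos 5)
Your proof is correct and is exactly the argument the paper has in mind: the paper simply states that the corollary "follows from Lemmas \ref{lemwell} and \ref{lemaction}," and your case analysis, invoking Lemma \ref{lemaction} to identify the values and Lemma \ref{lemwell} (via matching ranges) to force the two preimage walks to coincide, is the natural unpacking of that one-line justification.
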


\subsection{Fullness of the functor $V$}

Let $(\Delta,\alpha)$ be an object of $\RG(\Lambda)$ and $(\Delta_\approx,\alpha_\approx)$ a quotient of $(\Delta,\alpha)$. Then, by Proposition \ref{thmadm}, there is a morphism $\phi:(\Delta,\alpha)\to (\Delta_\approx,\alpha_\approx)$, and hence a surjective morphism $V_\phi:V_{(\Delta,\alpha)}\rightarrow V_{(\Delta_\approx,\alpha_\approx)}$. By the lemma below, which is easy to check, there is also a morphism $V_{(\Delta_\approx,\alpha_\approx)}\to V_{(\Delta,\alpha)}$.

\begin{lemma}\label{lemopp}
Let $(\Delta,\alpha)$ be an object of $\RG(\Lambda)$ and $(\Delta_\approx,\alpha_\approx)$ a quotient of $(\Delta,\alpha)$. Then there is a morphism $V_{(\Delta_\approx,\alpha_\approx)}\to V_{(\Delta,\alpha)}$ mapping $[u]\mapsto \sum_{v\approx u}v$.
\end{lemma}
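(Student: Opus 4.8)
The plan is to define $\psi\colon V_{(\Delta_\approx,\alpha_\approx)}\to V_{(\Delta,\alpha)}$ on the basis $\Delta_\approx^{\ob}=\Delta^{\ob}/\!\approx$ by $\psi([u])=\sum_{v\approx u}v$ and extend $K$-linearly, and then verify that $\psi$ is a homomorphism of $\KP(\Lambda)$-modules. As a preliminary one checks that each class $[u]$ is finite, so that $\sum_{v\approx u}v$ is a genuine element of $V_{(\Delta,\alpha)}$. Since the action and $\psi$ are $K$-linear and $\KP(\Lambda)$ is generated by $\Lambda\cup\Lambda^*$, it then suffices to prove $\psi(a.[u])=a.\psi([u])$ for every $u\in\Delta^{\ob}$ and every generator $a\in\Lambda\cup\Lambda^*$: extending in $[u]$ by linearity and inducting on the length of a word in the generators gives the identity for all $a$ and all elements.

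I would treat $a=\lambda\in\Lambda$ first (the case $a=\lambda^*$ being symmetric). Using Lemma \ref{lemaction} together with condition (i) of Definition \ref{defwp} for the quotient, $\lambda.[u]=[r(\delta)]$ when there is a (necessarily unique) $\delta\in\Delta u$ with $\alpha(\delta)=\lambda$, and $\lambda.[u]=0$ otherwise; the existence of $\delta$ depends only on whether $s(\lambda)=\alpha(u)$. On the other side, $\lambda.\psi([u])=\sum_{v\approx u}\lambda.v$, and since $u\approx v$ forces $\alpha(u)=\alpha(v)$, for each $v\approx u$ there is likewise a unique $\delta_v\in\Delta v$ with $\alpha(\delta_v)=\lambda$ (with $\delta_u=\delta$) precisely when $s(\lambda)=\alpha(u)$, so $\lambda.v=r(\delta_v)$. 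Thus the identity to be proved reduces to the claim that $v\mapsto r(\delta_v)$ is a bijection from $[u]$ onto $[r(\delta)]$.

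The heart of the argument is this bijection, and I expect its surjectivity to be the main obstacle. That $r(\delta_v)\approx r(\delta)$, so the map lands in $[r(\delta)]$, is exactly condition (ii) of Definition \ref{defadm} applied to the walks $\delta\in{}_{r(\delta)}\!\Walk_u(\Delta)$ and $\delta_v\in{}_{r(\delta_v)}\!\Walk_v(\Delta)$, whose common $\alpha$-image is $\lambda$. Injectivity is immediate from Lemma \ref{lemwell}: if $r(\delta_v)=r(\delta_{v'})$ then $\delta_v=\delta_{v'}$ (equal range, equal $\alpha$-image), whence $v=v'$. For surjectivity, given $w\approx r(\delta)$ I would use $w\sim r(\delta)$ and $\lambda^*=\alpha(\delta^*)\in\alpha(\Walk_{r(\delta)}(\Delta))=\alpha(\Walk_w(\Delta))$ to produce a walk in $\Walk_w(\Delta)$ mapping to $\lambda^*$; by Lemma \ref{lemwell} this walk is $\delta_w^*$ for a unique edge $\delta_w\in w\Delta$ with $\alpha(\delta_w)=\lambda$, and setting $v:=s(\delta_w)$ gives $\delta_v=\delta_w$ and $r(\delta_v)=w$. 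Finally admissibility applied to $\delta^*\in{}_u\!\Walk_{r(\delta)}(\Delta)$ and $\delta_w^*\in{}_v\!\Walk_w(\Delta)$ (common $\alpha$-image $\lambda^*$) yields $u\approx v$, so $w$ lies in the image.

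Assembling these equalities gives $\psi(\lambda.[u])=\sum_{w\approx r(\delta)}w=\sum_{v\approx u}r(\delta_v)=\lambda.\psi([u])$, and the $\lambda^*$ case follows by the symmetric argument, this time invoking condition (ii) of Definition \ref{defwp} to produce the relevant edges out of $v$ and using the reverse walks. The only genuinely nontrivial ingredient throughout is the interplay between admissibility and Lemma \ref{lemwell} that forces the edges $\delta_v$ to match up class by class; everything else is bookkeeping with the two defining conditions of a representation $k$-graph.
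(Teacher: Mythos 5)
Your overall strategy is the natural one, and its mathematical core is correct: reducing $\KP(\Lambda)$-linearity of $\psi$ to the generators $\Lambda\cup\Lambda^*$, computing $\lambda.[u]=[r(\delta)]$ via Lemma \ref{lemaction} and Definition \ref{defwp}(i), and then proving that $v\mapsto r(\delta_v)$ is a bijection from $[u]$ onto $[r(\delta)]$ — with Definition \ref{defadm}(ii) giving that the map lands in $[r(\delta)]$ and (applied to the reversed walks $\delta^*$, $\delta_w^*$) giving surjectivity, and Lemma \ref{lemwell} giving injectivity. Since the paper offers no written proof (it declares the lemma ``easy to check''), this is presumably exactly the intended argument, and that part of your proposal I have no quarrel with.

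The genuine gap is your preliminary claim that ``one checks that each class $[u]$ is finite.'' This cannot be checked, because it is false in general, and it is precisely the point where the statement itself breaks down. Concretely, let $\Lambda$ be the $1$-graph with one vertex and one edge $\lambda$, and let $\Delta$ be the line: $\Delta^{\ob}=\Z$, with exactly one edge $n\to n+1$ for each $n$, and $\alpha:\Delta\to\Lambda$ the unique $1$-graph morphism. Then $(\Delta,\alpha)$ is an object of $\RG(\Lambda)$, and every two vertices of $\Delta$ are $\sim$-equivalent (translation by $v-u$ is an automorphism of $\Delta$ over $\Lambda$), so the admissible relation $\approx\,=\,\sim$ has a single infinite class. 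For the corresponding quotient, which is isomorphic to $(\Lambda,\id_\Lambda)$, the sum $\sum_{v\approx u}v$ has infinitely many terms and is not an element of $V_{(\Delta,\alpha)}$. Worse, no nonzero $\KP(\Lambda)$-morphism $V_{(\Delta_\sim,\alpha_\sim)}\to V_{(\Delta,\alpha)}$ exists at all: $V_{(\Delta_\sim,\alpha_\sim)}$ is spanned by the single vertex $[u]$, which is fixed by $\lambda$, while $\lambda$ acts on $V_{(\Delta,\alpha)}$ by sending the basis vector $n$ to $n+1$, and the only finitely supported shift-invariant element is $0$. So the lemma needs the additional hypothesis that the $\approx$-classes are finite (as they are in Example \ref{excat11}, the only place the paper uses it); under that hypothesis your bijection argument does complete the proof, but the finiteness is an assumption that must be imposed, not a fact one can verify.
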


The example below shows that in general $V$ is not full, namely, there can be morphisms $V_{(\Delta,\alpha)}\rightarrow V_{(\Sigma,\beta)}$ that are not induced by a morphism $(\Delta,\alpha)\rightarrow(\Sigma,\beta)$. 

\begin{example}\label{excat11}
Suppose $\Lambda$ is the $1$-graph with one object $v$ and one morphism of degree $1$, namely $\lambda$. Let $\Delta$ be the $1$-graph with two objects $v_1$ and $v_2$ whose only morphisms of degree $1$ are $\delta_1$, with source $v_1$ and range $v_2$, and $\delta_2$, with source $v_2$ and range $v_1$. Let $\alpha:\Delta\to\Lambda$ be the unique $1$-graph morphism. Then $(\Delta,\alpha)$ is a representation $k$-graph for $\Lambda$. By Lemma \ref{lemopp} there is a a homomorphism $V_{\Delta_\sim}\to V_\Delta$. Clearly this homomorphism is not induced by a morphism $(\Delta_\sim,\phi_\sim)\to (\Delta,\phi)$ (otherwise $(\Delta,\phi)$ would be isomorphic to a quotient of $(\Delta_\sim,\phi_\sim)$; but this is impossible since $\Delta$ has two objects while $\Delta_\sim\cong\Lambda$ has only one).
\end{example}

\begin{question}\label{Q1}
Can it happen that $(\Delta,\alpha)\not\cong(\Sigma,\beta)$ in $\RG(\Lambda)$ but $V_{(\Delta,\alpha)}\cong V_{(\Sigma,\beta)}$ in $\Modd(\KP(\Lambda))$?
\end{question}

The author does not know the answer to Question \ref{Q1}. But we will show that if $V_{(\Delta,\alpha)}\cong V_{(\Sigma,\beta)}$, then $(\Delta,\alpha)$ and $(\Sigma,\beta)$ lie in the same connected component of $\RG(\Lambda)$.

If $(\Delta,\alpha)$ and $(\Sigma,\beta)$ are objects of $\RG(\Lambda)$, we write $(\Delta,\alpha)\leftrightharpoons(\Sigma,\beta)$ if there is a $u\in\Delta^{\ob}$ and a $v\in\Sigma^{\ob}$ such that $\alpha(\Walk_u(\Delta))=\beta(\Walk_{v}(\Sigma))$. One checks easily that $\leftrightharpoons$ defines an equivalence relation on $\Ob(\RG(\Lambda))$. We leave the proof of the next lemma to the reader.

\begin{lemma}\label{lemwithoutnumber}
Let $(\Delta,\alpha)$ and $(\Sigma,\beta)$ be objects of $\RG(\Lambda)$. Let $u\in\Delta^{\ob}$ and $v\in\Sigma^{\ob}$ such that $\alpha(\Walk_u(\Delta))=\beta(\Walk_{v}(\Sigma))$. Then $\alpha(\Walk_{p.u}(\Delta))=\beta(\Walk_{p.v}(\Sigma))$ for any $p\in \alpha(\Walk_u(\Delta))=\beta(\Walk_{v}(\Sigma))$.
\end{lemma}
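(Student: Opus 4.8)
Let $(\Delta,\alpha)$ and $(\Sigma,\beta)$ be objects of $\RG(\Lambda)$, let $u\in\Delta^{\ob}$ and $v\in\Sigma^{\ob}$ satisfy $\alpha(\Walk_u(\Delta))=\beta(\Walk_v(\Sigma))$, and let $p\in\alpha(\Walk_u(\Delta))=\beta(\Walk_v(\Sigma))$. Then $\alpha(\Walk_{p.u}(\Delta))=\beta(\Walk_{p.v}(\Sigma))$.

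The plan is to reduce the claimed equality to a single inclusion and then upgrade it using Lemma \ref{lempath}. First I would fix notation for the two target vertices. Since $p\in\alpha(\Walk_u(\Delta))$, there is a walk $q\in\Walk_u(\Delta)$ with $\alpha(q)=p$; by Lemma \ref{lemwell} this $q$ is the unique such walk, and by Lemma \ref{lemaction} we have $p.u=w$ where $w:=r(q)$. Symmetrically, as $p\in\beta(\Walk_v(\Sigma))$, there is a unique $q'\in\Walk_v(\Sigma)$ with $\beta(q')=p$, and $p.v=w'$ where $w':=r(q')$. In particular $p.u$ and $p.v$ are genuine vertices (not $0$), and $\alpha(w)=r(p)=\beta(w')$, so $\alpha(w)=\beta(w')$.

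Next I would establish the inclusion $\alpha(\Walk_w(\Delta))\subseteq\beta(\Walk_{w'}(\Sigma))$. Take $g\in\alpha(\Walk_w(\Delta))$, say $g=\alpha(r)$ with $r\in\Walk_w(\Delta)$. Since $s(r)=w=r(q)$, the concatenation $rq$ is a well-defined walk in $\Walk_u(\Delta)$ with $\alpha(rq)=\alpha(r)\alpha(q)=gp$. Hence $gp\in\alpha(\Walk_u(\Delta))=\beta(\Walk_v(\Sigma))$, so $gp=\beta(t)$ for some $t\in\Walk_v(\Sigma)$.

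The step requiring care is the factorisation of $t$, and I expect it to be the main (though mild) obstacle. Because a walk is by definition a sequence of letters from $\Sigma\cup\Sigma^*$ and $\beta$ is applied letterwise, the equality $\beta(t)=gp$ of walks forces $t$ to split as $t=t't''$, where $t''$ is the initial segment of $t$ mapping onto $p$ and $t'$ the terminal segment mapping onto $g$; here one uses that $\beta$, being degree-preserving, carries non-identity letters to non-identity letters and therefore respects the letterwise decomposition. Then $t''\in\Walk_v(\Sigma)$ with $\beta(t'')=p$, so the uniqueness in Lemma \ref{lemwell} gives $t''=q'$, whence $s(t')=r(t'')=r(q')=w'$. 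Since $\beta(t')=g$, this exhibits $g\in\beta(\Walk_{w'}(\Sigma))$, completing the inclusion.

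Finally, applying Lemma \ref{lempath} with the vertices $w\in\Delta^{\ob}$ and $w'\in\Sigma^{\ob}$ promotes the inclusion to the equality
\[
\alpha(\Walk_{p.u}(\Delta))=\alpha(\Walk_w(\Delta))=\beta(\Walk_{w'}(\Sigma))=\beta(\Walk_{p.v}(\Sigma)),
\]
which is the assertion. Everything apart from the letterwise factorisation of $t$ is a routine assembly of the uniqueness lemma (Lemma \ref{lemwell}), the description of the module action (Lemma \ref{lemaction}), and the inclusion-to-equality upgrade (Lemma \ref{lempath}).
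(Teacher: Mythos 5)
Your proof is correct. The paper in fact leaves this lemma to the reader ("We leave the proof of the next lemma to the reader"), so there is no official proof to compare against; your route --- identify $p.u=r(q)$ and $p.v=r(q')$ via Lemma \ref{lemaction} and the uniqueness in Lemma \ref{lemwell}, prove the single inclusion $\alpha(\Walk_{p.u}(\Delta))\subseteq\beta(\Walk_{p.v}(\Sigma))$ by concatenating and then splitting the witnessing walk letterwise (legitimate, since a degree-preserving functor sends non-identity letters to non-identity letters), and upgrade to equality with Lemma \ref{lempath} --- is exactly the argument the author evidently intends, and all steps check out.
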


\begin{proposition}\label{prop41.5}
Let $(\Delta,\alpha)$ and $(\Sigma,\beta)$ be objects of $\RG(\Lambda)$. Then $(\Delta,\alpha)$ and $(\Sigma,\beta)$ lie in the same connected component of $\RG(\Lambda)$ if and only if $(\Delta,\alpha)\leftrightharpoons(\Sigma,\beta)$.
\end{proposition}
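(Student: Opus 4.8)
The plan is to prove the biconditional in Proposition \ref{prop41.5} by establishing each direction separately, using the characterisation of morphisms as quotients (Proposition \ref{thmadm}) together with Lemma \ref{lempath}.

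For the forward direction, suppose $(\Delta,\alpha)$ and $(\Sigma,\beta)$ lie in the same connected component $C$. Then there is a finite zig-zag of morphisms connecting them in $C$. I would argue that the relation $\leftrightharpoons$ is preserved along each edge of such a zig-zag, so that it suffices to treat a single morphism $\phi:(\Delta,\alpha)\to(\Sigma,\beta)$ (or one in the opposite direction). Given such a $\phi$, pick any $u\in\Delta^{\ob}$ and set $v:=\phi(u)$. By Lemma \ref{lempath} (via the argument already used in the proof of Proposition \ref{thmadm}(i)), we have $\alpha(\Walk_u(\Delta))=\beta(\Walk_{\phi(u)}(\Sigma))=\beta(\Walk_v(\Sigma))$, which is exactly $(\Delta,\alpha)\leftrightharpoons(\Sigma,\beta)$. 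Since $\leftrightharpoons$ is an equivalence relation (as already noted), transitivity along the zig-zag gives $(\Delta,\alpha)\leftrightharpoons(\Sigma,\beta)$.

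For the reverse direction, suppose $(\Delta,\alpha)\leftrightharpoons(\Sigma,\beta)$, so there exist $u\in\Delta^{\ob}$ and $v\in\Sigma^{\ob}$ with $\alpha(\Walk_u(\Delta))=\beta(\Walk_v(\Sigma))$. The idea is to exhibit a common object of $\RG(\Lambda)$ admitting morphisms from both $(\Delta,\alpha)$ and $(\Sigma,\beta)$, which would place all three in the same connected component. I would take a universal covering of $\Delta$, giving by Lemma \ref{hfgfghhffeee} the object $(\Omega,\alpha\circ\tau)=(\Omega_C,\zeta_C)$ in the connected component $C$ of $(\Delta,\alpha)$; by Theorem \ref{thmm1} this is a repelling object of $C$. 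The crux is to produce a morphism $(\Omega_C,\zeta_C)\to(\Sigma,\beta)$, which by definition of connected component would force $(\Sigma,\beta)\in C$. To build this morphism, I would use the hypothesis $\alpha(\Walk_u(\Delta))=\beta(\Walk_v(\Sigma))$ together with Lemma \ref{lemwithoutnumber}, which propagates the equality of walk-images from the base vertices to $(p.u, p.v)$ for every $p$ in the common set. Lifting to the universal cover $\Omega$, every vertex of $\Omega$ maps to some $p.u$-type vertex, so one can define $\eta:\Omega\to\Sigma$ on vertices by sending a vertex over $p.u$ to $p.v$ and extend to paths using condition (i) of Definition \ref{defwp}; Lemma \ref{lemwell} guarantees this is well-defined and Lemma \ref{lempath} guarantees the target vertices exist.

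The main obstacle I anticipate is verifying that the assignment $\eta$ is genuinely a well-defined $k$-graph morphism making the triangle over $\Lambda$ commute. The subtlety is that a single vertex $w$ of $\Omega$ may be reachable by many walks $p$ from the chosen basepoint, and I must check that the value $p.v$ is independent of the representing walk $p$. This is precisely where I would invoke Lemma \ref{lemwithoutnumber} in combination with Lemma \ref{lemwell}: if $p.u=p'.u$ in $\Delta$ for two walks $p,p'$, then the equality of the walk-image sets forces $p.v=p'.v$ in $\Sigma$, so $\eta$ is consistent on vertices, and the degree-preserving functoriality then follows from the factorisation property and condition (i). Once $\eta$ is in hand, commutativity $\beta\circ\eta=\zeta_C$ is immediate from the construction, completing the argument that $(\Sigma,\beta)$ lies in $C$.
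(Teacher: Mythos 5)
Your forward direction is correct and is essentially the paper's: a single morphism $\phi:(\Delta,\alpha)\to(\Sigma,\beta)$ gives $\alpha(\Walk_u(\Delta))\subseteq\beta(\Walk_{\phi(u)}(\Sigma))$, Lemma \ref{lempath} upgrades this to equality, and transitivity of $\leftrightharpoons$ handles the zig-zag. The reverse direction, however, has a genuine gap: the well-definedness claim you invoke --- ``if $p.u=p'.u$ in $\Delta$ then $p.v=p'.v$ in $\Sigma$'' --- is false, and it is not what Lemmas \ref{lemwell} and \ref{lemwithoutnumber} give you. Concretely, let $\Lambda$ be the $1$-graph with one vertex and one loop $\lambda$, let $(\Delta,\alpha)=(\Lambda,\id)$, and let $(\Sigma,\beta)$ be the two-vertex representation $1$-graph of Example \ref{excat11} (vertices $v_1,v_2$, edges $\delta_1:v_1\to v_2$ and $\delta_2:v_2\to v_1$, with $\beta$ the covering map). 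Both are coverings of $\Lambda$, so every walk of $\Lambda$ lifts from every vertex, and hence $\alpha(\Walk_u(\Delta))=\Walk(\Lambda)=\beta(\Walk_{v_1}(\Sigma))$; thus $(\Delta,\alpha)\leftrightharpoons(\Sigma,\beta)$ with $u$ the unique vertex of $\Delta$ and $v=v_1$. Taking $p=\lambda$ and $p'=\lambda\lambda$ one has $p.u=u=p'.u$, but $p.v_1=v_2\neq v_1=p'.v_1$. What the two cited lemmas actually yield from $p.u=p'.u$ is only $\beta(\Walk_{p.v}(\Sigma))=\alpha(\Walk_{p.u}(\Delta))=\alpha(\Walk_{p'.u}(\Delta))=\beta(\Walk_{p'.v}(\Sigma))$, i.e.\ $p.v\sim p'.v$; they never force the equality $p.v=p'.v$ that your map $\eta:\Omega\to\Sigma$ requires.

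Passing to the universal cover does not repair this by itself. What your construction actually needs is: if two walks in $\Omega$ from the basepoint have the same range (equivalently, their projections are equivalent in $\G(\Delta)$), then their $\zeta$-images act identically on $v_0\in\Sigma^{\ob}$. This is a homotopy-lifting statement for representation $k$-graphs, which are not coverings of $\Lambda$; Lemma \ref{lemcous} (proved later, and only for coverings) does not apply, and one cannot simply induct along a chain of elementary $\sim_R$-moves, because intermediate walks in the chain need not lift from $v_0$ (for instance $1_{r(\lambda)}$ fixes every vertex over $r(\lambda)$, while $\lambda\lambda^*$ may act as $0$ there). The paper sidesteps exactly this difficulty by mapping quotient to quotient: it defines $\phi:(\Delta_\sim,\alpha_\sim)\to(\Sigma_\sim,\beta_\sim)$ by $\phi([u])=[\alpha(p_u).v_0]$, so that well-definedness only requires the $\sim$-equivalence $p.v\sim p'.v$, which is precisely what Lemma \ref{lemwithoutnumber} provides; the zig-zag $(\Delta,\alpha)\to(\Delta_\sim,\alpha_\sim)\to(\Sigma_\sim,\beta_\sim)\leftarrow(\Sigma,\beta)$, justified by Proposition \ref{thmadm}, then places the two objects in the same connected component. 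Your argument can be repaired the same way: drop the universal cover entirely and redirect the map into $\Sigma_\sim$ rather than $\Sigma$.
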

\begin{proof}
($\Rightarrow$) Suppose that $(\Delta,\alpha)$ and $(\Sigma,\beta)$ lie in the same connected component of $\RG(\Lambda)$. In order to prove that $(\Delta,\alpha)\leftrightharpoons(\Sigma,\beta)$, it suffices to consider the case that there is a morphism $\phi:(\Delta,\alpha)\to(\Sigma,\beta)$. Choose a $u\in\Delta^{\ob}$. Then clearly $\alpha(\Walk_u(\Delta))=\beta(\phi(\Walk_u(\Delta))\subseteq\beta(\Walk_{\phi(u)}(\Sigma))$. It follows from Lemma \ref{lempath} that $\alpha(\Walk_u(\Delta))=\beta(\Walk_{\phi(u)}(\Sigma))$. Thus $(\Delta,\alpha)\leftrightharpoons(\Sigma,\beta)$.\\
($\Leftarrow$) Suppose now that $(\Delta,\alpha)\leftrightharpoons(\Sigma,\beta)$. Then there is a $u_0\in\Delta^{\ob}$ and a $v_0\in\Sigma^{\ob}$ such that $\alpha(\Walk_{u_0}(\Delta))=\beta(\Walk_{v_0}(\Sigma))$. Since $\Delta$ is connected, we can choose for any $u\in \Delta^{\ob}$ a $p_u\in {}_u\!\Walk_{u_0}(\Delta)$. Define a functor $\phi:\Delta_\sim\to\Sigma_\sim$ by 
\begin{align*}
&\phi([u])=[\alpha(p_u).v_0] \text{ for any }u\in \Delta^{\ob}\text{ and }\\&\phi([\delta])=[\sigma]\text{ for any }\delta\in \Delta^{\ob},\text{ where }s(\sigma)=\alpha(p_{s(\delta)}).v_0\text{ and }\beta(\sigma)=\alpha(\delta).
\end{align*}
It follows from Lemma \ref{lemwithoutnumber} that $\alpha$ is well-defined. One checks routinely that $\phi:(\Delta_\sim,\alpha_\sim)\to(\Sigma_\sim,\beta_\sim)$ is a morphism in $\RG(\Lambda)$. Thus, in view of Proposition \ref{thmadm}, $(\Delta,\alpha)$ and $(\Sigma,\beta)$ lie in the same connected component of $\RG(\Lambda)$.
\end{proof}

\begin{lemma}\label{lemmodule}
Let $(\Delta,\alpha)$ and $(\Sigma,\beta)$ be objects of $\RG(\Lambda)$ and let $\theta:V_{(\Delta,\alpha)}\rightarrow V_{(\Sigma,\beta)}$ a $\KP(\Lambda)$-module homomorphism. Let $u\in \Delta^{\ob}$ and suppose that $\theta(u)=\sum_{i=1}^nk_iv_i$ for some $n\geq 1$, $k_1,\dots,k_n\in K^\times$ and pairwise distinct vertices $v_1,\dots,v_n\in\Sigma^{\ob}$. Then $\alpha(\Walk_u(\Delta))=\beta(\Walk_{v_i}(\Sigma))$ for any $1\leq i\leq n$.
\end{lemma}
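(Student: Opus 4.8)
The plan is to analyze how the module homomorphism $\theta$ interacts with the action of walks, using Corollary \ref{coraction} as the main computational tool. The key observation is that applying a walk $p \in \Walk(\Lambda)$ to a vertex picks out a single vertex (or kills it), and $\theta$ being a module map must respect this. First I would fix an index $i$ and show the inclusion $\alpha(\Walk_u(\Delta)) \subseteq \beta(\Walk_{v_i}(\Sigma))$; by Lemma \ref{lempath} this suffices, since it upgrades the inclusion to an equality. So let $p \in \alpha(\Walk_u(\Delta))$, meaning $p.u = w$ for some $w \in \Delta^{\ob}$, and the goal is to show $p.v_i \neq 0$.

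The main idea is to apply $p$ to the equation $\theta(u) = \sum_{j=1}^n k_j v_j$ and use that $\theta$ commutes with the $\KP(\Lambda)$-action. On the left we get $\theta(p.u) = \theta(w)$, which is a vector in $V_{(\Sigma,\beta)}$ whose coordinates are all nonnegative-integer-like (indeed, $\theta(w)$ is some fixed element, but the crucial point is that it is a specific single application of $\theta$). On the right we get $p.\theta(u) = \sum_{j=1}^n k_j (p.v_j)$. Here is where Corollary \ref{corwell} becomes essential: for distinct vertices $v_j$, the elements $p.v_j$ are either all zero or pairwise distinct. Thus the vertices $p.v_j$ that are nonzero are pairwise distinct, and since the $k_j$ lie in $K^\times$, there is no cancellation among the corresponding basis vectors. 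Therefore the right-hand side $\sum_j k_j(p.v_j)$ is a $K$-linear combination of \emph{distinct} basis vertices with coefficients in $K^\times$, one for each $j$ with $p.v_j \neq 0$.

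Now I would argue by counting. Consider the element $\theta(u) = \sum_{i=1}^n k_i v_i$; applying any $p \in \alpha(\Walk_u(\Delta))$ gives a vector with exactly $|\{j : p.v_j \neq 0\}|$ nonzero terms. The heart of the argument is to show that for every such $p$ and every $i$, we must have $p.v_i \neq 0$. Suppose instead that $p.v_i = 0$ for some fixed $p$ and $i$. One compares the number of nonzero summands in $\theta(w) = p.\theta(u)$ for this $p$ against what happens for the empty/trivial walk (the vertex $u$ itself, where all $n$ summands survive). The precise mechanism is that if $p.v_i = 0$ for some $p \in \alpha(\Walk_u(\Delta))$, then $p \notin \beta(\Walk_{v_i}(\Sigma))$, which by Lemma \ref{lempath} forces $\alpha(\Walk_u(\Delta)) \not\subseteq \beta(\Walk_{v_i}(\Sigma))$ to \emph{fail transitively}; to derive a contradiction one tracks a vertex that has a "thinner" walk set and shows the image under $\theta$ cannot have the right support.

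The step I expect to be the main obstacle is ruling out the possibility that $p.v_i = 0$ while $p.v_j \neq 0$ for some $j \neq i$; in other words, showing that all the $v_i$ have \emph{identical} walk-image sets, not merely overlapping ones. The clean way to handle this is likely to invoke an injectivity or support-preservation property: since $\theta$ is a homomorphism and the vertices $v_1,\dots,v_n$ appear with nonzero coefficients in the single vector $\theta(u)$, applying successive walks and using Corollary \ref{corwell} shows the supports of $p.\theta(u)$ across all $p$ must stay ``linked,'' forcing each $\beta(\Walk_{v_i}(\Sigma))$ to coincide. I would expect to formalize this by showing $\beta(\Walk_{v_i}(\Sigma)) = \beta(\Walk_{v_j}(\Sigma))$ for all $i,j$ first (using Lemma \ref{lemwithoutnumber} iteratively, since the $v_i$ are ``$\leftrightharpoons$-linked'' through $\theta$), and only then identifying this common set with $\alpha(\Walk_u(\Delta))$ via the double inclusion and Lemma \ref{lempath}.
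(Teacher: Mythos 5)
Your proposal assembles the right ingredients (Lemma \ref{lemaction}, Corollary \ref{corwell}, Lemma \ref{lempath}) but attacks the inclusion in the wrong direction, and the obstacle you yourself flag is never actually resolved, so there is a genuine gap. You fix $i$ and try to prove $\alpha(\Walk_u(\Delta))\subseteq\beta(\Walk_{v_i}(\Sigma))$ directly: take $p\in\alpha(\Walk_u(\Delta))$ and show $p.v_i\neq 0$. But from $\theta(p.u)=p.\theta(u)=\sum_j k_j(p.v_j)$ all you can extract is that \emph{some} $p.v_j\neq 0$ (and even that needs a small argument, e.g. $u=p^*.(p.u)$, so $\theta(p.u)=0$ would force $\theta(u)=0$); nothing forces the particular index $i$ to survive. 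Your proposed repair is circular: Lemma \ref{lemwithoutnumber} takes as \emph{hypothesis} an equality of the form $\alpha(\Walk_u(\Delta))=\beta(\Walk_{v}(\Sigma))$, which is exactly the kind of statement you are trying to establish, and the phrases ``fail transitively'' and ``supports must stay linked'' do not correspond to any concrete deduction.

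The paper's proof exploits an asymmetry that your setup misses: it proves the \emph{reverse} inclusion $\beta(\Walk_{v_i}(\Sigma))\subseteq\alpha(\Walk_u(\Delta))$, by contraposition. If $p\notin\alpha(\Walk_u(\Delta))$, then $p.u=0$ by Lemma \ref{lemaction}, hence
\[0=\theta(p.u)=p.\theta(u)=\sum_{i=1}^n k_i(p.v_i),\]
and Corollary \ref{corwell} (the nonzero $p.v_i$ are pairwise distinct basis vectors, so no cancellation can occur) forces $p.v_i=0$, i.e. $p\notin\beta(\Walk_{v_i}(\Sigma))$, for \emph{every} $i$ simultaneously. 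This is where the direction matters: an equation $0=\sum_i k_i(p.v_i)$ kills each term individually, whereas your equation $\theta(w)=\sum_i k_i(p.v_i)$ with nonzero left-hand side pins down none of them. Lemma \ref{lempath} then upgrades $\beta(\Walk_{v_i}(\Sigma))\subseteq\alpha(\Walk_u(\Delta))$ to the desired equality (it works for either direction of inclusion, so your reduction step was fine). If you redo your argument contrapositively in this direction, the entire proof is four lines.
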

\begin{proof} 
Let $p\in \Walk(\Lambda)$ such that $p\not\in\alpha(\Walk_u(\Delta))$. Then
\[0=\theta(0)=\theta(p.u)=p.\theta(u)=p.\sum_{i=1}^nk_iv_i=\sum_{i=1}^nk_i(p.v_i)\]
by Lemma \ref{lemaction}. It follows from Corollary \ref{corwell} that $p.v_i=0$ for any $1\leq i\leq n$, whence $p\not\in \beta(\Walk_{v_i}(\Sigma))$ for any $1\leq i\leq n$. Hence we have shown that $\alpha(\Walk_u(\Delta))\supseteq\beta(\Walk_{v_i}(\Sigma))$ for any $1\leq i\leq n$. It follows from Lemma \ref{lempath} that $\alpha(\Walk_u(\Delta))=\beta(\Walk_{v_i}(\Sigma))$ for any $1\leq i\leq n$.
\end{proof}

The theorem below follows directly from Proposition \ref{prop41.5} and Lemma \ref{lemmodule}.
\begin{theorem}\label{prop42}
Let $(\Delta,\alpha)$ and $(\Sigma,\beta)$ be objects of $\RG(\Lambda)$. If there is a nonzero $\KP(\Lambda)$-module homomorphism $\theta:V_{(\Delta,\alpha)}\rightarrow V_{(\Sigma,\beta)}$, then $(\Delta,\alpha)$ and $(\Sigma,\beta)$ lie in the same connected component of $\RG(\Lambda)$.
\end{theorem}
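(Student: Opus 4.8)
The plan is to combine the two preceding results, Lemma~\ref{lemmodule} and Proposition~\ref{prop41.5}, which between them carry essentially all the weight. The strategy reduces to extracting from the hypothesis (a single nonzero module map $\theta$) one pair of vertices witnessing the relation $\leftrightharpoons$, and then invoking the dictionary between $\leftrightharpoons$ and connected components supplied by Proposition~\ref{prop41.5}.

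First I would use that $V_{(\Delta,\alpha)}$ has the vertex set $\Delta^{\ob}$ as a $K$-basis. Since $\theta$ is a nonzero $K$-linear map, it cannot vanish on every basis vector, so there is some $u\in\Delta^{\ob}$ with $\theta(u)\neq 0$. Writing $\theta(u)$ in terms of the basis $\Sigma^{\ob}$ of $V_{(\Sigma,\beta)}$, I obtain $\theta(u)=\sum_{i=1}^n k_i v_i$ with $n\geq 1$, all $k_i\in K^{\times}$, and the $v_i$ pairwise distinct. This is exactly the shape of input required by Lemma~\ref{lemmodule}.

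Next I would apply Lemma~\ref{lemmodule} directly. It yields $\alpha(\Walk_u(\Delta))=\beta(\Walk_{v_i}(\Sigma))$ for every $i$; taking $i=1$ produces a vertex $u\in\Delta^{\ob}$ and a vertex $v_1\in\Sigma^{\ob}$ with $\alpha(\Walk_u(\Delta))=\beta(\Walk_{v_1}(\Sigma))$, which is precisely the assertion that $(\Delta,\alpha)\leftrightharpoons(\Sigma,\beta)$. Finally, Proposition~\ref{prop41.5} identifies this relation with membership in the same connected component of $\RG(\Lambda)$, which is the desired conclusion.

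There is no genuine obstacle at this level: the content has been front-loaded into the two cited results. The only point requiring any care is the very first step, passing from ``$\theta\neq 0$'' to ``$\theta$ is nonzero on some basis vertex,'' and even this is routine linear algebra relying solely on the fact (fixed when the functor $V$ was defined) that $\Delta^{\ob}$ is a basis of $V_{(\Delta,\alpha)}$. If one wished to trace where the real substance lies, it sits inside Lemma~\ref{lemmodule}, whose proof leverages Corollary~\ref{corwell} together with Lemma~\ref{lempath} to promote a mere containment of walk-images into the equality needed here.
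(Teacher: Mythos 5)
Your proposal is correct and follows exactly the paper's argument: the paper also deduces the theorem directly from Lemma~\ref{lemmodule} and Proposition~\ref{prop41.5}, and your write-up merely makes explicit the routine step of locating a basis vertex $u$ with $\theta(u)\neq 0$ and expanding $\theta(u)$ in the basis $\Sigma^{\ob}$ so that Lemma~\ref{lemmodule} applies.
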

%The Corollary below implies that nonisomorphic irreducible representation $k$-graphs for $\Lambda$ define nonisomorphic simple modules for $\KP(\Lambda)$.

\begin{corollary}\label{prop43}
Let $(\Delta,\alpha)$ and $(\Sigma,\beta)$ be irreducible representation $k$-graphs for $\Lambda$. Then $(\Delta,\alpha)\cong(\Sigma,\beta)$ if and only if $V_{(\Delta,\alpha)}\cong V_{(\Sigma,\beta)}$.
\end{corollary}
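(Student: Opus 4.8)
The plan is to prove both implications by assembling results already established in the paper, since the genuine work has been done in Theorem \ref{prop42} and Corollary \ref{corm2}. For the forward direction, I would simply invoke functoriality of $V$: the functor $V:\RG(\Lambda)\to\Modd(\KP(\Lambda))$ constructed in the subsection on the functor $V$ sends isomorphisms to isomorphisms, so an isomorphism $(\Delta,\alpha)\cong(\Sigma,\beta)$ in $\RG(\Lambda)$ immediately yields an isomorphism $V_{(\Delta,\alpha)}\cong V_{(\Sigma,\beta)}$ of $\KP(\Lambda)$-modules. This requires no further argument.

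For the reverse direction, I would start from a module isomorphism $\theta:V_{(\Delta,\alpha)}\to V_{(\Sigma,\beta)}$ and note that $\theta$ is in particular a nonzero $\KP(\Lambda)$-module homomorphism. Applying Theorem \ref{prop42} then forces $(\Delta,\alpha)$ and $(\Sigma,\beta)$ to lie in a common connected component $C$ of $\RG(\Lambda)$. The key structural input is now Corollary \ref{corm2}, which states that up to isomorphism $(\Gamma_C,\xi_C)$ is the \emph{unique} irreducible representation $k$-graph in $C$. Since both $(\Delta,\alpha)$ and $(\Sigma,\beta)$ are assumed irreducible and both belong to $C$, each is isomorphic to $(\Gamma_C,\xi_C)$, and hence they are isomorphic to one another.

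I do not expect a serious obstacle here, precisely because the two nontrivial theorems have already been proved: Theorem \ref{prop42} converts the purely module-theoretic hypothesis into membership in a single connected component, and Corollary \ref{corm2} rigidifies the irreducible objects within such a component. The only point to handle with care is the transition from an \emph{isomorphism} of modules to the \emph{nonzero homomorphism} hypothesis of Theorem \ref{prop42}, but this is immediate since an isomorphism is in particular nonzero (as the modules are nonzero, their bases $\Delta^{\ob}$ and $\Sigma^{\ob}$ being nonempty by the standing assumption that all $k$-graphs are nonempty). The resulting proof is therefore a short composition of prior results rather than a new computation.

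\begin{proof}
If $(\Delta,\alpha)\cong(\Sigma,\beta)$, then since $V$ is a functor it follows at once that $V_{(\Delta,\alpha)}\cong V_{(\Sigma,\beta)}$.

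Conversely, suppose $V_{(\Delta,\alpha)}\cong V_{(\Sigma,\beta)}$. Then there is a nonzero $\KP(\Lambda)$-module homomorphism $\theta:V_{(\Delta,\alpha)}\to V_{(\Sigma,\beta)}$, and so by Theorem \ref{prop42} the objects $(\Delta,\alpha)$ and $(\Sigma,\beta)$ lie in the same connected component $C$ of $\RG(\Lambda)$. Since $(\Delta,\alpha)$ and $(\Sigma,\beta)$ are irreducible, it follows from Corollary \ref{corm2} that both are isomorphic to $(\Gamma_C,\xi_C)$. Hence $(\Delta,\alpha)\cong(\Sigma,\beta)$.
\end{proof}
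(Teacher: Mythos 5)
Your proof is correct and follows exactly the same route as the paper's: functoriality of $V$ for the forward direction, and Theorem \ref{prop42} together with Corollary \ref{corm2} for the converse. The extra remark about the isomorphism being nonzero (since the vertex sets are nonempty) is a fine, if minor, point of added care.
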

\begin{proof}
Clearly $(\Delta,\alpha)\cong(\Sigma,\beta)$ implies $V_{(\Delta,\alpha)}\cong V_{(\Sigma,\beta)}$ since $V$ is a functor. Suppose now that $V_{(\Delta,\alpha)}\cong V_{(\Sigma,\beta)}$. Then, by Theorem \ref{prop42}, $(\Delta,\alpha)$ and $(\Sigma,\beta)$ lie in the same connected component $C$ of $\RG(\Lambda)$. It follows from Corollary \ref{corm2} that $(\Delta,\alpha)\cong (\Gamma_C,\xi_C)\cong (\Sigma,\beta)$.
\end{proof}

\subsection{Simplicity of the modules $V_{(\Delta,\alpha)}$}
In this subsection we show that the $\KP(\Lambda)$-module $V_{(\Delta,\alpha)}$ is simple if and only if $(\Delta,\alpha)$ is irreducible.

\begin{lemma}[{\cite[Lemma 63]{HPS}}]\label{lembasis}
Let $W$ be a $K$-vector space and $B$ a linearly independent subset of  $W$. Let $k_i\in K$ and $u_i,v_i\in B$, where $1\leq i \leq n$. Then $\sum_{i=1}^nk_i(u_i-v_i)\not\in B$.
\end{lemma}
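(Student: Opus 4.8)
The plan is to prove Lemma \ref{lembasis} by contradiction. Suppose, for the sake of contradiction, that $\sum_{i=1}^n k_i(u_i-v_i)\in B$; call this element $w$. The entire statement lives inside the vector space $W$ where $B$ is linearly independent, so the whole argument is a bookkeeping exercise in linear independence. First I would rearrange the assumed equation into the homogeneous form
\[
w-\sum_{i=1}^n k_i u_i+\sum_{i=1}^n k_i v_i=0,
\]
which is a linear relation among elements of $B$ (since $w\in B$ and every $u_i,v_i\in B$). The crux is that linear independence forces every coefficient, after collecting like terms, to vanish; but these coefficients are not all zero unless we can derive a contradiction from the structure of the relation.

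The main obstacle is that $B$ is a \emph{set}, so the elements $w, u_1,\dots,u_n,v_1,\dots,v_n$ need not be distinct, and one cannot simply read off coefficients from the displayed sum without first grouping repeated basis vectors together. Accordingly, the key step is to pass to the distinct elements of $B$ appearing in the relation. Let $b_1,\dots,b_m$ be the pairwise distinct elements of $\{w\}\cup\{u_i\}\cup\{v_i\}$, and for each $b_j$ let $c_j$ denote the total coefficient of $b_j$ in $w-\sum k_i u_i+\sum k_i v_i$ once like terms are combined. By linear independence of $B$ we must have $c_j=0$ for every $j$. The contradiction will be obtained by examining the particular $b_{j_0}=w$: the coefficient $c_{j_0}$ receives a contribution of $+1$ from the term $w$ itself, together with contributions of $-k_i$ for each $i$ with $u_i=w$ and $+k_i$ for each $i$ with $v_i=w$.

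The decisive observation is then a parity-style cancellation argument tailored to the telescoping structure $u_i-v_i$. I would track how $w$ can arise among the $u_i$ and $v_i$, and show that the $+1$ coming from $w\in B$ cannot be cancelled. Concretely, set
\[
S=\{i\mid u_i=w\},\qquad T=\{i\mid v_i=w\},
\]
so that $c_{j_0}=1-\sum_{i\in S}k_i+\sum_{i\in T}k_i$, and the requirement $c_{j_0}=0$ reads $\sum_{i\in S}k_i-\sum_{i\in T}k_i=1$. The heart of the proof is to show this is incompatible with the vanishing of \emph{all} the other coefficients $c_j$; here one exploits that each index $i$ contributes the \emph{pair} $(+k_i\text{ at }u_i,\ -k_i\text{ at }v_i)$, so summing all coefficients $\sum_j c_j$ causes every $k_i$-contribution to cancel and leaves exactly the coefficient of $w$, namely $1$, forcing $\sum_j c_j=1\neq 0$. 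This contradicts $c_j=0$ for all $j$. I expect the only delicate point to be the careful accounting when $w$ itself coincides with some $u_i$ or $v_i$, and I would handle this by the global sum $\sum_j c_j$, which sidesteps case analysis entirely since the paired contributions of each $u_i-v_i$ annihilate one another regardless of coincidences.
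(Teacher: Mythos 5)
Your proof is correct. The paper does not actually prove this lemma — it imports it verbatim from \cite[Lemma 63]{HPS} — and your global sum-of-coefficients argument (equivalently: apply the linear functional on $\operatorname{span}(B)$ sending every element of $B$ to $1$, under which $\sum_{i=1}^n k_i(u_i-v_i)$ maps to $0$ while any element of $B$ maps to $1$) is the standard, watertight way to prove it, handling all coincidences among $w$, the $u_i$ and the $v_i$ without case analysis.
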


%In the following theorem we show that a representation graph $\Delta$ is irreducible (see Definition~\ref{defrepirres}) if and only if $V_\Delta$ is a simple $\KP(\Lambda)$-module. 

\begin{theorem}\label{thmirr} Let $(\Delta,\alpha)$ be an object of $\RG(\Lambda)$
Then the following are equivalent.
\begin{enumerate}[\upshape(i)]
\item $V_{(\Delta,\alpha)}$ is simple.
\smallskip
\item For any $x\in V_{(\Delta,\alpha)}\setminus\{0\}$ there is an $a\in \KP(\Lambda)$ such that $a.x \in \Delta^{\ob}$.
\smallskip
\item For any $x\in V_{(\Delta,\alpha)}\setminus\{0\}$ there is a $k\in K$ and a $p\in \Walk(\Lambda), $ such that $kp.x\in \Delta^{\ob}$.
\smallskip
\item $(\Delta,\alpha)$ is irreducible.
\end{enumerate}
\end{theorem}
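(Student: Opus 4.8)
I would prove the four conditions equivalent via the cycle $\text{(i)}\Rightarrow\text{(ii)}\Rightarrow\text{(iii)}\Rightarrow\text{(iv)}\Rightarrow\text{(i)}$. The implications $\text{(ii)}\Rightarrow\text{(iii)}$ and the reverse packaging are largely bookkeeping; the two implications carrying the real content are $\text{(iii)}\Rightarrow\text{(iv)}$ and $\text{(iv)}\Rightarrow\text{(i)}$. The guiding intuition is that $\Delta^{\ob}$ is a basis of $V_{(\Delta,\alpha)}$, the monomials $p\in\Walk(\Lambda)$ act as partial permutations of this basis (by Lemmas~\ref{lemwell} and~\ref{lemaction}, each $p$ sends each basis vector either to $0$ or to a single basis vector), and irreducibility is exactly the condition that distinct vertices can be separated by the images of their walk-sets.

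\emph{The easy implications.} For $\text{(i)}\Rightarrow\text{(ii)}$: if $V_{(\Delta,\alpha)}$ is simple, then for any nonzero $x$ the submodule $\KP(\Lambda).x$ is everything, so in particular it contains a basis vertex $u\in\Delta^{\ob}$, giving $a.x=u$ for some $a$. The step $\text{(ii)}\Rightarrow\text{(iii)}$ writes $a=\sum_p k_p p$ as a $K$-linear combination of walks and applies Corollary~\ref{coraction}: since $a.x\in\Delta^{\ob}$ is a single basis vector and each monomial $p$ acts by a partial permutation, I expect to argue (using Lemma~\ref{lembasis} to rule out nontrivial cancellation producing a basis vector) that a single scaled monomial $kp$ already does the job. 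This is where Lemma~\ref{lembasis} is used, so I would be careful to reduce $a.x$ to the form to which that lemma applies.

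\emph{The implication $\text{(iii)}\Rightarrow\text{(iv)}$.} This I would prove contrapositively. If $(\Delta,\alpha)$ is not irreducible, there exist $u\neq v$ with $\alpha(\Walk_u(\Delta))=\alpha(\Walk_v(\Delta))$. Consider $x:=u-v\neq 0$. For any $p\in\Walk(\Lambda)$, by Lemma~\ref{lemaction} both $p.u$ and $p.v$ are determined by membership of $p$ in the (common) set $\alpha(\Walk_u(\Delta))=\alpha(\Walk_v(\Delta))$: either $p$ lies in this set and $p.u,p.v$ are both nonzero basis vertices, or $p$ lies outside and both are $0$. In either case $kp.x=k(p.u-p.v)$ is either $0$ or a difference of two basis vertices (which are distinct by Corollary~\ref{corwell}), hence never a single basis vertex, by Lemma~\ref{lembasis}. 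This contradicts~(iii), so~(iii) forces irreducibility. The one subtlety to check is that $p.u\neq p.v$ whenever they are nonzero, which is exactly Corollary~\ref{corwell}.

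\emph{The implication $\text{(iv)}\Rightarrow\text{(i)}$ --- the main obstacle.} This is the hard direction. Assuming $(\Delta,\alpha)$ irreducible, I must show every nonzero submodule is all of $V_{(\Delta,\alpha)}$. Take $0\neq x=\sum_{i=1}^n k_i u_i$ with $k_i\in K^\times$ and distinct $u_i\in\Delta^{\ob}$; I want to produce a single basis vertex inside $\KP(\Lambda).x$, and then show one vertex generates everything. The plan is an induction on the number $n$ of terms: the goal is to find a monomial $p$ that ``survives'' on exactly one of the $u_i$ and kills the others, collapsing $x$ to a scalar times a single vertex. Here irreducibility is the crucial input: since $u_1\neq u_2$, we have $\alpha(\Walk_{u_1}(\Delta))\neq\alpha(\Walk_{u_2}(\Delta))$, so there is a walk $p$ lying in one of these sets but not the other; acting by $p$ (or by $p^*p$-type expressions built to return to vertices) strictly reduces the support while keeping $x$ nonzero, and I iterate. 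The delicate points are (a) ensuring the reduction step genuinely decreases $n$ without annihilating everything, which requires choosing $p$ to separate a pair of vertices with \emph{distinct} walk-image sets --- guaranteed by irreducibility --- and (b) the base case $n=1$, where $x=ku$ and I must show $\KP(\Lambda).u=V_{(\Delta,\alpha)}$, i.e. every vertex $w$ is reachable from $u$. For (b) I would use that $\Delta$ is connected together with relation~(KP4): given a walk from $u$ to $w$ in $\Delta$, its $\alpha$-image acts on $u$ to yield $w$, and the defining relations of $\KP(\Lambda)$ let me realise both forward edges and starred edges as operators moving $u$ along the walk. I expect the bulk of the work, and the real risk of a gap, to lie in formalising the support-reduction induction in (a), since one must track carefully that the separating walk does not simultaneously kill the term one wishes to keep.
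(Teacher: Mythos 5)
Your cycle breaks at (ii)$\Rightarrow$(iii), and the break is not a matter of missing detail: the pointwise statement you are trying to prove there is false. You propose, given $a=\sum_j k_jp_j$ with $a.x\in\Delta^{\ob}$, to use the partial-permutation action of walks together with Lemma~\ref{lembasis} to extract a single scaled monomial $kp$ with $kp.x\in\Delta^{\ob}$. Here is a counterexample to that extraction. Let $\Lambda$ be the $1$-graph with one vertex $v$ and one loop $\lambda$ (the $\Lambda$ of Example~\ref{excat11}), and let $(\Delta,\alpha)$ be the representation $1$-graph in which $\Delta$ is the cycle of length $3$: vertices $z_0,z_1,z_2$, one edge $z_i\to z_{i+1\bmod 3}$ over $\lambda$; conditions (i) and (ii) of Definition~\ref{defwp} are immediate. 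Every walk $p\in\Walk(\Lambda)$ acts on $V_{(\Delta,\alpha)}$ as a cyclic shift $z_i\mapsto z_{i+j}$ for some fixed $j$, and no walk annihilates any vertex; hence for $x=z_0+z_1$ every element $kp.x=k(z_j+z_{j+1})$ has support of size exactly $2$, so (iii) fails at this $x$. Nevertheless, if $\operatorname{char}K\neq 2$, then $a=\tfrac{1}{2}\,(1_v-\lambda+\lambda\circ\lambda)$ gives $a.x=\tfrac{1}{2}\bigl((z_0+z_1)-(z_1+z_2)+(z_2+z_0)\bigr)=z_0\in\Delta^{\ob}$, so (ii) holds at this $x$. (This is consistent with the theorem: (ii) as a global statement fails for this $(\Delta,\alpha)$, witnessed by $z_0+z_1+z_2$, which every walk fixes.) So the implication (ii)$\Rightarrow$(iii) is only true as an implication between the two universally quantified statements; it cannot be established by inspecting the given $a$ and $x$, and no refinement of the cancellation analysis via Lemma~\ref{lembasis} will rescue your step.

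The repair is to reassemble your (correct) ingredients into the cycle the paper uses: (iii)$\Rightarrow$(ii) (trivial), (ii)$\Rightarrow$(i), (i)$\Rightarrow$(iv), (iv)$\Rightarrow$(iii). Your base case (b) is exactly the paper's (ii)$\Rightarrow$(i): by (ii) any nonzero submodule contains a vertex, and connectedness of $\Delta$ plus Lemma~\ref{lemaction} then puts every vertex in it. Your support-reduction induction (a) is exactly the paper's (iv)$\Rightarrow$(iii); note that the composite $p_m\cdots p_1$ of walks is again a single walk, so the reduction genuinely lands in (iii), not merely in (ii). Finally, your (iii)$\Rightarrow$(iv) must be upgraded to the paper's (i)$\Rightarrow$(iv): assuming $u\neq v$ with $\alpha(\Walk_u(\Delta))=\alpha(\Walk_v(\Delta))$ and simplicity, one gets $a.(u-v)=v$ for a \emph{general} element $a=\sum_i k_ip_i$, and then $v=\sum_i k_i(u_i-v_i)$ contradicts Lemma~\ref{lembasis}; this is precisely where the full strength of that lemma (sums of differences of basis vectors are never basis vectors) is needed, whereas your single-monomial version only ever used the case $n=1$. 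With your original link set $\{$(i)$\Rightarrow$(ii), (iii)$\Rightarrow$(iv), (iv)$\Rightarrow$(i)$\}$ there is no way to reach (iii) or (iv) from (i) or (ii), so a bridge of the type (i)$\Rightarrow$(iv) is unavoidable; once it is in place, your three links become redundant.
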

%\RH{left to be changed to right action} 
\begin{proof} 
(i) $\Longrightarrow$ (iv). Assume that there are $u\neq v\in \Delta^{\ob}$ such that $\alpha(\Walk_u(\Delta))= \alpha(\Walk_v(\Delta))$. Consider the submodule $\KP(\Lambda).(u-v)\subseteq V_{(\Delta,\alpha)}$. Since $V_{(\Delta,\alpha)}$ is simple by assumption, we have $\KP(\Lambda).(u-v)=V_{(\Delta,\alpha)}$. Hence there is an $a\in \KP(\Lambda)$ such that $a.(u-v)=v$. Clearly there is an $n\geq 1$, $k_1,\dots,k_n\in K^\times$ and pairwise distinct $p_1,\dots,p_n\in \Walk(\Lambda)$ such that $a=\sum_{i=1}^nk_ip_i$. We may assume that $p_i.(u-v)\neq 0$, for any $1\leq i\leq n$. It follows that $p_i\in \alpha(\Walk_u(\Delta))= \alpha(\Walk_v(\Delta))$, for any $i$ and moreover, that $p_i.(u-v)=u_i-v_i$ for some distinct $u_i,v_i\in \Delta^{\ob}$. Hence
\[v=a.(u-v)=(\sum_{i=1}^nk_ip_i).(u-v)=\sum_{i=1}^nk_i(u_i-v_i)\]
which contradicts Lemma \ref{lembasis}.

\medskip 
(iv) $\Longrightarrow$ (iii). Let $x\in V_{(\Delta,\alpha)}\setminus \{0\}$. Then there is an $n\geq 1$, $k_1,\dots,k_n\in K^\times$ and pairwise disjoint $v_1,\dots, v_n\in \Delta^{\ob}$ such that $x=\sum_{i=1}^nk_iv_i$. If $n=1$, then $k_1^{-1}\alpha(v_1).x = v_1$. Suppose now that $n>1$. By assumption, we can choose a $p_1\in \alpha(\Walk_{v_1}(\Delta))$ such that $p_1\not\in\alpha(\Walk_{v_2}(\Delta))$. Clearly $p_1.x\neq 0$ is a linear combination of at most $n-1$ vertices from $\Delta^{\ob}$. Proceeding this way, we obtain walks $p_1,\dots,p_m$ such that $p_m\dots p_1.x=kv$ for some $k\in K^\times$ and $v\in \Delta^{\ob}$. Hence $k^{-1}p_m\dots p_1.x=v$.

\medskip 

(iii) $\Longrightarrow$ (ii). This implication is trivial.

\medskip 

(ii) $\Longrightarrow$ (i). Let $U\subseteq V_{(\Delta,\alpha)}$ be a nonzero $\KP(\Lambda)$-submodule and $x\in U\setminus\{0\}$. By assumption, there is an $a\in \KP(\Lambda)$ and a $v\in \Delta^{\ob}$ such that $v=a.x\in U$. Let now $v'$ be an arbitrary vertex in $\Delta^{\ob}$. Since $\Delta$ is connected, there is a $p\in {}_{v'}\!\Walk_{v}(\Delta)$. It follows that $v'=\alpha(p).v\in U$. Hence $U$ contains $\Delta^{\ob}$ and thus $U=V_{(\Delta,\alpha)}$.
%(iv) $\Longleftrightarrow$ (v). This follows from the fact that $\alpha(e^*)=\alpha(e)^*$, $e\in \Delta_d^1$ and ${}_u\Path(\Delta_d)^*={}\Path_u(\Delta_d)$, for any $u\in \Delta^{\ob}$. 
\end{proof}

\section{Indecomposability of the modules $V_{(\Omega_C,\zeta_C)}$}
In this section $\Lambda$ denotes a fixed $k$-graph and $C$ a connected component of $\RG(\Lambda)$. $G$ denotes the fundamental group $\pi(\Gamma_C,y)$ at some fixed vertex $y\in \Gamma_C^{\ob}$, and $KG$ the group algebra of $G$ over $K$. Recall that for a ring $R$, an $R$-module is called \emph{indecomposable} if it is nonzero and cannot be written as a direct sum of two nonzero submodules. It is easy to see that an $R$-module $M$ is indecomposable if and only if $\End_R(M)$ has no {\it nontrivial} idempotents, i.e. idempotents distinct from $0$ and $1$.  We will show that 
\begin{align}
V_{(\Omega_C,\zeta_C)}\text{ is indecomposable}~\Leftrightarrow~ KG \text{ has no nontrivial idempotents.}
\end{align}
In order to prove (1) we will define a subspace $W\subseteq V_{(\Omega_C,\zeta_C)}$ and a subalgebra $A\subseteq \KP(\Lambda)$ such that $W$ is a left $A$-module with the induced action. We will show that
\vspace{0.3cm}
\begin{equation}
\End_{\KP(\Lambda)}(V_{(\Omega,\zeta)})\text{ has no nontrivial idempotents}~\Leftrightarrow~\End_{\bar A}(W)\text{ has no nontrivial idempotents,}
\end{equation}
\vspace{-0.4cm}
\begin{equation}
W\text{ is free of rank }1 \text{ as an }\bar A\text{-module,}
\end{equation}
\vspace{-0.3cm}
\begin{equation}
\bar A \text{ is isomorphic to KG}
\end{equation}
\\
where $\bar A= A/\Ann(W)$. Clearly (2)-(4) imply (1).

In the following we may write $(\Omega,\zeta)$ instead of $(\Omega_C,\zeta_C)$ and $(\Gamma,\xi)$ instead of $(\Gamma_C,\xi_C)$. Recall that $(\Omega,\zeta)$ was defined as follows. An object $(\Delta,\alpha)$ in $C$ was chosen and $(\Omega,\tau)$ was defined as a universal covering of $\Delta$. $\zeta$ was defined as $\alpha\circ\tau$. By Theorem \ref{thmm2} there is a morphism $(\Delta,\alpha)\to(\Gamma,\xi)$ in $\RG(\Lambda)$. It follows from Lemma \ref{lemrepell} that there is a $k$-graph morphism $\eta:\Omega\to\Gamma$ such that $(\Omega,\eta)$ is a universal covering of $\Gamma$ and $\zeta=\xi\circ \eta$. Hence the diagram
\[\xymatrix{
\Omega\ar[r]^\eta\ar@/^2pc/[rr]^\zeta&\Gamma\ar[r]^\xi&\Lambda
}\]
commutes.

We fix a $y\in \Gamma^{\ob}$ and denote the linear subspace of $V_{(\Omega,\zeta)}$ with basis $\eta^{-1}(y)$ by $W$. Moreover, we denote the subalgebra of $\KP(\Lambda)$ consisting of all $K$-linear combination of elements of $\xi({}_y\!\Walk_y(\Gamma))$ by $A$ (note that $\xi({}_y\!\Walk_y(\Gamma))\subseteq \Walk(\Lambda)$). One checks easily that the action of $\KP(\Lambda)$ on $V_{(\Omega,\zeta)}$ induces an action of $A$ on $W$, making $W$ a left $A$-module. Set $\bar A:=A/\Ann(W)$ and let $~\bar~:A\to \bar A,~a\mapsto \bar a$ be the canonical algebra homomorphism. The action of $A$ on $W$ induces an action of $\bar A$ on $W$ making $W$ a left $\bar A$-module. 

If $p\in \xi({}_y\!\Walk_y(\Gamma))$, then there is a unique $\hat p\in {}_y\!\Walk_y(\Gamma)$ such that $\xi(\hat p)=p$ (the uniqueness follows from Lemma \ref{lemwell}). Since $(\Omega,\eta)$ is a covering of $\Gamma$, there is for any $x\in \eta^{-1}(y)$ a unique $\tilde p_x\in \Walk_x(\Omega)$ such that $\eta(\tilde p_x)=\hat p$. We define a semigroup homomorphism $f:\xi({}_y\!\Walk_y(\Gamma))\to \pi(\Gamma,y)$ by $f(p)=[\hat p]$.

\begin{lemma}\label{lemcous}
Let $x\in \Omega^{\ob}$ and $p,q\in \Walk_x(\Omega)$. Then $r(p)=r(q)$ if and only if $[\eta(p)]=[\eta(q)]$ in $\G(\Gamma)$.
\end{lemma}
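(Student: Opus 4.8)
The plan is to prove both directions by exploiting the covering property of $(\Omega,\eta)$ over $\Gamma$ together with the fundamental groupoid machinery. The statement relates a \emph{local} condition in $\Omega$ (two walks from a common source $x$ having the same range) to an \emph{algebraic} condition in $\G(\Gamma)$ (their $\eta$-images being equal as morphisms of the fundamental groupoid). The natural strategy is to reduce the question about walks $p,q$ in $\Omega$ to a question about their images $\eta(p),\eta(q)$ in $\Gamma$, since $\eta$ is a covering and coverings are well-behaved with respect to lifting and uniqueness of walks.

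First I would prove the easier direction $(\Leftarrow)$: suppose $[\eta(p)]=[\eta(q)]$ in $\G(\Gamma)$. Since $[\eta(p)]$ and $[\eta(q)]$ are equal as morphisms of the groupoid, they have the same source and the same range; in particular $r(\eta(p))=r(\eta(q))$ as vertices of $\Gamma$. The goal $r(p)=r(q)$ in $\Omega$ should then follow from the uniqueness part of the path-lifting property for the covering $(\Omega,\eta)$. Concretely, both $p$ and $q$ start at the same vertex $x$, and both $\eta(p),\eta(q)$ represent the same morphism class; I would argue that $p$ and $q$ are the unique lifts at $x$ of $\sim_R$-equivalent walks, and that lifts of $\sim_R$-related walks ending at a common vertex must themselves end at a common vertex. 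This is essentially the statement that the lift of a walk depends only on its homotopy class together with its starting point, which is the defining feature of covering spaces; here it is underwritten by the covering conditions (i) and (ii) in Definition \ref{defcovering} applied repeatedly along the walk.

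For the harder direction $(\Rightarrow)$, suppose $r(p)=r(q)=:w$ in $\Omega$. I would like to conclude $[\eta(p)]=[\eta(q)]$ in $\G(\Gamma)$. The key idea is to consider the walk $p q^*$ (or $q^* p$), which is a walk from $x$ to $x$, i.e. an element of $x\Walk_x(\Omega)$, and to use the relationship between loops in $\Omega$ and the fundamental group. Since $(\Omega,\eta)$ is a \emph{universal} covering of $\Gamma$ (as established in the discussion preceding Lemma \ref{lemcous}), the fundamental group $\pi(\Omega,x)$ is trivial, so every closed walk at $x$ in $\Omega$ is $\sim_R$-equivalent to the trivial walk. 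Applying the functoriality of $\Walk(\cdot)\to\G(\cdot)$ under $\eta$, the class $[\eta(pq^*)]$ in $\pi(\Gamma,\eta(x))$ equals $\eta_*[pq^*]=\eta_*(1)=1$, the identity. Hence $[\eta(p)][\eta(q)]^{-1}=1$ in $\G(\Gamma)$, which gives $[\eta(p)]=[\eta(q)]$.

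The main obstacle I expect is making the passage between \emph{walks} and their \emph{groupoid classes} fully rigorous, since the statement mixes the two levels: $r(p)=r(q)$ is a statement about actual walks, while $[\eta(p)]=[\eta(q)]$ is about equivalence classes. In particular, in the $(\Leftarrow)$ direction one must be careful that $[\eta(p)]=[\eta(q)]$ controls not just the endpoints in $\Gamma$ but forces equality of endpoints \emph{after lifting} to $\Omega$; this is exactly where the universality of the covering (triviality of $\pi(\Omega,x)$) is needed, rather than just the covering property alone, since a nontrivial deck group could in principle send one lift's endpoint to a different vertex. I would therefore lean on \cite[Theorems 2.2, 2.7, 2.8]{PQR} and the injectivity of $\eta_*$ to pin down that the correspondence between closed walks at $\eta(x)$ in $\Gamma$ and their unique lifts at $x$ in $\Omega$ respects both the groupoid structure and the endpoint data, and verify that the two directions are genuine converses of this single lifting correspondence.
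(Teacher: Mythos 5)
Your proposal follows essentially the same route as the paper's proof. The forward direction is identical: from $r(p)=r(q)$ you form the closed walk $pq^*\in{}_x\!\Walk_x(\Omega)$ and use \cite[Theorem 2.7]{PQR}, which gives $\eta_*\pi(\Omega,x)=\{[\eta(x)]\}$ because $(\Omega,\eta)$ is a universal covering, to conclude $[\eta(p)]=[\eta(q)]$. Your backward direction is also the paper's argument in substance: the fact that the endpoint of a lift depends only on its starting vertex and the $\sim_R$-class of the walk downstairs is precisely the injectivity of the map $\G(\Omega)x\to\G(\Gamma)\eta(x)$ induced by $\eta$, which is what the paper invokes.

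One conceptual correction, however: you assert that the $(\Leftarrow)$ direction is exactly where universality (triviality of $\pi(\Omega,x)$) is needed, \emph{rather than} the covering property alone, because a nontrivial deck group could move a lift's endpoint. This has the roles of the two hypotheses reversed. Universality enters only in $(\Rightarrow)$, to force the class of the loop $pq^*$ to map to the identity; the $(\Leftarrow)$ direction holds for an \emph{arbitrary} covering, since the induced morphism of fundamental groupoids of any covering is injective on each star --- this is the unique-lifting/monodromy statement you yourself describe as underwritten by conditions (i) and (ii) of Definition \ref{defcovering}, and it is all the paper uses. The deck-group worry is vacuous here: since $\Omega$ is connected, a nontrivial deck transformation moves the base vertex $x$ itself, whereas both $p$ and $q$ start at the same $x$, so uniqueness of lifts leaves no residual ambiguity in their endpoints. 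Because $(\Omega,\eta)$ is in fact universal, this misattribution does not make your proof incorrect, but it obscures which hypothesis carries each direction, and a reader following your sketch could waste effort trying to derive the monodromy property from triviality of $\pi(\Omega,x)$, which is not where it comes from.
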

\begin{proof}
($\Rightarrow$) Suppose that $r(p)=r(q)$. Then $pq^*\in {}_x\!\Walk_x(\Omega)$. By \cite[Theorem 2.7]{PQR} we have $\eta_*\pi(\Omega,x)=\{[\eta(x)]\}$. It follows that $[\eta(p)\eta(q)^*]=\eta_*[pq^*]=[\eta(x)]$ in $\pi(\Gamma,\eta(x))$ whence $[\eta(p)]=[\eta(q)]$.\\
($\Leftarrow$). Suppose that $[\eta(p)]=[\eta(q)]$ in $\G(\Gamma)$. Since $(\Omega,\eta)$ is a covering of $\Gamma$, the map $\G(\Omega)x\to \G(\Gamma)\eta(x)$ induced by $\eta$ is injective. Hence $[p]=[q]$ in $\G(\Omega)$. Since equivalent walks have the same range, we obtain $r(p)=r(q)$.
\end{proof}

The lemma below follow from Lemma \ref{lemcous}.

\begin{lemma}\label{lemzedp}
Let $p,q\in \xi({}_y\!\Walk_y(\Gamma))$. Then the following are equivalent.
\begin{enumerate}[(i)]
\item $f(p)=f(q)$.
\item $r(\tilde p_x)=r(\tilde q_x)$ for some $x\in\eta^{-1}(y)$.
\item $r(\tilde p_x)=r(\tilde q_x)$ for any $x\in\eta^{-1}(y)$.
\end{enumerate}
\end{lemma}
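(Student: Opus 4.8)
The plan is to prove the chain (i) $\Rightarrow$ (iii) $\Rightarrow$ (ii) $\Rightarrow$ (i), since the last two implications will be essentially immediate and the main content lies in showing that equality $f(p)=f(q)$ in the fundamental group forces the lifts $\tilde p_x$ and $\tilde q_x$ to share a common range at \emph{every} basepoint $x\in\eta^{-1}(y)$. Throughout I would exploit the key defining property of the lifts: for $p\in\xi({}_y\!\Walk_y(\Gamma))$, the walk $\tilde p_x\in\Walk_x(\Omega)$ is the unique lift with $\eta(\tilde p_x)=\hat p$, where $\hat p$ is the unique walk in ${}_y\!\Walk_y(\Gamma)$ with $\xi(\hat p)=p$. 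The bridge to Lemma \ref{lemcous} is the observation that $\eta(\tilde p_x)=\hat p$ and $\eta(\tilde q_x)=\hat q$, so that $[\eta(\tilde p_x)]=[\hat p]=f(p)$ and likewise $[\eta(\tilde q_x)]=f(q)$ in $\G(\Gamma)$.

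For (i) $\Rightarrow$ (iii), I would fix an arbitrary $x\in\eta^{-1}(y)$ and apply Lemma \ref{lemcous} to the two walks $\tilde p_x,\tilde q_x\in\Walk_x(\Omega)$, which both start at $x$. The hypothesis $f(p)=f(q)$ says $[\hat p]=[\hat q]$ in $\pi(\Gamma,y)$, and since $\eta(\tilde p_x)=\hat p$ and $\eta(\tilde q_x)=\hat q$ we get $[\eta(\tilde p_x)]=[\eta(\tilde q_x)]$ in $\G(\Gamma)$. The ``if'' direction of Lemma \ref{lemcous} then yields $r(\tilde p_x)=r(\tilde q_x)$. Because $x$ was arbitrary, this holds for every $x\in\eta^{-1}(y)$, giving (iii). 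The implication (iii) $\Rightarrow$ (ii) is trivial, since $\eta^{-1}(y)$ is nonempty (as $y\in\Gamma^{\ob}$ and $\eta$ is a covering, hence surjective on objects).

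For (ii) $\Rightarrow$ (i), I would take the witness $x\in\eta^{-1}(y)$ with $r(\tilde p_x)=r(\tilde q_x)$ and reverse the previous argument: the ``only if'' direction of Lemma \ref{lemcous} gives $[\eta(\tilde p_x)]=[\eta(\tilde q_x)]$ in $\G(\Gamma)$, i.e. $[\hat p]=[\hat q]$, which is exactly $f(p)=f(q)$. I anticipate the only genuine subtlety to be the clean identification $[\eta(\tilde p_x)]=[\hat p]=f(p)$, which rests on the uniqueness clauses: $\hat p$ is uniquely determined by $\xi(\hat p)=p$ via Lemma \ref{lemwell}, and $\tilde p_x$ is uniquely determined by the covering lifting property of $(\Omega,\eta)$. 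Once these identifications are recorded, the equivalence is a direct transcription of Lemma \ref{lemcous} into the language of $f$, so the proof should be short.
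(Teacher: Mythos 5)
Your proof is correct and follows essentially the same route as the paper: the paper simply observes that the lemma follows from Lemma \ref{lemcous}, via the identification $\eta(\tilde p_x)=\hat p$ and the chain (i) $\Rightarrow$ (iii) $\Rightarrow$ (ii) $\Rightarrow$ (i). Your explicit bookkeeping of the uniqueness clauses behind $\hat p$ and $\tilde p_x$ is exactly the intended argument, just written out in full.
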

\begin{comment}
\begin{proof}
$(i)\Rightarrow(iii)$. Suppose that $f(p)=f(q)$, i.e. $[\hat p]=[\hat q]$ in $\pi(\Gamma,y)$. Let $x\in\eta^{-1}(y)$. Since $(\Omega,\eta)$ is a covering of $\Gamma$, the group homomorphism $\eta_*:\pi(\Omega,x)\to \pi(\Gamma,y)$ is injective. Hence $[\tilde p_x]=[\tilde q_x]$ in $\pi(\Omega,x)$. Since equivalent walks have the same source and range, it follows that $r(\tilde p_x)=r(\tilde q_x)$.\\
$(iii)\Rightarrow(ii)$. This implication is trivial.\\
$(ii)\Rightarrow (i)$. Suppose that $r(\tilde p_x)=r(\tilde q_x)$ for some $x\in\eta^{-1}(y)$. Then $\tilde p_x\tilde q_x^*\in {}_x\!\Walk_x(\Omega)$ and $[\hat p\hat q^*]=\eta_*([\tilde p_x\tilde q_x^*])$. But $\eta_*\pi(\Omega,x)=\{[y]\}$ by \cite[Theorem 2.7]{PQR}. Hence $[\hat p\hat q^*]=[y]$ whence $[\hat p]=[\hat q]$, i.e. $f(p)=f(q)$.
\end{proof}
\end{comment}

We set $\Ann(x):=\{a\in A\mid a.x=0\}$ for any $x\in\eta^{-1}(y)$, and $\Ann(W):=\{a\in A\mid a.W=0\}$.

\begin{lemma}\label{lemann}
Let $x\in\eta^{-1}(y)$. Then 
\[\Ann(x)=\{\sum_{p\in\xi({}_y\!\Walk_y(\Gamma))}k_pp\in A\mid \sum_{\substack{p\in \xi({}_y\!\Walk_y(\Gamma)),\\r(\tilde p_x)=x'}}k_p=0 \quad \text{for any } x'\in \eta^{-1}(y)\}\]
\end{lemma}
\begin{proof}
Let $a=\sum_{p\in\xi({}_y\!\Walk_y(\Gamma))}k_pp\in A$. Then, in view of Lemma \ref{lemaction},
\begin{align*}
&\hspace{2.35cm}a\in\Ann(x)&\\
&\hspace{1.45cm}\Leftrightarrow~\sum_{p\in\xi({}_y\!\Walk_y(\Gamma))}k_pp.x=0&
\end{align*}
\begin{align*}
\Leftrightarrow~&\sum_{x'\in\eta^{-1}(y)}(\sum_{\substack{p\in \xi({}_y\!\Walk_y(\Gamma)),\\r(\tilde p_x)=x'}}k_p)x'=0\\
\Leftrightarrow~&\sum_{\substack{p\in \xi({}_y\!\Walk_y(\Gamma)),\\r(\tilde p_x)=x'}}k_p=0 \quad\forall x'\in \eta^{-1}(y).
\end{align*}
\end{proof}

\begin{corollary}\label{corann}
$\Ann(W)=\Ann(x)$ for any $x\in\eta^{-1}(y)$.
\end{corollary}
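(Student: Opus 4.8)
The plan is to first reduce $\Ann(W)$ to an intersection of the annihilators $\Ann(x)$, and then to show that $\Ann(x)$ does not depend on the choice of $x\in\eta^{-1}(y)$. Since $W$ has basis $\eta^{-1}(y)$ and the action of $A$ is $K$-linear, an element $a\in A$ satisfies $a.W=0$ if and only if $a.x=0$ for every basis vector $x\in\eta^{-1}(y)$. Hence $\Ann(W)=\bigcap_{x\in\eta^{-1}(y)}\Ann(x)$, and it suffices to prove that all the sets $\Ann(x)$ coincide; the asserted equality then follows immediately.

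To prove independence of $x$, I would rewrite the description of $\Ann(x)$ furnished by Lemma \ref{lemann} in terms of the homomorphism $f$. Fix $x\in\eta^{-1}(y)$ and let $a=\sum_{p}k_pp\in A$, the sum ranging over $p\in\xi({}_y\!\Walk_y(\Gamma))$. By Lemma \ref{lemann}, $a\in\Ann(x)$ precisely when $\sum_{p:\,r(\tilde p_x)=x'}k_p=0$ for every $x'\in\eta^{-1}(y)$. The key observation is that, by Lemma \ref{lemzedp}, two indices $p,q$ satisfy $r(\tilde p_x)=r(\tilde q_x)$ if and only if $f(p)=f(q)$; consequently the nonempty fibres of the map $p\mapsto r(\tilde p_x)$ are exactly the fibres $f^{-1}(g)$, $g\in\pi(\Gamma,y)$, and this partition of $\xi({}_y\!\Walk_y(\Gamma))$ is the same for every $x$.

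It follows that the defining condition of $\Ann(x)$ can be restated, with no reference to $x$, as: $\sum_{p:\,f(p)=g}k_p=0$ for every $g\in\pi(\Gamma,y)$. Since this criterion is manifestly $x$-independent, we obtain $\Ann(x)=\Ann(x'')$ for all $x,x''\in\eta^{-1}(y)$. Combining this with the first paragraph yields $\Ann(W)=\bigcap_{x}\Ann(x)=\Ann(x)$ for any single $x\in\eta^{-1}(y)$, as claimed.

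The real content is carried entirely by Lemma \ref{lemzedp}: it is what guarantees that the block structure appearing in Lemma \ref{lemann} is governed by the fibres of $f$ rather than by the particular lift $\tilde p_x$. Once that is in hand the corollary is pure bookkeeping, and I anticipate no serious obstacle beyond checking that the assignment $x'=r(\tilde p_x)\leftrightarrow g=f(p)$ sets up a well-defined bijection between the nonempty blocks and the elements of the image of $f$, which is again immediate from Lemma \ref{lemzedp}.
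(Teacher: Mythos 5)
Your proof is correct and takes essentially the same route as the paper: the same reduction $\Ann(W)=\bigcap_{x\in\eta^{-1}(y)}\Ann(x)$, followed by the same combination of Lemma \ref{lemann} and Lemma \ref{lemzedp} to see that the partition of $\xi({}_y\!\Walk_y(\Gamma))$ into fibres of $p\mapsto r(\tilde p_x)$ does not depend on $x$. The only cosmetic difference is that you restate the annihilator criterion directly in terms of the fibres of $f$ (incidentally, exactly the $x$-free condition the paper uses later in Proposition \ref{propKG}), whereas the paper matches the two partitions for fixed $x_1,x_2$ via a permutation of $\eta^{-1}(y)$; the mathematical content is identical.
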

\begin{proof}
Since $\Ann(W)=\bigcap_{x\in\eta^{-1}(y)}\Ann(x)$, it suffices to show that $\Ann(x_1)=\Ann(x_2)$ for any $x_1,x_2\in \eta^{-1}(y)$. So let $x_1,x_2\in \eta^{-1}(y)$. For any $x\in \eta^{-1}(y)$ set $Y_{x}:= \{p\in \xi({}_y\!\Walk_y(\Gamma)\mid r(\tilde p_{x_1})=x\}$ and $Z_{x}:= \{p\in \xi({}_y\!\Walk_y(\Gamma)\mid r(\tilde p_{x_2})=x\}$. Clearly $\xi({}_y\!\Walk_y(\Gamma))=\bigsqcup_{x\in\eta^{-1}(y)} Y_x=\bigsqcup_{x\in\eta^{-1}(y)} Z_x$. It follows from Lemma \ref{lemzedp} that there is a permutation $\pi\in S(\eta^{-1}(y))$ such that $Y_x=Z_{\pi(x)}$ for any $x\in\eta^{-1}(y)$. Let now $a=\sum_{p\in\xi({}_y\!\Walk_y(\Gamma))}k_pp\in A$. Then, by Lemma \ref{lemann}, 
\begin{align*}
&a\in\Ann(x_1)\\
\Leftrightarrow~&\sum_{p\in Y_x} k_p=0~\forall x\in\eta^{-1}(y)\\
\Leftrightarrow~&\sum_{p\in Z_x} k_p=0~\forall x\in\eta^{-1}(y)\\
\Leftrightarrow~&a\in\Ann(x_2).
\end{align*}
\end{proof}

Recall that if $x,x'\in\Omega^{\ob}$, then $x\sim x'\Leftrightarrow \zeta(\Walk_x(\Omega))=\zeta(\Walk_{x'}(\Omega))$.

\begin{lemma}\label{lemalphsim}
$\eta^{-1}(y)$ is a $\sim$-equivalence class.
\end{lemma}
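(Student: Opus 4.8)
The plan is to show that the $\sim$-equivalence classes on $\Omega^{\ob}$ are exactly the fibres $\eta^{-1}(w)$, $w\in\Gamma^{\ob}$; the statement then follows immediately once we observe that $\eta$ is surjective on objects. The whole argument rests on transporting the relation $\sim$ across the covering map $\eta$ and then invoking the irreducibility of $(\Gamma,\xi)$.

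First I would establish the walk-lifting identity $\eta(\Walk_x(\Omega))=\Walk_{\eta(x)}(\Gamma)$ for every $x\in\Omega^{\ob}$. The inclusion $\subseteq$ is immediate, since $\eta$ is a functor and hence maps walks with source $x$ to walks with source $\eta(x)$. For $\supseteq$ I would lift a walk in $\Gamma$ with source $\eta(x)$ edge by edge, by induction on its length, using that $(\Omega,\eta)$ is a covering of $\Gamma$: a forward edge $\sigma\in\Gamma\eta(x)$ has a unique lift in $\Omega x$ by condition (i) of Definition \ref{defcovering}, while a reversed edge $\sigma^*$ with $s(\sigma^*)=\eta(x)$ (so $\sigma\in\eta(x)\Gamma$) lifts uniquely by condition (ii); in either case the lifted step starts at $x$ and the induction continues from its range. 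Combining this with $\zeta=\xi\circ\eta$ gives $\zeta(\Walk_x(\Omega))=\xi\bigl(\eta(\Walk_x(\Omega))\bigr)=\xi(\Walk_{\eta(x)}(\Gamma))$ for every $x$.

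Consequently, for $x,x'\in\Omega^{\ob}$ we have $x\sim x'$, i.e.\ $\zeta(\Walk_x(\Omega))=\zeta(\Walk_{x'}(\Omega))$, if and only if $\xi(\Walk_{\eta(x)}(\Gamma))=\xi(\Walk_{\eta(x')}(\Gamma))$, that is, if and only if $\eta(x)$ and $\eta(x')$ are identified by the relation $\sim$ on $\Gamma^{\ob}$. Now I would invoke Corollary \ref{corm2}: $(\Gamma,\xi)=(\Gamma_C,\xi_C)$ is irreducible, so by Definition \ref{defrepirres} the relation $\sim$ on $\Gamma^{\ob}$ is the equality relation. Hence $x\sim x'$ holds precisely when $\eta(x)=\eta(x')$, which says exactly that the $\sim$-classes are the nonempty fibres of $\eta$. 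Finally, since $\Gamma$ is connected and $(\Omega,\eta)$ is a covering, $\eta$ is surjective on objects (lift any walk in $\Gamma$ from $\eta(x)$ to $y$ and take the range of the lift), so $\eta^{-1}(y)\neq\emptyset$ and is therefore a single $\sim$-equivalence class.

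I expect the walk-lifting identity of the first paragraph to be the main obstacle. Although it is the $k$-graph analogue of the standard unique-path-lifting property of coverings, it must be set up carefully here, handling forward and reversed edges separately via the two covering conditions and checking that the lift remains a walk with the correct source at each inductive step. Everything after that is a formal consequence of $\zeta=\xi\circ\eta$ together with the irreducibility of $(\Gamma,\xi)$.
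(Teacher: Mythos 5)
Your proposal is correct and follows essentially the same route as the paper: identify $\zeta(\Walk_x(\Omega))$ with $\xi(\Walk_{\eta(x)}(\Gamma))$ and then use irreducibility of $(\Gamma,\xi)$ (Corollary \ref{corm2}), i.e.\ that $\sim$ on $\Gamma^{\ob}$ is the equality relation, to conclude that the $\sim$-classes are exactly the fibres of $\eta$. The only difference is bookkeeping: where you prove the walk-lifting identity $\eta(\Walk_x(\Omega))=\Walk_{\eta(x)}(\Gamma)$ directly from the covering axioms of Definition \ref{defcovering}, the paper obtains one containment from the ready-made Lemma \ref{lempath} and invokes the covering property only for the other, and your explicit verification that $\eta^{-1}(y)\neq\emptyset$ is left implicit there.
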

\begin{proof}
Choose an $x\in \eta^{-1}(y)$. Let $x'\in \Omega^{\ob}$ such that $x'\sim x$. It follows from Lemma \ref{lempath} that $\xi(\Walk_{\eta(x)}(\Gamma))=\zeta(\Walk_{x}(\Omega))=\zeta(\Walk_{x'}(\Omega))=\xi(\Walk_{\eta(x')}(\Gamma))$. Hence $\eta(x)\sim\eta(x')$. It follows that $\eta(x')=\eta(x)=y$ since $\Gamma$ is irreducible.\\
Let now $x'\in \eta^{-1}(y)$. Then $\eta(\Walk_x(\Gamma))=\eta(\Walk_{x'}(\Gamma))$ since $(\Omega,\eta)$ is a covering of $\Gamma$. Hence $\zeta(\Walk_x(\Gamma))=\zeta(\Walk_{x'}(\Gamma))$, i.e. $x\sim x'$. We have shown that $\eta^{-1}(y)=[x]_{\sim}$.
\end{proof}

We are ready to prove (2).

\begin{proposition}\label{propextend}
Any nontrivial idempotent endomorphism in $\End_{\KP(\Lambda)}(V_{(\Omega,\zeta)})$ restricts to a nontrivial idempotent endomorphism in $\End_{\bar A}(W)$. Any nontrivial idempotent endomorphism in $\End_{\bar A}(W)$ extends to a nontrivial idempotent endomorphism in $\End_{\KP(\Lambda)}(V_{(\Omega,\zeta)})$. 
\end{proposition}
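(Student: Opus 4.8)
The plan is to exhibit the correspondence between nontrivial idempotents on the two sides by exploiting the block decomposition of $V_{(\Omega,\zeta)}$ along the $\sim$-equivalence classes, which by Lemma \ref{lemalphsim} are precisely the fibers $\eta^{-1}(y')$, $y'\in\Gamma^{\ob}$. Writing $V_{y'}$ for the span of $\eta^{-1}(y')$, so that $V_{(\Omega,\zeta)}=\bigoplus_{y'}V_{y'}$ and $W=V_y$, the first observation is that \emph{every} $e\in\End_{\KP(\Lambda)}(V_{(\Omega,\zeta)})$ respects this decomposition. Indeed, applying Lemma \ref{lemmodule} with $(\Delta,\alpha)=(\Sigma,\beta)=(\Omega,\zeta)$ to $e$ shows that for each vertex $u$ the support of $e(u)$ consists of vertices $\sim$-equivalent to $u$; hence $e(V_{y'})\subseteq V_{y'}$ for all $y'$, and in particular $e(W)\subseteq W$.

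For the restriction direction, a nontrivial idempotent $e$ then restricts to a well-defined endomorphism $e|_W$ of $W$. It is $\bar A$-linear because $e$ is $\KP(\Lambda)$-linear, $A$ stabilises $W$, and $\Ann(W)$ acts as zero; it is idempotent because $e$ is. The only point needing argument is nontriviality, and here I would use that $\Omega$ is connected: fixing $x\in\eta^{-1}(y)$, any vertex $z$ is joined to $x$ by a walk $q$, so $\zeta(q).x=z$ by Lemma \ref{lemaction} and hence $e(z)=\zeta(q).e(x)$. Thus $e|_W=0$ forces $e(x)=0$ and then $e=0$; applying the same observation to the idempotent $\id-e$ shows that $e|_W=\id_W$ forces $e=\id$. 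Either way contradicts the nontriviality of $e$, so $e|_W$ is nontrivial.

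For the extension direction I would transport a given nontrivial idempotent $\bar e\in\End_{\bar A}(W)$ across the fibers using the covering $(\Omega,\eta)$ of $\Gamma$. Each walk $w'\in{}_{y'}\!\Walk_y(\Gamma)$ lifts uniquely at each point of $\eta^{-1}(y)$ and thereby induces a bijection $\beta_{w'}\colon\eta^{-1}(y)\to\eta^{-1}(y')$; since $\zeta=\xi\circ\eta$, Lemma \ref{lemaction} identifies this bijection with the action of $\xi(w')\in\Walk(\Lambda)$ on the fiber. I would then define $e$ on $V_{y'}$ by $e_{y'}:=\beta_{w'}\,\bar e\,\beta_{w'}^{-1}$ and set $e=\bigoplus_{y'}e_{y'}$. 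Taking $w'$ trivial for $y'=y$ gives $e|_W=\bar e$, each $e_{y'}$ is visibly idempotent, and $e$ is nontrivial because $\bar e$ is.

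The two steps I expect to be the crux are the well-definedness of $e_{y'}$ and the $\KP(\Lambda)$-linearity of $e$. For well-definedness, two choices $w'_1,w'_2\in{}_{y'}\!\Walk_y(\Gamma)$ differ by the loop $(w'_2)^*w'_1\in{}_y\!\Walk_y(\Gamma)$, whose action on $W$ is the permutation $\beta_{w'_2}^{-1}\beta_{w'_1}$ and lies in $A$; since $\bar e$ is $\bar A$-linear it commutes with this permutation, which is exactly the identity $\beta_{w'_1}\bar e\beta_{w'_1}^{-1}=\beta_{w'_2}\bar e\beta_{w'_2}^{-1}$. For $\KP(\Lambda)$-linearity it suffices to check $e(b.z)=b.e(z)$ for $z\in\eta^{-1}(y')$ and $b\in\Lambda\cup\Lambda^*$. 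When $b.z=z''\neq 0$, lifting the corresponding edge gives $\rho\in{}_{z''}\!\Walk_z(\Omega)$ with $\xi(\eta(\rho))=b$; computing $e(z'')$ with the walk $w''=\eta(\rho)\,w'$ and using $\xi(w'')=b\,\xi(w')$ together with $\beta_{w''}^{-1}(z'')=\beta_{w'}^{-1}(z)$ yields $e(z'')=b.e(z)$. When $b.z=0$, the support of $e(z)$ lies in $\eta^{-1}(y')$, so every vertex there is $\sim z$ and is annihilated by $b$ as well, giving $b.e(z)=0$. This completes both directions and hence the equivalence (2).
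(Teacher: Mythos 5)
Your proof is correct, and while its restriction half is essentially the paper's argument, the extension half takes a genuinely different route. For restriction, the paper also gets $\epsilon(W)\subseteq W$ from Lemmas \ref{lemmodule} and \ref{lemalphsim} and also uses connectedness of $\Omega$ for nontriviality (it picks $v$ with $\epsilon(v)\neq 0$ and $w$ with $\epsilon(w)\neq w$ and pulls both back to a basepoint of the fiber, which is just the contrapositive of your argument applied to $e$ and $\id-e$). For extension, the paper fixes a single $x\in\eta^{-1}(y)$ with $\epsilon_W(x)\neq 0$ and walks $p^v\in{}_v\!\Walk_x(\Omega)$, defines $\epsilon(v)=\zeta(p^v).\epsilon_W(x)$, and then spends most of the proof verifying $\KP(\Lambda)$-linearity by an explicit computation that needs the fundamental-groupoid Lemma \ref{lemcous} together with Lemma \ref{lemwithoutnumber}; $\bar A$-linearity of $\epsilon_W$ enters only to check that $\epsilon$ extends $\epsilon_W$, and idempotency requires a separate (short) verification. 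You instead transport all of $\bar e$ at once, fiber by fiber, conjugating by the bijections $\beta_{w'}$ obtained from unique walk lifting; the burden shifts to well-definedness, which you settle by noting that two choices of walk differ by the action of $\xi\bigl((w_2')^*w_1'\bigr)\in A$, with which $\bar e$ commutes. After that, extension of $\bar e$, idempotency, and linearity on generators are nearly formal; Lemma \ref{lemcous} is never invoked, and no choice of $x$ with $\epsilon_W(x)\neq 0$ is needed. The trade-off is that the paper's construction stays close to concrete walk computations in $\Omega$, whereas yours exploits the covering $\eta$ more structurally, in the style of a deck-transformation argument; your version makes it more transparent \emph{why} the extension exists (equivariance under the $A$-action on the fiber) at the cost of having to set up the fiber bijections and their compatibility with the module action. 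One small caveat: you cite Lemma \ref{lemalphsim} as saying that \emph{every} fiber $\eta^{-1}(y')$ is a $\sim$-class, while the lemma is stated only for the fixed basepoint $y$; this is harmless, since its proof works verbatim for any $y'$, and in fact your argument needs only the stated case for the restriction half, while the $b.z=0$ case of linearity uses only the easy containment (vertices in a common fiber are $\sim$-equivalent), which follows directly from the covering property of $\eta$.
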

\begin{proof}
Let $\epsilon\in \End_{\KP(\Lambda)}(V_{(\Omega,\zeta)})$ be a nontrivial idempotent endomorphism. It follows from Lemmas \ref{lemmodule} and \ref{lemalphsim} that $\epsilon(W)\subseteq W$. Hence $\epsilon|_W\in \End_{\bar A}(W)$. Clearly $\epsilon|_W$ is an idempotent. It remains to show that $\epsilon|_W$ is nontrivial. Since $\epsilon$ is nontrivial, there are $v,w\in \Omega^{ob}$ such that $\epsilon(v)\neq 0$ and $\epsilon(w)\neq w$. Let $x\in\eta^{-1}(y)$ and choose a $p\in \zeta({}_v\!\Walk_x(\Omega))$ and a $q\in \zeta({}_w\!\Walk_x(\Omega))$. Then $\epsilon(v)=\epsilon( p.x)=p.\epsilon(x)$ and $\epsilon(w)=\epsilon( q.x)=q.\epsilon(x)$. It follows that $\epsilon(x)\neq 0, x$. Thus $\epsilon|_W$ is nontrivial.\\
Suppose now that $\epsilon_W\in \End_{\bar A}(W)$ is a nontrivial idempotent endomorphism.  Choose an $x\in\eta^{-1}(y)$ such that $\epsilon_W(x)\neq 0$. Since $\Omega$ is connected, we can choose for any $v\in\Omega^{\ob}$ a $p^v\in {}_{v}\!\Walk_x(\Omega)$. Define an endomorphism  $\epsilon$ of the $K$-vector space $V_{(\Omega,\zeta)}$ by 
\[\epsilon(v)=\zeta(p^v).\epsilon_W(x)~\text{ for any }v\in\Omega^{\ob}.
\]
If $x'\in \eta^{-1}(y)$, then 
\[\epsilon(x')=\zeta(p^{x'}).\epsilon_W(x)=\overline{\zeta(p^{x'})}.\epsilon_W(x)=\epsilon_W(\overline{\zeta(p^{x'})}.x)=\epsilon_W(\zeta(p^{x'}).x)=\epsilon_W(x')\]
since $\zeta(p^{x'})\in\xi({}_y\!\Walk_y(\Gamma))$ and $\epsilon_W$ is $\bar A$-linear. Hence $\epsilon_W$ extends to $\epsilon$. Next we show that $\epsilon$ is $\KP(\Lambda)$-linear. Let $t\in \Walk(\Lambda)$ and $v\in \Omega^{\ob}$. We have to prove that 
\begin{equation}
\epsilon(t.v)=t.\epsilon(v).
\end{equation}
Clearly $\epsilon_W(x)=\sum\limits_{i=1}^nk_i x_i$ for some $n\geq 1$, $k_1,\dots,k_n\in K^\times$ and pairwise distinct $x_1,\dots,x_n\in \eta^{-1}(y)$. By Lemma \ref{lemalphsim} we have 
\begin{equation}
x\sim x_1\sim \dots \sim x_n.
\end{equation}
It follows that for any $u\in \Omega^{\ob}$ and $1\leq i\leq n$ there is a $p^{u}_i\in\Walk_{x_i}(\Omega)$ such that $\zeta(p^{u}_i)=\zeta(p^u)$. Set $v_i:=r(p^{v}_i)$ for any $1\leq i\leq n$. It follows from Lemma \ref{lemwithoutnumber} that 
\begin{equation}
v\sim v_1\sim\dots\sim v_n.
\end{equation}
Clearly 
\begin{equation}
t.\epsilon(v)=t.(\zeta(p^v).\sum\limits_{i=1}^nk_i x_i))=\sum\limits_{i=1}^nk_i (t.v_i).
\end{equation}
\\
{\bf Case 1} Assume that $t.v=0$. It follows from (7) that $t.v_i=0$ for any $1\leq i\leq n$ and hence (5) holds (in view of (8)). \\
\\
{\bf Case 2} Assume now that $t.v\neq 0$. Then there is a $q\in \Walk_v(\Omega)$ such that $\zeta(q)=t$. It follows from (7) that for any $1\leq i\leq n$ there is a $q_i\in \Walk_{v_i}(\Omega)$ such that $\zeta(q_i)=t$. Clearly 
\begin{equation}
\epsilon(t.v)=\epsilon(r(q))=\zeta(p^{r(q)}).\epsilon_W(x)=\zeta(p^{r(q)}).\sum\limits_{i=1}^nk_i x_i=\sum\limits_{i=1}^nk_i r(p^{r(q)}_i)
\end{equation}
and 
\begin{equation}
t.\epsilon(v)\overset{(8)}{=}\sum\limits_{i=1}^nk_i (t.v_i)=\sum\limits_{i=1}^nk_i r(q_i).
\end{equation}
We will show that $r(q_i)=r(p^{r(q)}_i)$ for any $1\leq i\leq n$ which implies (5) in view of (9) and (10). Let $1\leq i\leq n$. Then $\zeta(q_ip_i^v)=\zeta(qp^v)$ and $\zeta(p_i^{r(q)})=\zeta(p^{r(q)})$. By Lemma \ref{lemwell} the map $\xi|_{\Walk_y(\Gamma)}:{\Walk_y(\Gamma)}\to {\Walk(\Lambda)}$ is injective. It follows that $\eta(q_ip_i^v)=\eta(qp^v)$ and $\eta(p_i^{r(q)})=\eta(p^{r(q)})$. Hence, by Lemma \ref{lemcous},
\[\Big(r(qp^v)=r(p^{r(q)})\Big)~\Rightarrow~\Big( [\eta(qp^v)]=[\eta(p^{r(q)})] \Big)~\Rightarrow ~\Big([\eta(q_ip_i^v)]=[\eta(p_i^{r(q)})]\Big)~ \Rightarrow ~\Big(r(q_ip_i^v)=r(p_i^{r(q)})\Big)\]
as desired.\\
\\
We have shown that (5) holds and hence $\epsilon\in \End_{\KP(\Lambda)}(V_{(\Omega,\zeta)})$. Since \[\epsilon(v)=\epsilon(\zeta(p_v).x)=\zeta(p_v).\epsilon(x)=\zeta(p_v).\epsilon^2(x)=\epsilon^2(\zeta(p_v).x)=\epsilon^2(v)\]
for any $v\in\Omega^{\ob}$, $\epsilon$ is an idempotent. Clearly $\epsilon$ is nontrivial since $\epsilon|_W=\epsilon_W$ is nontrivial
\end{proof}

Next we prove (3).
\begin{proposition}\label{propfree}
$W$ is free of rank $1$ as an $\bar A$-module.
\end{proposition}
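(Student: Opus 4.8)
The plan is to produce an explicit isomorphism of left $\bar A$-modules between $\bar A$ and $W$, which immediately shows $W$ is free of rank $1$ with a single generator coming from a chosen vertex of $\eta^{-1}(y)$. Since $(\Omega,\eta)$ is a covering of $\Gamma$, the fibre $\eta^{-1}(y)$ is nonempty, so I first fix a vertex $x_0\in\eta^{-1}(y)$ and define a map $\theta:\bar A\to W$ by $\theta(\bar a)=a.x_0$. The candidate generator is $x_0$ itself, and the whole proof reduces to checking that $\theta$ is a well-defined $\bar A$-linear bijection.

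First I would verify well-definedness and injectivity, which are two sides of the same coin: if $\bar a=\bar{a}'$, then $a-a'\in\Ann(W)$, and by Corollary \ref{corann} we have $\Ann(W)=\Ann(x_0)$, so $(a-a').x_0=0$, i.e. $a.x_0=a'.x_0$; running this equivalence backwards gives $\theta(\bar a)=0\Rightarrow a\in\Ann(x_0)=\Ann(W)\Rightarrow\bar a=0$, so $\theta$ is injective. The $\bar A$-linearity is routine: $\theta$ is $K$-linear because $A$ acts $K$-linearly, and $\theta(\bar b\,\bar a)=(ba).x_0=b.(a.x_0)=\bar b.\theta(\bar a)$ because the $\bar A$-action on $W$ is induced by the $A$-action. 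Thus the only substantive point is that Corollary \ref{corann} makes $x_0$ a \emph{faithful} cyclic vector, so that $\theta$ descends from $A$ to $\bar A$ and is injective there.

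Next I would prove surjectivity, which is where connectedness of $\Omega$ enters. It suffices to hit every basis vector $x'\in\eta^{-1}(y)$ of $W$, since $\theta$ is $K$-linear and its image is therefore a $K$-subspace; a $K$-subspace containing a basis of $W$ equals $W$. Given $x'\in\eta^{-1}(y)$, connectedness of $\Omega$ yields a walk $q\in{}_{x'}\!\Walk_{x_0}(\Omega)$. Because $\eta(x_0)=\eta(x')=y$, we get $\eta(q)\in{}_y\!\Walk_y(\Gamma)$, and since $\zeta=\xi\circ\eta$ this gives $\zeta(q)=\xi(\eta(q))\in\xi({}_y\!\Walk_y(\Gamma))$, so $\zeta(q)$ is one of the monomials spanning $A$ and in particular lies in $A$. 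By Lemma \ref{lemaction}, $\zeta(q).x_0=r(q)=x'$, hence $\theta(\overline{\zeta(q)})=x'$. This shows every basis vertex of $W$ is in the image, so $\theta$ is onto.

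Combining these, $\theta:\bar A\to W$ is an isomorphism of left $\bar A$-modules, proving $W$ is free of rank $1$. I expect the main (and only real) obstacle to be packaged entirely into Corollary \ref{corann}: the subtlety is that $\theta$ must be well-defined and injective over the \emph{quotient} $\bar A=A/\Ann(W)$, which is exactly the content of the annihilator of $W$ coinciding with the annihilator of the single vertex $x_0$. Everything else (linearity, and surjectivity via a connecting walk) is formal once $\zeta=\xi\circ\eta$ and the singleton property of the covering fibres are used.
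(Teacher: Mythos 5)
Your proof is correct and is essentially the same as the paper's: the paper also fixes $x\in\eta^{-1}(y)$, shows it generates $W$ via a connecting walk $p\in{}_{x'}\!\Walk_x(\Omega)$ with $\overline{\zeta(p)}.x=x'$, and deduces faithfulness from Corollary \ref{corann}. Your packaging of this as an explicit isomorphism $\bar A\to W$, $\bar a\mapsto a.x_0$, is just a rephrasing of the paper's "generator plus trivial annihilator" argument, with the same two ingredients (connectedness of $\Omega$ for surjectivity, Corollary \ref{corann} for injectivity).
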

\begin{proof}
Choose an $x\in\eta^{-1}(y)$. If $x'\in\eta^{-1}(y)$, then there is a $p\in{}_{x'}\Walk_x(\Omega)$ since $\Omega$ is connected. Clearly $\zeta(p)\in \xi({}_y\!\Walk_y(\Gamma))$ and moreover $\overline{\zeta(p)}.x=x'$. Hence $x$ generates the $\bar A$-module $W$. On the other hand, if $\bar a.x=0$ for some $a\in A$, then $a\in \Ann(x)$ (since  $\bar a.x=a.x$) and hence $\bar a=0$ by Corollary \ref{corann}. Thus $\{x\}$ is a basis for the $\bar A$-module $W$.
\end{proof}

Recall that the semigroup homomorphism $f:\xi({}_y\!\Walk_y(\Gamma))\to \pi(\Gamma,y)$ was defined by $f(p)=[\hat p]$. Below we prove (4).

\begin{proposition}\label{propKG}
The algebra $\bar A$ is isomorphic to the group algebra $KG$ where $G=\pi(\Gamma,y)$.
\end{proposition}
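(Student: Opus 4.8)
The plan is to construct an explicit algebra homomorphism $KG \to \bar A$ and show it is an isomorphism, exploiting the semigroup homomorphism $f:\xi({}_y\!\Walk_y(\Gamma))\to \pi(\Gamma,y)$ that has already been set up. Recall that $A$ is the $K$-span of the monomials $\xi({}_y\!\Walk_y(\Gamma))$, so $\bar A = A/\Ann(W)$ is spanned by the images of these monomials. The group $G = \pi(\Gamma,y)$ consists of the classes $[\hat p]$, and $f$ sends each monomial $p$ to its class $f(p)=[\hat p]$. The natural candidate map is therefore the $K$-linear extension of $g \mapsto \bar p$, where $p$ is any element of $\xi({}_y\!\Walk_y(\Gamma))$ with $f(p)=g$; equivalently, I first build a map $A \to KG$ induced by $f$ and show it descends to the desired isomorphism, or build its inverse $KG \to \bar A$ directly. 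I expect the cleaner route is to define $\Phi: A \to KG$ on the spanning set by $\Phi(p) = f(p)$ (extended $K$-linearly) and verify this is a surjective algebra homomorphism with kernel exactly $\Ann(W)$.

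\emph{First} I would check that $\Phi$ is an algebra homomorphism. Multiplicativity reduces to the claim that $f(pq) = f(p)f(q)$ whenever $p,q \in \xi({}_y\!\Walk_y(\Gamma))$ and the product $pq$ is again a nonzero monomial in $A$; but $f$ is already asserted to be a semigroup homomorphism, so this is immediate once I confirm that the multiplication in $A$ restricted to these monomials matches concatenation of walks (with the understanding that $pq$ may be zero in $\KP(\Lambda)$, in which case it contributes nothing and no group element is needed). Surjectivity of $\Phi$ is clear since every $g = [\hat p] \in G$ is hit by the corresponding monomial $p = \xi(\hat p)$. \emph{Next}, I would identify $\ker \Phi$ with $\Ann(W)$. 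Using Corollary \ref{corann}, $\Ann(W) = \Ann(x)$ for a fixed $x \in \eta^{-1}(y)$, and Lemma \ref{lemann} describes $\Ann(x)$ as those $a = \sum_p k_p p$ with $\sum_{r(\tilde p_x)=x'} k_p = 0$ for every $x' \in \eta^{-1}(y)$. By Lemma \ref{lemzedp}, the condition $r(\tilde p_x) = r(\tilde q_x)$ is equivalent to $f(p) = f(q)$; hence grouping the monomials $p$ by the value $f(p) \in G$ coincides exactly with grouping them by $r(\tilde p_x)$. Therefore $\sum_p k_p p \in \Ann(x)$ if and only if, for each group element $g$, the coefficients of all monomials mapping to $g$ sum to zero — which is precisely the statement that $\Phi(\sum_p k_p p) = \sum_g (\sum_{f(p)=g} k_p)\, g = 0$ in $KG$.

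\emph{Finally}, I would invoke the first isomorphism theorem: $\bar A = A/\Ann(W) = A/\ker\Phi \cong \im\Phi = KG$, and note that the induced map sends $\bar p \mapsto f(p)$, so in particular it respects the identity (the class of the trivial walk at $y$ maps to the identity of $G$) and is a $K$-algebra isomorphism. The bijection $r(\tilde p_x) \leftrightarrow \eta^{-1}(y)$ furnished by Lemma \ref{lemzedp} guarantees that distinct group elements correspond to genuinely distinct vertex-targets, which is what makes the kernel computation tight. \textbf{The main obstacle} I anticipate is the bookkeeping in matching the two descriptions of $\ker\Phi$ and $\Ann(W)$: one must be careful that the partition of $\xi({}_y\!\Walk_y(\Gamma))$ by the fibres $r(\tilde p_x)$ (a condition living in $\Omega$) is the \emph{same} partition as the one by the values of $f$ (a condition living in $G$), and this equality is exactly the content of Lemma \ref{lemzedp}. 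Once that correspondence is pinned down, multiplicativity of $\Phi$ and the isomorphism-theorem conclusion are routine, so the real work is organising the coefficient sums so that the two annihilator/kernel conditions are visibly identical.
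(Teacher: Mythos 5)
Your overall route is, in substance, the paper's own: the paper defines $F:\bar A\to KG$ by $\overline{\sum_p k_p p}\mapsto\sum_p k_p f(p)$ and proves well-definedness and injectivity by exactly the fibre-matching you describe (Corollary \ref{corann} and Lemma \ref{lemann} to reduce to coefficient sums over the $r(\tilde p_x)$-fibres, then Lemma \ref{lemzedp} to identify those fibres with the $f$-fibres); packaging this as ``$\ker\Phi=\Ann(W)$ plus the first isomorphism theorem'' is the same argument. The genuine gap is at your very first step: you define $\Phi:A\to KG$ on the spanning set $\xi({}_y\!\Walk_y(\Gamma))$ and ``extend $K$-linearly'', but this set is \emph{not} a basis of $A$, so such an extension is not automatically well-defined. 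Distinct walks can coincide as elements of $\KP(\Lambda)$: whenever $\Lambda\xi(y)$ contains a path $\lambda$ of nonzero degree, take $\delta\in\Gamma y$ with $\xi(\delta)=\lambda$ (it exists by Definition \ref{defwp}(i)); then $\delta^*\delta$ and $1_y$ are distinct walks in ${}_y\!\Walk_y(\Gamma)$, yet their images $\lambda^*\lambda$ and $1_{\xi(y)}$ are equal in $\KP(\Lambda)$ by (KP3). Until you show that $\sum_p k_p f(p)$ depends only on the element $\sum_p k_p p$ of $A$ and not on the chosen coefficients, ``$\Phi$'' is not a map, and statements such as ``$\ker\Phi=\Ann(W)$'' do not yet parse. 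This well-definedness issue is precisely the crux, and it is what the bulk of the paper's proof is devoted to.

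The gap is repairable with ingredients you already have: apply your kernel computation to the zero element. If $\sum_p k_p p=\sum_p l_p p$ in $A$, then $\sum_p(k_p-l_p)p=0\in\Ann(x)$, so Lemma \ref{lemann} gives vanishing of the coefficient sums of $k_p-l_p$ over the $r(\tilde p_x)$-fibres, and Lemma \ref{lemzedp} converts this into vanishing over the $f$-fibres, i.e.\ $\sum_p k_p f(p)=\sum_p l_p f(p)$; with this inserted, your argument goes through and coincides with the paper's. One further correction: your parenthetical that ``$pq$ may be zero in $\KP(\Lambda)$, in which case it contributes nothing'' points the wrong way. If some product of these monomials were $0$, multiplicativity would \emph{fail}, since $\Phi(p)\Phi(q)=f(p)f(q)$ is a group element of $KG$ and hence nonzero. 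What saves the argument is that this never happens: $\xi({}_y\!\Walk_y(\Gamma))$ is closed under concatenation (because ${}_y\!\Walk_y(\Gamma)$ is and $\xi$ is functorial), and each of its elements is nonzero in $\KP(\Lambda)$ because it acts nontrivially on $W$ --- indeed $p.x=r(\tilde p_x)\neq 0$ by Lemma \ref{lemaction}.
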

\begin{proof}
Define the map
\begin{align*}
F:\bar A&\to KG,\\
\overline{\sum_{p\in\xi({}_y\!\Walk_y(\Gamma))}k_pp}&\mapsto \sum_{p\in\xi({}_y\!\Walk_y(\Gamma))}k_pf(p).
\end{align*}
First we show that $F$ is well-defined. Choose an $x\in\eta^{-1}(y)$. Suppose that $\overline{\sum_{p\in\xi({}_y\!\Walk_y(\Gamma))}k_pp}=\overline{\sum_{p\in\xi({}_y\!\Walk_y(\Gamma))}l_pp}$. Then $\sum_{p\in\xi({}_y\!\Walk_y(\Gamma))}(k_p-l_p)p\in \Ann(x)$ by Corollary \ref{corann}. It follows from Lemma \ref{lemann} that 
\begin{equation}
\sum_{\substack{p\in \xi({}_y\!\Walk_y(\Gamma)),\\r(\tilde p_x)=x'}}(k_p-l_p)=0 \quad \text{for any } x'\in \eta^{-1}(y).
\end{equation}
We have to show that $\sum_{p\in\xi({}_y\!\Walk_y(\Gamma))}k_pf(p)=\sum_{p\in\xi({}_y\!\Walk_y(\Gamma))}l_pf(p)$, i.e.
\begin{equation}
\sum_{\substack{p\in \xi({}_y\!\Walk_y(\Gamma)),\\f(p)=g}}(k_p-l_p)=0 \quad \text{for any } g\in G.\quad\quad\hspace{0.05cm}{}
\end{equation}
But it follows from (11) and Lemma \ref{lemzedp} that (12) holds. Hence $F$ is well-defined. We leave it to the reader to check that $F$ is an algebra homomorphism. It remains to show that $F$ is bijective. Suppose that $F( \overline{\sum_{p\in\xi({}_y\!\Walk_y(\Gamma))}k_pp})=F(\overline{\sum_{p\in\xi({}_y\!\Walk_y(\Gamma))}l_pp})$. Then (12) holds. It follows from (12) and Lemma \ref{lemzedp} that (11) holds. Hence $\overline{\sum_{p\in\xi({}_y\!\Walk_y(\Gamma))}k_pp}=\overline{\sum_{p\in\xi({}_y\!\Walk_y(\Gamma))}l_pp}$ and therefore $F$ is injective. The surjectivity of $F$ follows from the surjectivity of $f$.
\end{proof}

We are now in position to prove the main result of this section, namely (1).
\begin{theorem}\label{thmdecomp}
Let $\Lambda$ be a $k$-graph, $C$ a connected component of $\RG(\Lambda)$ and $y\in \Gamma_C^{\ob}$. Then the $\KP(\Lambda)$-module $V_{(\Omega_C,\zeta_C)}$ is indecomposable if and only if the group algebra $KG$ has no nontrivial idempotents where $G=\pi(\Gamma_C,y)$.
\end{theorem}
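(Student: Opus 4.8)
The plan is to assemble the final equivalence directly from the three technical results just proved, which were explicitly designed as the links in the chain (2)--(4) stated at the start of the section. The theorem is nothing more than a transitive chain of equivalences, so the proof is short once Propositions \ref{propextend}, \ref{propfree} and \ref{propKG} are in hand.

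First I would recall that, for any ring $R$ and $R$-module $M$, the module $M$ is indecomposable if and only if $\End_R(M)$ has no nontrivial idempotents; this was noted at the opening of the section. Applying this to $M=V_{(\Omega_C,\zeta_C)}=V_{(\Omega,\zeta)}$ over $R=\KP(\Lambda)$, it suffices to show that $\End_{\KP(\Lambda)}(V_{(\Omega,\zeta)})$ has no nontrivial idempotents if and only if $KG$ has none.

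Next I would invoke Proposition \ref{propextend}, which establishes (2): a nontrivial idempotent in $\End_{\KP(\Lambda)}(V_{(\Omega,\zeta)})$ restricts to one in $\End_{\bar A}(W)$, and conversely every nontrivial idempotent in $\End_{\bar A}(W)$ extends to one in $\End_{\KP(\Lambda)}(V_{(\Omega,\zeta)})$. Hence $\End_{\KP(\Lambda)}(V_{(\Omega,\zeta)})$ has no nontrivial idempotents precisely when $\End_{\bar A}(W)$ has none. Then I would use Proposition \ref{propfree}, which gives (3): $W$ is free of rank $1$ as a $\bar A$-module, so $W\cong \bar A$ as $\bar A$-modules and therefore $\End_{\bar A}(W)\cong \End_{\bar A}(\bar A)\cong \bar A^{\op}$ as rings. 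Consequently $\End_{\bar A}(W)$ has no nontrivial idempotents if and only if $\bar A$ has none (idempotents of a ring and of its opposite coincide). Finally, Proposition \ref{propKG} gives (4): $\bar A\cong KG$, so $\bar A$ has no nontrivial idempotents if and only if $KG$ has none. Chaining these equivalences yields the theorem.

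There is essentially no obstacle at this stage: all the substantive work (the extension/restriction of idempotents in Proposition \ref{propextend}, the freeness in Proposition \ref{propfree}, and the identification with the group algebra in Proposition \ref{propKG}) has already been carried out. The only point requiring a little care is the passage from $\End_{\bar A}(W)$ to $\bar A$ itself: one must note that a rank-one free module has endomorphism ring isomorphic to the opposite ring, and that the property of having no nontrivial idempotents is self-dual, so the distinction between $\bar A$ and $\bar A^{\op}$ is immaterial here. With that observation the proof is a direct concatenation of the four displayed equivalences (1)--(4).
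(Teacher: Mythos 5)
Your proof is correct and follows essentially the same route as the paper: the paper's own proof of Theorem \ref{thmdecomp} is exactly the chain of equivalences assembled from Propositions \ref{propextend}, \ref{propfree} and \ref{propKG}, together with the standard fact that indecomposability is equivalent to the absence of nontrivial idempotents in the endomorphism ring. Your added remark that $\End_{\bar A}(W)\cong \bar A^{\op}$ for a rank-one free module, and that having no nontrivial idempotents is invariant under passing to the opposite ring, is a worthwhile explicit justification of a step the paper leaves implicit.
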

\begin{proof}
It follows from Propositions \ref{propextend}, \ref{propfree} and \ref{propKG} that
\begin{align*}
&V_{(\Omega,\zeta)}\text{ is indecomposable }\\
\Leftrightarrow~&\End_{\KP(\Lambda)}(V_{(\Omega,\zeta)})\text{ has no nontrivial idempotents}\\
\Leftrightarrow~&\End_{\bar A}(W)\text{ has no nontrivial idempotents}\\
\Leftrightarrow~&KG\text{ has no nontrivial idempotents}.
\end{align*}
\end{proof}

\begin{corollary}\label{cordecomp}
Let $\Lambda$ be a $1$-graph and $C$ a connected component of $\RG(\Lambda)$. Then the $\KP(\Lambda)$-module $V_{(\Omega_C,\zeta_C)}$ is indecomposable
\end{corollary}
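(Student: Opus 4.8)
The plan is to deduce Corollary \ref{cordecomp} directly from Theorem \ref{thmdecomp}. By that theorem, the module $V_{(\Omega_C,\zeta_C)}$ is indecomposable if and only if the group algebra $KG$ has no nontrivial idempotents, where $G=\pi(\Gamma_C,y)$ is the fundamental group of the attracting object $(\Gamma_C,\xi_C)$ at a chosen vertex $y$. So the entire task reduces to showing that for a $1$-graph $\Lambda$, the fundamental group $G=\pi(\Gamma_C,y)$ has the property that $KG$ has no nontrivial idempotents.

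The key structural fact I would invoke is that the fundamental group of a $1$-graph is free. Indeed, a $1$-graph is essentially a directed graph, and its fundamental groupoid (as constructed in Section 2 via the relations $R$ and the walk equivalence $\sim_R$) is the usual fundamental groupoid of a graph; the vertex group $\pi(\Gamma_C,y)=y\G(\Gamma_C)y$ is therefore a free group, since the fundamental group of any (connected) graph is free. Thus $G$ is free, hence in particular torsion-free.

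First I would record that $G$ is a free group. Then I would appeal to the standard fact that the group algebra $KG$ of a free (more generally, torsion-free and, say, orderable) group over a field $K$ has no nontrivial idempotents. Free groups are bi-orderable, so $KG$ is a domain by Malcev--Neumann / Kaplansky-type results, and a domain has no idempotents other than $0$ and $1$; alternatively one may cite that the Kaplansky idempotent conjecture is known for orderable groups, which covers free groups. Either way, $KG$ has no nontrivial idempotents. I would state this as the crux and cite the relevant classical result rather than reproving it.

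Putting these together: since $\Lambda$ is a $1$-graph, $G=\pi(\Gamma_C,y)$ is free, so $KG$ has no nontrivial idempotents, and therefore by Theorem \ref{thmdecomp} the module $V_{(\Omega_C,\zeta_C)}$ is indecomposable. The main obstacle I anticipate is justifying cleanly that $\pi(\Gamma_C,y)$ is free for a $1$-graph: one must check that the fundamental groupoid $\G(\Gamma_C)$ as defined here (via $\Walk$ and $\sim_R$) genuinely coincides with the topological fundamental groupoid of the underlying graph, so that freeness of the vertex group is applicable. This is routine but requires one to match the combinatorial relations $R$ with the edge-reversal relations defining the fundamental group of a graph; for a $1$-graph the factorisation property is vacuous enough that this identification is transparent.
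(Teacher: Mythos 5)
Your proposal is correct and follows essentially the same route as the paper: reduce to Theorem \ref{thmdecomp}, observe that fundamental groups of $1$-graphs are free, and conclude that $KG$ is a domain (hence has no nontrivial idempotents). The paper cites Higman's theorem \cite[Theorem 12]{higman} for the fact that group rings of free groups have no zero divisors, whereas you invoke Malcev--Neumann/Kaplansky-type results for orderable groups --- the same classical fact by a different reference.
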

\begin{proof}
Choose a $y\in \Gamma_C^{\ob}$. It is easy to see that fundamental groups of $1$-graphs are free. Hence the group ring $KG$, where $G=\pi(\Gamma_C,y)$, has no zero divisors by \cite[Theorem 12]{higman}. It follows that $KG$ has no nontrivial idempotents and hence $V_{(\Omega_C,\zeta_C)}$ is indecomposable, by Theorem \ref{thmdecomp}.
\end{proof}

\section{Examples}
Let $\Lambda$ be a $k$-graph. Choose $k$ colours $c_1,\dots, c_k$. The {\it skeleton} $S(\Lambda)$ of $\Lambda$ is a $(c_1,\dots,c_k)$-coloured directed graph which is defined as follows. The vertices of $S(\Lambda)$ are the vertices of $\Lambda$. The edges of $S(\Lambda)$ are the paths of degree $z_i~(1\leq i\leq n)$ in $\Lambda$ where $z_i$ is the element of $\N^k$ whose $i$-th coordinate is $1$ and whose other coordinates are $0$. The source and the range map in $S(\Lambda)$ are the restrictions of the source and the range map in $\Lambda$, respectively. An edge $e$ in $S(\Lambda)$ has the colour $c_i$ if $e$ is a path of degree $z_i$ in $\Lambda$.
 
By the factorisation property of $\Lambda$, there is a bijection between the $c_ic_j$-coloured paths of length $2$ and the $c_jc_i$-coloured paths. We may think of these pairs as commutative squares in $S(\Lambda)$. Let $\mathcal{C}(\Lambda)$ denote the collection of all commutative squares in $S(\Lambda)$. By a theorem of Fowler and Sims \cite{fowler-sims}, the $k$-graph $\Lambda$ is determined by $S(\Lambda)$ and $\mathcal{C}(\Lambda)$. 

Any directed graph $S$ determines a $1$-graph $\Lambda$ such that $S(\Lambda)=S$. A $(c_1,c_2)$-coloured directed graph $S$ and a collection $\mathcal{C}$ of commutative squares in $S$ which includes each $c_ic_j$-coloured path exactly once, determine a $2$-graph $\Lambda$ such that $S(\Lambda)=S$ and $\C(\Lambda)=\C$. If $k\geq 3$, then the collection $\C$ has to satisfy an extra associativity condition. For more details see \cite[Section 2.1]{pchr}.

Following \cite{PQR}, we call a $k$-graph $\Delta$ a {\it $k$-tree}, if $\pi(\Delta,v)= \{v\}$ for some (and hence any) vertex $v\in \Delta^{\ob}$.

\begin{lemma}\label{lemktree}
Let $\Lambda$ be a $k$-graph and $(\Delta,\alpha)$ an object of the connected component $C$ of $\RG(\Lambda)$. If $\Delta$ is a $k$-tree, then $(\Delta,\alpha)\cong (\Omega_C,\zeta_C)$.
\end{lemma}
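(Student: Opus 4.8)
The plan is to exploit that $(\Omega_C,\zeta_C)$ is a repelling object of $C$ (Theorem \ref{thmm1}): since $(\Delta,\alpha)$ lies in $C$, there is a morphism $\phi:(\Omega_C,\zeta_C)\to(\Delta,\alpha)$ in $\RG(\Lambda)$. By Proposition \ref{propsurj}, $(\Omega_C,\phi)$ is then a covering of the $k$-graph $\Delta$. The entire argument reduces to showing that this particular covering is trivial, i.e. that $\phi$ is already an isomorphism of $k$-graphs. Once that is established, $\phi$ is a $k$-graph isomorphism commuting with the structure maps to $\Lambda$ (that is, $\alpha\circ\phi=\zeta_C$), so its inverse again commutes with these maps, and hence $\phi$ is an isomorphism $(\Omega_C,\zeta_C)\to(\Delta,\alpha)$ in $\RG(\Lambda)$.

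To prove that $\phi$ is an isomorphism I would invoke the hypothesis that $\Delta$ is a $k$-tree, which by definition means $\pi(\Delta,v)=\{v\}$ for some, and hence every, $v\in\Delta^{\ob}$. By the $1$-$1$ correspondence between isomorphism classes of coverings of $\Delta$ and conjugacy classes of subgroups of $\pi(\Delta,v)$ recorded in Section 2 (from \cite[Theorems 2.2, 2.7, 2.8]{PQR}), the trivial group $\pi(\Delta,v)$ has only one subgroup, so $\Delta$ admits a unique covering up to isomorphism. As $(\Delta,\id_\Delta)$ is visibly a covering of $\Delta$, every covering of $\Delta$ is isomorphic to $(\Delta,\id_\Delta)$; in particular $(\Omega_C,\phi)\cong(\Delta,\id_\Delta)$. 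An isomorphism of coverings is a $k$-graph isomorphism $\gamma:\Omega_C\to\Delta$ with $\id_\Delta\circ\gamma=\phi$, which forces $\phi=\gamma$ to be a $k$-graph isomorphism, as required.

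I expect the routine bookkeeping around covering isomorphisms to be the only fiddly point; the real content is carried entirely by the uniqueness of coverings of a $k$-tree, which is where the $k$-tree hypothesis enters. An equivalent route, which bypasses the explicit subgroup correspondence, is to observe that $\Omega_C$ is itself a $k$-tree, being the total space of a universal covering: indeed, for a covering the induced map on fundamental groups is injective, so triviality of $\pi(\Delta,v)$ together with the covering $(\Omega_C,\phi)$ forces $\pi(\Omega_C,x)=\{x\}$. Thus $(\Omega_C,\phi)$ is a universal covering of $\Delta$, and since $(\Delta,\id_\Delta)$ is also a universal covering of the $k$-tree $\Delta$, the uniqueness of universal coverings \cite[Theorems 2.2, 2.7]{PQR} again makes $\phi$ an isomorphism and yields $(\Omega_C,\zeta_C)\cong(\Delta,\alpha)$.
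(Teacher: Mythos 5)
Your proposal is correct and follows essentially the same route as the paper: obtain a morphism $\phi:(\Omega_C,\zeta_C)\to(\Delta,\alpha)$ from Theorem \ref{thmm1}, note via Proposition \ref{propsurj} that $(\Omega_C,\phi)$ is a covering of $\Delta$, and then use the correspondence between coverings and conjugacy classes of subgroups of the (trivial) fundamental group of the $k$-tree $\Delta$ to conclude that $\phi$ is a $k$-graph isomorphism, hence an isomorphism in $\RG(\Lambda)$. Your alternative ending via uniqueness of universal coverings is a harmless variant of the same idea.
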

\begin{proof}
By Theorem \ref{thmm1} there is a morphism $\phi:(\Omega_C,\zeta_C)\to(\Delta,\alpha)$ in $\RG(\Lambda)$. By Proposition \ref{propsurj}, $(\Omega_C,\phi)$ is a covering of $\Delta$. But since $\Delta$ is a $k$-tree, any covering of $\Delta$ is isomorphic to $(\Delta,\id_\Delta)$ (recall that for any $v\in \Delta^{\ob}$ there is a 1-1 correspondence between the isomorphism classes of coverings of $\Delta$ and the conjugacy classes of subgroups of $\pi(\Delta,v)$). It follows that $\phi:\Omega_C\to \Delta$ is an isomorphism of $k$-graphs and hence $\phi:(\Omega_C,\zeta_C)\to(\Delta,\alpha)$ is an isomorphism in $\RG(\Lambda)$.
\end{proof}

\begin{example}\label{ex1}
Suppose $\Lambda$ is the $1$-graph with skeleton
\begin{equation*}
S(\Lambda):\xymatrix{
 \bullet \ar@[blue]@(u,r)^{e} \ar@[blue]@(ur,rd)^{f} \ar@[blue]@(r,d)^{g} &.
}
\end{equation*}
Let $(\Delta,\alpha)$ be the representation $1$-graph for $\Lambda$ whose skeleton is
\[
\xymatrix@=10pt{
                   &&         &   &                   &    &        &   &                                   &   &     && &&\\
                & &                      &         \bullet \ar@{.>}[l] \ar@{.>}[u] \ar@{.>}[ul]      &  &  \bullet \ar@{.>}[l] \ar@{.>}[u] \ar@{.>}[ul]&   &     \bullet                 \ar@{.>}[l] \ar@{.>}[u] \ar@{.>}[ul] &  &      \bullet \ar@{.>}[l] \ar@{.>}[u] \ar@{.>}[ul]  && \bullet \ar@{.>}[l] \ar@{.>}[u] \ar@{.>}[ul] &&\\
                   &&                      &&                      &&                       &&                               && &&&&\\
S(\Delta):\, \, \, && \bullet \ar@{.>}[ll]\ar@{.>}[ddl]\ar@{.>}[uul]    &&  \bullet \ar@[blue][ll]_{{e}}\ar@[blue]^{{g}}[ddl] \ar@[blue]_{{f}}[uul] &&   \bullet \ar@[blue][ll]_{{f}} \ar@[blue]^{{g}}[ddl] \ar@[blue]_{{e}}[uul]  && \bullet \ar@[blue][ll]_{{e}} \ar@[blue]^{{g}}[ddl]    \ar@[blue]_{{f}}[uul] && \bullet \ar@[blue][ll]_{{f}}\ar@[blue]^{{g}}[ddl] \ar@[blue]_{{e}}[uul] &&  \bullet\ar@[blue][ll]_{{f}}\ar@[blue]^{{g}}[ddl] \ar@[blue]_{{e}}[uul]  && \ar@{.>}[ll]_{{e}}&.&\\
              &&                      &&                      &&                       &&                               && &&\\
                   & &                &          \bullet \ar@{.>}[l] \ar@{.>}[d] \ar@{.>}[dl]       &   &   \bullet\ar@{.>}[l] \ar@{.>}[d] \ar@{.>}[dl]  &   &    \bullet \ar@{.>}[l] \ar@{.>}[d] \ar@{.>}[dl]                       &    &      \bullet \ar@{.>}[l] \ar@{.>}[d] \ar@{.>}[dl]  &&  \bullet \ar@{.>}[l] \ar@{.>}[d] \ar@{.>}[dl] &&\\
                    &&                      &  &                      &  &             &  &                  & &  &&&&
}
\]
Here the label of an edge indicates its image in $S(\Lambda)$ under $\alpha$. Let $C$ be the connected component of $\RG(\Lambda)$ that contains $(\Delta,\alpha)$.  Since $\Delta$ is a $1$-tree, we have $(\Delta,\alpha)\cong (\Omega_C,\zeta_C)$ by Lemma \ref{lemktree}. For any vertex $v\in\Delta^{\ob}$ and $n\geq 1$ there is precisely one path $p_{v,n}$ of degree($=$length) $n$ ending in $v$. It follows from the irrationality of the infinite path $q=efeffefff...$ that for any distinct vertices $u,v\in \Delta^{\ob}$ there is an $n\geq 1$ such that $\alpha(p_{u,n})\neq \alpha(p_{v,n})$ (cf. \cite{C}, \cite[Section 4]{HPS}). This implies that $\alpha(\Walk_u(\Delta))\neq \alpha(\Walk_v(\Delta))$ for any $u\neq v\in \Delta^{\ob}$, i.e. $(\Delta,\alpha)$ is irreducible. It follows from Theorem \ref{thmm1} that up to isomorphism $(\Delta,\alpha)$ is the only object of $C$. By Theorem \ref{thmirr} the $\KP(\Lambda)$-module $V_{(\Delta,\alpha)}$ is simple. It is isomorphic to the Chen module $V_{[q]}$.
\end{example}

\begin{example}\label{ex2}
Suppose again that $\Lambda$ is the $1$-graph with skeleton
\begin{equation*}
S(\Lambda):\xymatrix{
 \bullet \ar@[blue]@(u,r)^{e} \ar@[blue]@(ur,rd)^{f} \ar@[blue]@(r,d)^{g} &.
}
\end{equation*}
Let $(\Delta_1,\alpha_1)$, $(\Delta_2,\alpha_2)$ and $(\Delta_3,\alpha_3)$ be the representation $1$-graph for $\Lambda$ whose skeletons are
\begin{align*}
&\xymatrix@=10pt{
                  && &&         &   &                   &    &        &   &                                   &&&   &     && &&\\
             && &&   &\bullet \ar@{.>}[l] \ar@{.>}[u] \ar@{.>}[ul] &                      &         \bullet \ar@{.>}[l] \ar@{.>}[u] \ar@{.>}[ul]      &  &  \bullet \ar@{.>}[l] \ar@{.>}[u] \ar@{.>}[ul]&   &     \bullet                 \ar@{.>}[l] \ar@{.>}[u] \ar@{.>}[ul] &  &      \bullet \ar@{.>}[l] \ar@{.>}[u] \ar@{.>}[ul]  && \bullet \ar@{.>}[l] \ar@{.>}[u] \ar@{.>}[ul] &&\\&&
&&                   &&                      &&                      &&                       &&                               && &&&&\\
S(\Delta_1):&&&&\bullet \ar@{.>}[ll]_g\ar@{.>}[ddl]\ar@{.>}[uul] &&  \bullet \ar@[blue][ll]_{{e}} \ar@[blue][ddl]    \ar@[blue][uul]   &&  \bullet \ar@[blue][ll]_{{f}}\ar@[blue][ddl] \ar@[blue][uul] &&   \bullet \ar@[blue][ll]_{{g}} \ar@[blue][ddl] \ar@[blue][uul]  && \bullet \ar@[blue][ll]_{{e}} \ar@[blue][ddl]    \ar@[blue][uul] && \bullet \ar@[blue][ll]_{{f}}\ar@[blue][ddl] \ar@[blue][uul] &&  \bullet\ar@[blue][ll]_{{g}}\ar@[blue][ddl] \ar@[blue][uul]  && \ar@{.>}[ll]_{{e}},&&\\
  &&  &&          &&                      &&                      &&                       &&                               && &&\\
   &&  &&              &\bullet \ar@{.>}[l] \ar@{.>}[d] \ar@{.>}[dl]  &                &          \bullet \ar@{.>}[l] \ar@{.>}[d] \ar@{.>}[dl]       &   &   \bullet\ar@{.>}[l] \ar@{.>}[d] \ar@{.>}[dl]  &   &    \bullet \ar@{.>}[l] \ar@{.>}[d] \ar@{.>}[dl]                       &    &      \bullet \ar@{.>}[l] \ar@{.>}[d] \ar@{.>}[dl]  &&  \bullet \ar@{.>}[l] \ar@{.>}[d] \ar@{.>}[dl] &&\\
    && &&               &&                      &  &                      &  &             &  &                  & &  &&&&
}
\\&
\\
&
\xymatrix@=10pt{
& && & &  & & & & &&&&&&&\\
& &  & & &\bullet \ar@{.>}[u] \ar@{.>}[ul] \ar@{.>}[ur]  \ar@[blue]@{<-}[ddr]& &\bullet \ar@{.>}[u] \ar@{.>}[ul] \ar@{.>}[ur] \ar@[blue]@{<-}[ddl]&&\bullet \ar@{.>}[u] \ar@{.>}[ul] \ar@{.>}[ur]  \ar@[blue]@{<-}[ddr]&&\bullet \ar@{.>}[u] \ar@{.>}[ul] \ar@{.>}[ur]  \ar@[blue]@{<-}[ddl]&&&  \\
& & & & &  &&&& & & & &&&&\\
& & & \bullet \ar@{.>}[l] \ar@{.>}[ul] \ar@{.>}[dl] \ar@[blue]@{<-}[drr] &&& \bullet\ar@[blue][rrrr] ^{g}&&&&\bullet\ar@[blue]@/^/[dr]^{e} &&&\bullet\ar@{.>}[r] \ar@{.>}[ur]  \ar@{.>}[dr] \ar@[blue]@{<-}[lld]& &&& &  \\
S(\Delta_2):& & & & & \bullet\ar@[blue]@/^/[ur]^{f} &&&&&&\bullet\ar@[blue]@/^/[dl]^{f}&&& &, & & & & & &\\
& & & \bullet \ar@{.>}[l] \ar@{.>}[ul] \ar@{.>}[dl] \ar@[blue]@{<-}[urr]&&&\bullet\ar@[blue]@/^/[ul]^{e}&&&&\bullet\ar@[blue][llll]^{g}&& &\bullet \ar@{.>}[r] \ar@{.>}[ur] \ar@{.>}[dr]   \ar@[blue]@{<-}[llu]& &&&  \\
& & & & &  & & & &&&&&&& &\\
& &  & & &\bullet \ar@{.>}[d] \ar@{.>}[dl] \ar@{.>}[dr]  \ar@[blue]@{<-}[uur] & &\bullet \ar@{.>}[d] \ar@{.>}[dl] \ar@{.>}[dr] \ar@[blue]@{<-}[uul]&&\bullet \ar@{.>}[d] \ar@{.>}[dl] \ar@{.>}[dr]  \ar@[blue]@{<-}[uur]&&\bullet \ar@{.>}[d] \ar@{.>}[dl] \ar@{.>}[dr]  \ar@[blue]@{<-}[uul]&&&  \\
& & & & &  & & & &&&&&&& &\\
}
\\&
\\
&\xymatrix@=10pt{
& && & &  & & & & &\\
& &  & & &\bullet \ar@{.>}[u] \ar@{.>}[ul] \ar@{.>}[ur]  \ar@[blue]@{<-}[ddr] & &\bullet \ar@{.>}[u] \ar@{.>}[ul] \ar@{.>}[ur] \ar@[blue]@{<-}[ddl]&  \\
& & & & &  & & & & &\\
& & & \bullet \ar@{.>}[l] \ar@{.>}[ul] \ar@{.>}[dl] \ar@[blue]@{<-}[drr] &&& \bullet \ar@[blue]@/^/[dr]^{e} &&&\bullet\ar@{.>}[r] \ar@{.>}[ur]  \ar@{.>}[dr] \ar@[blue]@{<-}[lld]& & &  \\
S(\Delta_3):& & & & & \bullet\ar@[blue]@/^/[ur]^{g} &&\bullet \ar@[blue]@/^1.7pc/[ll]^{f}  & & & &, & & & &\\
& & & \bullet \ar@{.>}[l] \ar@{.>}[ul] \ar@{.>}[dl] \ar@[blue]@{<-}[urr]&&&&&& \bullet \ar@{.>}[r] \ar@{.>}[ur] \ar@{.>}[dr]   \ar@[blue]@{<-}[llu]& &  \\
& & & & &  & & & & &\\
}
\end{align*}
respectively. Note that $(\Delta_2,\alpha_2)$ is a quotient of $(\Delta_1,\alpha_1)$, and $(\Delta_3,\alpha_3)$ is a quotient of $(\Delta_2,\alpha_2)$. Let $C$ be the connected component of $\RG(\Lambda)$ that contains $(\Delta_1,\alpha_1)$, $(\Delta_2,\alpha_2)$ and $(\Delta_3,\alpha_3)$. Then $(\Delta_1,\alpha_1)\cong (\Omega_C,\zeta_C)$ since $\Delta_1$ is a $1$-tree, and $(\Delta_3,\alpha_3)\cong (\Gamma_C,\xi_C)$ since $(\Delta_3,\alpha_3)$ is irreducible. By Corollary \ref{cordecomp} the module $V_{(\Delta_1,\alpha_1)}$ is indecomposable and by Theorem \ref{thmirr} the module $V_{(\Delta_3,\alpha_3)}$ is simple. $V_{(\Delta_3,\alpha_3)}$ is isomorphic to the Chen module $V_{[q]}$ where $q$ is the rational infinite path $efgefg...$. 
\end{example}

\begin{example}\label{ex3}
Suppose that $\Lambda$ is the $2$-graph with skeleton
\begin{align*}
\\
\xymatrix{S(\Lambda):&
\bullet \ar@(dl,dr)@{-->}@[red] \ar@(ul,ur)@[blue] &
}\\
\end{align*}
where the solid arrow is blue and the dashed one is red.
Let $(\Delta_1,\alpha_1)$, $(\Delta_2,\alpha_2)$ and $(\Delta_3,\alpha_3)$ be the representation $2$-graph for $\Lambda$ whose skeletons are
\begin{align*}
&\xymatrix@C=20pt@R=20pt{
&&&&&\\
&\ar@{..>}[r]&\bullet\ar@{-->}@[red][r]\ar@{..>}[u]&\bullet\ar@{-->}@[red][r]\ar@{..>}[u]&\bullet\ar@{..>}[r]\ar@{..>}[u]&\\
S(\Delta_1):&\ar@{..>}[r]&\bullet\ar@{-->}@[red][r]\ar@[blue][u]&\bullet\ar@{-->}@[red][r]\ar@[blue][u]&\bullet\ar@[blue][u]\ar@{..>}[r]&,\\
&\ar@{..>}[r]&\bullet\ar@{-->}@[red][r]\ar@[blue][u]&\bullet\ar@{-->}@[red][r]\ar@[blue][u]&\bullet\ar@[blue][u]\ar@{..>}[r]&\\
&&\ar@{..>}[u]&\ar@{..>}[u]&\ar@{..>}[u]&
}
\\&
\\
&
\xymatrix@C=20pt@R=20pt{
S(\Delta_2):&\ar@{..>}[r]&\bullet\ar@[blue]@(ul,ur)\ar@{-->}@[red][r]&\bullet\ar@[blue]@(ul,ur)\ar@{-->}@[red][r]&\bullet\ar@[blue]@(ul,ur)\ar@{..>}[r]&,}
\\&
\\
&\xymatrix{S(\Delta_3):&\hspace{0.9cm}&
 \bullet \ar@(dl,dr)@{-->}@[red] \ar@(ul,ur)@[blue] &\hspace{-1cm},
}\\
\\
\end{align*}
respectively. Note that $(\Delta_2,\alpha_2)$ is a quotient of $(\Delta_1,\alpha_1)$, and $(\Delta_3,\alpha_3)$ is a quotient of $(\Delta_2,\alpha_2)$. Let $C$ be the connected component of $\RG(\Lambda)$ that contains $(\Delta_1,\alpha_1)$, $(\Delta_2,\alpha_2)$ and $(\Delta_3,\alpha_3)$ (actually it follows from Proposition \ref{prop41.5} that $C$ is the only connected component of $\RG(\Lambda)$). Then $(\Delta_1,\alpha_1)\cong (\Omega_C,\zeta_C)$ since $\Delta_1$ is a $2$-tree, and $(\Delta_3,\alpha_3)\cong (\Gamma_C,\xi_C)$ since $(\Delta_3,\alpha_3)$ is irreducible. Clearly $G:=\pi(\Delta_3,\bullet)$ is the free abelian group on two generators. Hence the group ring $KG$ has no zero divisors by \cite[Theorem 12]{higman}. By Theorem \ref{thmdecomp} the module $V_{(\Delta_1,\alpha_1)}$ is indecomposable and by Theorem \ref{thmirr} the module $V_{(\Delta_3,\alpha_3)}$ is simple. 
\end{example}

\begin{example}\label{ex4}
Suppose $\Lambda$ is the $2$-graph with skeleton
\begin{equation*}
S(\Lambda):\xymatrix{
 \bullet \ar@[blue]@(ul,ur)^{e_1} \ar@[blue]@(dr,dl)^{e_2} \ar@{-->}@[red]@(dl,ul)^{f_1}\ar@{-->}@[red]@(ur,dr)^{f_2}&
}
\end{equation*}
and commutative squares $\C(\Lambda)=\{(e_if_j,f_ie_j)\mid i,j=1,2\}$.
Let $(\Delta,\alpha)$ be the representation $2$-graph for $\Lambda$ whose skeleton is
\[
\xymatrix@=12pt{
                   &&         &   &        &&&&&&           &    &        &   &                                   &   &     && &&\\
                & &                      &   & &     \bullet \ar@{.>}@/^/[ul]\ar@{.>}@/_/[ul]\ar@{.>}@/^/[ur]\ar@{.>}@/_/[ur]      & & &   \bullet \ar@{.>}@/^/[ul]\ar@{.>}@/_/[ul]\ar@{.>}@/^/[ur]\ar@{.>}@/_/[ur]   & &  &      \bullet \ar@{.>}@/^/[ul]\ar@{.>}@/_/[ul]\ar@{.>}@/^/[ur]\ar@{.>}@/_/[ur]   & & &       \bullet \ar@{.>}@/^/[ul]\ar@{.>}@/_/[ul]\ar@{.>}@/^/[ur]\ar@{.>}@/_/[ur]    &&&  \bullet \ar@{.>}@/^/[ul]\ar@{.>}@/_/[ul]\ar@{.>}@/^/[ur]\ar@{.>}@/_/[ur]    &&\\
                   &&         &&&&&&             &&                      &&                       &&                               && &&&&\\                   
                   &&         &&&&&&             &&                      &&                       &&                               && &&&&\\
S(\Delta):\, \, \, && \bullet \ar@{.>}@/^/[ddl]^{f_2}\ar@{.>}@/_/[ddl]_{e_2}\ar@{.>}@/^/[uul]^{f_1}\ar@{.>}@/_/[uul]_{e_1}  &&&  \bullet \ar@[blue]@/_/[lll]_{{e_1}}\ar@{-->}@[red]@/^/[lll]^{{f_1}}\ar@[blue]_{{e_2}}@/_/[uuu] \ar@{-->}@[red]^{{f_2}}@/^/[uuu]&&&  \bullet \ar@[blue]@/_/[lll]_{{e_2}}\ar@{-->}@[red]@/^/[lll]^{{f_2}}\ar@[blue]_{{e_1}}@/_/[uuu] \ar@{-->}@[red]^{{f_1}}@/^/[uuu] &&& \bullet \ar@[blue]@/_/[lll]_{{e_1}}\ar@{-->}@[red]@/^/[lll]^{{f_1}}\ar@[blue]_{{e_2}}@/_/[uuu] \ar@{-->}@[red]^{{f_2}}@/^/[uuu] &&& \bullet \ar@[blue]@/_/[lll]_{{e_2}}\ar@{-->}@[red]@/^/[lll]^{{f_2}}\ar@[blue]_{{e_1}}@/_/[uuu] \ar@{-->}@[red]^{{f_1}}@/^/[uuu]
&&& \bullet \ar@[blue]@/_/[lll]_{{e_2}}\ar@{-->}@[red]@/^/[lll]^{{f_2}}\ar@[blue]_{{e_1}}@/_/[uuu] \ar@{-->}@[red]^{{f_1}}@/^/[uuu] &&& \ar@{.>}@/_/[lll]_{{e_1}}\ar@{.>}@/^/[lll]^{{f_1}}&&\\
              &&                    &&&&&&  &&                      &&                       &&                               && &&\\
                   & &          &&&&&&      &              &   &  &   &            &    &      &&   &&
}
\]
(the commutative squares of $S(\Delta)$ are determined by $\alpha$). For any vertex $v\in\Delta^{\ob}$ and $n\geq 1$ there is precisely one path $p_{v,n}$ of degree $(n,0)$ ending in $v$. It follows from the irrationality of $e_1e_2e_1e_2e_2...$ that for any distinct vertices $u,v\in \Delta^{\ob}$ there is an $n\geq 1$ such that $\alpha(p_{u,n})\neq \alpha(p_{v,n})$. This implies that $\alpha(\Walk_u(\Delta))\neq \alpha(\Walk_v(\Delta))$ for any $u\neq v\in \Delta^{\ob}$, i.e. $(\Delta,\alpha)$ is irreducible. Hence, by Theorem \ref{thmirr}, the $\KP(\Lambda)$-module $V_{(\Delta,\alpha)}$ is simple.
\end{example}

\begin{example}\label{ex5}
Suppose again that $\Lambda$ is the $2$-graph with skeleton
\begin{equation*}
S(\Lambda):\xymatrix{
 \bullet \ar@[blue]@(ul,ur)^{e_1} \ar@[blue]@(dr,dl)^{e_2} \ar@{-->}@[red]@(dl,ul)^{f_1}\ar@{-->}@[red]@(ur,dr)^{f_2}&
}
\end{equation*}
and commutative squares $\C(\Lambda)=\{(e_if_j,f_ie_j)\mid i,j=1,2\}$.
Let $(\Delta_1,\alpha_1)$ and $(\Delta_2,\alpha_2)$ be the representation $2$-graphs for $\Lambda$ whose skeletons are
\vspace{0.4cm}
\begin{align*}
&\xymatrix@=12pt{
                   &&         &   &        &&&&&&           &    &        &   &                                   &   &     && &&\\
                & &                      &   & &     \bullet \ar@{.>}@/^/[ul]\ar@{.>}@/_/[ul]\ar@{.>}@/^/[ur]\ar@{.>}@/_/[ur]      & & &   \bullet \ar@{.>}@/^/[ul]\ar@{.>}@/_/[ul]\ar@{.>}@/^/[ur]\ar@{.>}@/_/[ur]   & &  &      \bullet \ar@{.>}@/^/[ul]\ar@{.>}@/_/[ul]\ar@{.>}@/^/[ur]\ar@{.>}@/_/[ur]   & & &       \bullet \ar@{.>}@/^/[ul]\ar@{.>}@/_/[ul]\ar@{.>}@/^/[ur]\ar@{.>}@/_/[ur]    &&&  \bullet \ar@{.>}@/^/[ul]\ar@{.>}@/_/[ul]\ar@{.>}@/^/[ur]\ar@{.>}@/_/[ur]    &&\\
                   &&         &&&&&&             &&                      &&                       &&                               && &&&&\\                   
                   &&         &&&&&&             &&                      &&                       &&                               && &&&&\\
S(\Delta_1):\, \, \, && \bullet \ar@{.>}@/^/[ddl]^{f_1}\ar@{.>}@/_/[ddl]_{e_1}\ar@{.>}@/^/[uul]^{f_1}\ar@{.>}@/_/[uul]_{e_1}  &&&  \bullet \ar@[blue]@/_/[lll]_{{e_1}}\ar@{-->}@[red]@/^/[lll]^{{f_1}}\ar@[blue]_{{e_2}}@/_/[uuu] \ar@{-->}@[red]^{{f_2}}@/^/[uuu]&&&  \bullet \ar@[blue]@/_/[lll]_{{e_1}}\ar@{-->}@[red]@/^/[lll]^{{f_1}}\ar@[blue]_{{e_2}}@/_/[uuu] \ar@{-->}@[red]^{{f_2}}@/^/[uuu] &&& \bullet \ar@[blue]@/_/[lll]_{{e_1}}\ar@{-->}@[red]@/^/[lll]^{{f_1}}\ar@[blue]_{{e_2}}@/_/[uuu] \ar@{-->}@[red]^{{f_2}}@/^/[uuu] &&& \bullet \ar@[blue]@/_/[lll]_{{e_1}}\ar@{-->}@[red]@/^/[lll]^{{f_1}}\ar@[blue]_{{e_2}}@/_/[uuu] \ar@{-->}@[red]^{{f_2}}@/^/[uuu]
&&& \bullet \ar@[blue]@/_/[lll]_{{e_1}}\ar@{-->}@[red]@/^/[lll]^{{f_1}}\ar@[blue]_{{e_2}}@/_/[uuu] \ar@{-->}@[red]^{{f_2}}@/^/[uuu] &&& \ar@{.>}@/_/[lll]_{{e_1}}\ar@{.>}@/^/[lll]^{{f_1}}&\hspace{-0.7cm},&\\
              &&                    &&&&&&  &&                      &&                       &&                               && &&\\
                   & &          &&&&&&      &              &   &  &   &            &    &      &&   &&
}
\\&
\xymatrix@=5pt{
&&&&&&\\
&&&&&&\\
&&&&\bullet \ar@{.>}@/^/[uull]\ar@{.>}@/_/[uull]\ar@{.>}@/^/[uurr]\ar@{.>}@/_/[uurr] &&\\
&&&&&&\\
S(\Delta_2):\hspace{5.5cm}&&&&\bullet\ar@[blue]_{{e_2}}@/_/[uu] \ar@{-->}@[red]^{{f_2}}@/^/[uu]\ar@[blue]@(r,dr)^{{e_1}}\ar@{-->}@[red]@(l,dl)_{{f_1}}&&&,\\
&&&&&&\\
&&&&&&
}
\end{align*}
respectively. Clearly $(\Delta_2,\alpha_2)$ is a quotient of $(\Delta_1,\alpha_1)$. One checks easily that $(\Delta_2,\alpha_2)$ is irreducible. Hence, by Theorem \ref{thmirr}, the module $V_{(\Delta_2,\alpha_2)}$ is simple.
\end{example}

\end{document}